\documentclass[lettersize,journal]{IEEEtran}
\usepackage{amsmath,amsfonts}
\usepackage{amsthm}%
\usepackage{enumitem}%
\usepackage{algorithmic}
\usepackage{algorithm}
\usepackage{array}
\usepackage[caption=false,font =normalsize,labelfont=sf,textfont=sf]{subfig}
\usepackage{textcomp}
\usepackage{stfloats}
\usepackage{url}
\usepackage{verbatim}
\usepackage{graphicx}
\usepackage{cite}
\usepackage{framed}
\usepackage{epstopdf}
\usepackage{rotating}
\newtheoremstyle{thmstyleone}
{3pt} {3pt} {\itshape} { } {\bfseries} {.} {.5em} { }

\newtheoremstyle{thmstyletwo}
{3pt} {3pt} {\normalfont} { } {\bfseries} {.} {.5em} { }

\newtheoremstyle{thmstylethree}
{3pt} {3pt} {\normalfont} { } {\slshape} {.} {.5em} { }

\theoremstyle{thmstyleone}%
\newtheorem{theorem}{Theorem}
\newtheorem{proposition}[theorem]{Proposition}%

\theoremstyle{thmstyletwo}%
\newtheorem{assumption}{Assumption}[section]
\newtheorem{lemma}{Lemma}[section]
\theoremstyle{thmstylethree}%
\DeclareMathOperator*{\diag}{Diag}
\begin{document}

\title{ADMM-based Bilevel Descent Aggregation Algorithm for Sparse Hyperparameter Selection}
\author{Yunhai Xiao, Anqi Liu, Peili Li, and Yanyun Ding$^*$
	\thanks{Yunhai Xiao is with the Center for Applied Mathematics of Henan Province, Henan University, Zhengzhou, 450046, P.R. China (email: yhxiao@henu.edu.cn).}
	\thanks{Anqi Liu is with the School of Mathematics and Statistics, Henan University, Kaifeng, 475000, P.R. China (email: liuanqi\_an@163.com).}
	\thanks{Peili Li is with the Center for Applied Mathematics of Henan Province, Henan University, Zhengzhou, 450046, P.R. China, and also with the School of Mathematics and Statistics, Henan University, Kaifeng, 475000, P.R. China (email: lipeili@henu.edu.cn).}
	\thanks{Yanyun Ding is with the Institute of Applied Mathematics, Shenzhen Polytechnic University, Shenzhen, 518055, P.R. China (email: dingyanyun@szpu.edu.cn).}
}

\maketitle

\begin{abstract}
It is widely acknowledged that hyperparameter selection plays a critical role in the effectiveness of sparse optimization problems.
The bilevel optimization provides a robust framework for addressing this issue, but these existing methods depend heavily on the  lower-level singleton (LLS) assumption, which greatly limits their practical applicabilities.
To tackle this technical challenge, this paper focus on a particular type of nonsmooth convex sparse optimization problem and presents a new bilevel optimization framework. This framework effectively integrates the alternating direction method of multipliers (ADMM) with a bilevel descent aggregation (BDA) algorithm.
Specifically, it employs ADMM to efficiently address the lower-level problem and uses BDA to explore the hyperparameter space, thereby integrating both the upper and lower-level problems.
It is important to emphasize that a key contribution of this paper lies in the presentation of a novel convergence analysis. The analysis illustrates that the proposed ADMM-BDA algorithm achieves global convergence under significantly relaxed conditions, thereby departing from the LLS assumption that are often required in the literature.
We conduct a series of numerical experiments utilizing synthetic and real-world data, and do performance comparisions against some state-of-the-art algorithms. The results indicates that ADMM-BDA exhibits superior effectiveness and robustness for solving bilevel programming problems, especially when the lower-level problem is an elastic-net penalized statistics problem.
\end{abstract}

\begin{IEEEkeywords}
Bilevel programming; Hyperparameter selection; Bilevel descent aggregation; Alternating direction method of multipliers; Lower-level singleton.
\end{IEEEkeywords}

\section{Introduction}\label{intro}
\IEEEPARstart{I}{t} is well-known that the sparse optimization plays a pivotal role in fields such as signal processing \cite{CT2005}, statistics \cite{ANW2012}, and machine learning \cite{LHL2020}, etc. It aims to identify solutions to mathematical optimization problems where the resulting solution vector is inherently sparse.
The sparse solution is defined as a vector in which the vast majority of elements are zero, with only a minimal number of non-zero entries.
In sparse optimization problems, let $x^t \in \mathbb{R}^n$ be a true sparse vector and $b \in \mathbb{R}^m$ denote an observed data, where $m \ll n$. The observation data typically follows the form:
$$
b = A x^t + \varepsilon,
$$
where $A \in \mathbb{R}^{m \times n} $ mapping the original vector $x^t$ to the observed data $b\in\mathbb{R}^m$, and $\varepsilon \in \mathbb{R}^m$ represents noise or errors, such as Gaussian or heavy-tailed noise, that are inevitably contained during the measuring process \cite{DZL2023}.
However, all of these pose a challenge when it comes to accurately recovering the sparse vector $x^t $.

The goal of sparse optimization is to find a sparse solution $ \bar{x} $ that approximates $ x^{t} $ using some appropriate sparse optimization models and algorithms. The sparse optimization problem is typically formulated as:
\begin{equation}\label{LL}
	\min_{x\in\mathbb{X}} \left\{ \mathcal{Q}(Ax-b) + \sum_{i=1}^{r} \lambda_i \mathcal{R}_i(x) \right\},
\end{equation}
where $\mathbb{X}\subseteq\mathbb{R}^n$ is a convex compact set, $ \mathcal{Q}(\cdot): \mathbb{R}^m \to \mathbb{R} $ is the loss function measuring the error between the model output and actual observations, and $ \mathcal{R}_i(\cdot): \mathbb{R}^n \to \mathbb{R}, (i = 1, \ldots, r) $ are penalty functions that promote sparsity (e.g., $ \ell_1 $-norm) or stability (e.g., $ \ell_2 $-norm). The  parameter $ \lambda = (\lambda_1, \dots, \lambda_r) \in \mathbb{R}^{r}_+ $ is a pivotal hyperparameter that controls the solution's properties \cite{CF2022, BPG2007}.

When selecting hyperparameters, the traditional methods such as grid-search and random-search are poorly suited for sparse optimization problems due to their undirected approach. The absence of guidance in these methods makes them ineffective at handling non-smooth and sparse structures, which leads to higher computational costs  \cite{OTK2021,GYY2022,KP2013,KBH2008,WL2024}.
Consequently, the shortcomings of these traditional methods make it essential to develop a bilevel optimization framework \cite{LLZ2022}, which characterizes the relationships between the hyperparameters and other   parameters.
In this framework, an upper-level problem is constructed to minimize the validation loss for identifying the optimal hyperparameters, while the lower-level  problem is designed to efficiently solve the sparse model \eqref{LL} and obtain sparse solutions.
As demonstrated in the literature that, this bilevel approach consistently achieves superior accuracy, faster convergence, and better scalability compared to other traditional techniques \cite{MBB2009,BHJ2006}.
Mathematically, the bilevel optimization formulation for selecting hyperparameters is expressed as follows:
\begin{equation}\label{model0}
	\begin{aligned}
		\min_{x \in \mathbb{X}, \lambda \in \Lambda} \enskip & \mathcal{F}(x, \lambda) \\
		\text{s.t.} \enskip & x \in \arg\min_{x \in \mathbb{X}} \left\{ \mathcal{Q}(Ax - b) + \sum_{i=1}^{r} \lambda_i \mathcal{R}_i(x) \right\}.
	\end{aligned}
\end{equation}
where $\mathbb{X}\subseteq \mathbb{R}^n$ and $\Lambda\subseteq \mathbb{R}_+^r$ are convex and compact sets, and $\mathcal{F}$ is a convex outer-objective function (e.g., mean squared error (MSE)) that is always Lipschitz continuous.
The upper-level problem seeks to minimize the validation error with respect to the hyperparameter $\lambda$, whereas the lower-level problem aims to obtain a   solution $x$ for a fixed   $\lambda$.
For $\mathcal{Q}(\cdot)$, selecting it as a least squares function yields a continuously differentiable function, while choosing an $\ell_q$-norm function (with $q = 1, 2, \infty$) results in a non-smooth function.

It is straightforward to see that the inherent hierarchical coupling in \eqref{model0} poses a significant theoretical and computational challenge to the problem of jointly optimizing hyperparameter $\lambda$ and parameter $x$ \cite{KP2013,OTK2021,GYY2022,SMK2024}.
Some existing approaches focus primarily on the singleton lower-level solution case \cite{FFS2018, SCC2019}, where the lower-level problem is strongly convex under some regularity conditions.
This assumption enables the unique solution to be incorporated into the upper-level problem to result in a single-level problem \cite{P2016, RFS2019, FDF2017}.
When the penalty term in the lower-level problem involves an elastic-net \cite{ZH2005} or Lasso \cite{BT2009}, the uniqueness of the lower-level solution is inherently compromised.
By relaxing the assumption of a single lower-level solution, some advanced techniques such as the implicit gradient-based joint optimization \cite{FS2018} and the implicit forward differentiation optimization \cite{BKB2020, BKM2022} are capable of selecting hyperparameters. However, there still exists a notable theoretical gap in the convergence analysis towards some stationary solutions.
This issue has driven significant research into algorithms that guarantee convergence without relying on the assumptions of strong convexity or smoothness in the lower-level problem.

It is important to highlight that the bilevel descent aggregation (BDA) algorithm introduced by Liu et al. \cite{LMY2020} leverages gradient information from both levels simultaneously.
Crucially, the BDA algorithm ensures convergence even without requiring a unique lower-level solution, and offers a theoretically guaranteed approach for solving bilevel optimization problems.
However, the BDA algorithm still requires the lower-level problem to satisfy smoothness conditions in its algorithmic framework.
In a subsequent work, Liu et al. \cite{LLZ2022} proposed a bilevel meta-optimization (BMO) algorithm based on the fixed-point framework, which facilitates the simultaneous optimization of both training variables and hyper-training variables.
However, the BMO fails to fully exploit the separable structure of the lower-level problem in \eqref{model0}.
Therefore, the developments of other algorithms that adequately use the favorable structure of the lower-level problem are highly necessary.

In this paper, motivated by the limitations of the BDA algorithm, we investigate numerical methods for the bilevel optimization \eqref{model0}, where the   lower-level problem  lacks the unique solution assumption and does not satisfy assumptions of smoothness or strong convexity.
Specifically, we apply the alternating direction method of multipliers (ADMM) within the BDA  framework which consists two key innovations: first, it establishes a collaboration between the upper-level hyperparameters and the lower-level sparse solution, and second, it efficiently solves the lower-level problem by using the separable structure and proximal mappings.
In other words, the primary contribution of this paper is the integration of ADMM into the BDA framework, along with  its convergence results that does not rely on the uniqueness of the solution or strong convexity assumptions for the lower-level problem.
Theoretically, we prove that any limit point of the sequence generated by the algorithm is a solution to \eqref{model0}, where the lower-level variable satisfies optimal condition, and the upper-level variable minimizes the upper-level outer-objective function.
Additionally, we prove that as the iterations increases, the optimal value of the upper-level problem converges to the optimum value of \eqref{model0}.
In our experiments, we   perform comprehensive numerical tests where the lower-level problem incorporates an elastic-net or a generalized-elastic-net penalty to demonstrate the effectiveness of our proposed algorithm in solving bilevel optimization problem.

The remaining part of this paper is organized as follows: Section \ref{preli} reviews some preliminary results used in the subsequent developments.
Section \ref{sec2} develops the full steps of the ADMM-based BDA algorithm (abbr. ADMM-BDA).
Section \ref{sec3} presents a convergence analysis of ADMM-BDA.
Section \ref{num} conducts some numerical experiments on elastic-net and generalized-elastic-net peanlized lower-level problem, and do performance comparisons with some classical hyperparameter selection methods.
Section \ref{con} concludes the paper with a summary.
\section{Preliminaries}\label{preli}

In this section, we quickly review some preliminary results used in the subsequent developments.
Let $\mathbb{R}^n$ be an $n$-dimensional Euclidean space equipped with an inner product $\langle \cdot, \cdot \rangle$ and an induced norm $\|\cdot\|_2$.
The inner product induced by a self-adjoint semi-positive linear operator $\mathcal{S}$ is defined as
$\langle x, y \rangle_{\mathcal{S}} := \langle \mathcal{S}x, y \rangle$,
and the corresponding norm is given by
$$\|x\|^2_{\mathcal{S}} := \langle x, x \rangle_{\mathcal{S}} = x^\top \mathcal{S} x.$$
Let $\mathbb{C}$ be a closed convex set. The distance from a point $x$ to the set $\mathbb{C}$ with respect to the $\mathcal{S}$-norm, denoted as $\mathrm{dist}_{\mathcal{S}}(x, \mathbb{C})$, is defined as
$$\mathrm{dist}_{\mathcal{S}}(x, \mathbb{C}) = \inf_{\bar{x} \in \mathbb{C}} \big\{\|\bar{x} - x\|_{\mathcal{S}}\big\},$$
which represents the infimum of the distances from $x$ to all points in $\mathbb{C}$.

Let $f: \mathbb{R}^n \to (-\infty, +\infty]$ be a closed, proper, and convex function.
A vector $x^*$ is said to be a subgradient of $f$ at point $x$ if $f(z)\geq f(x)+\langle x^*,z-x\rangle$ for all $z\in\mathbb{R}^n$. The set of all subgradients of $f$ at $x$ is called the subdifferential of $f$ at $x$ and is denoted by $\partial f(x)$. Obviously, $\partial f(x)$ is a closed convex set while it is not empty.
The Fenchel conjugate of $f$ at $x$ is defined as $f^{*}(x):=\sup _{y \in \mathbb{R}^{n}}\{\langle x, y\rangle-f(y)\}$. The Moreau envelope function and the proximal mapping of $f$ with a parameter $t > 0$ are defined, respectively, as follows
$$
\begin{aligned}
	&\Phi_{t f}(x) :=\min _{y \in \mathbb{R}^{n}}\Big\{tf(y)+\frac{1}{2}\|y-x\|_2^{2}\Big\},\\
	&\operatorname{Prox}_{tf}(x) :=\underset{y \in \mathbb{R}^{n}}{\operatorname{argmin}}\Big\{tf(y)+\frac{1}{2}\|y-x\|_2^{2}\Big\}.
\end{aligned}
$$
Let $\Pi^{\mathcal{S}}_{\mathbb{C}}(\cdot)$ be the projection operator onto the set $\mathbb{C}$ associated with linear operator $\mathcal{S}$, defined by
$$
\Pi^{\mathcal{S}}_{\mathbb{C}}(x) = \arg \min_{\bar{x} \in \mathbb{C}} \big\{\| \bar{x} - x \|^2_{\mathcal{S}}\big\}.
$$
When $\mathcal{S}$ is an identity matrix, it reduces to the traditional projection operator $\Pi_{\mathbb{C}}(\cdot)$.
In convex optimization literature, it is known that that, for the $\ell_q$-norm function with $q=1$, $2$, and $\infty$, its proximal mappings enjoys closed-form expressions, see \cite{R2015,LST2022}.
Let $p>0$ and $q >0$ such that $1/p+1/q=1$.
Let $f(x):=t\|x\|_q$ with a positive scalar $t$, then it is trivial to deduce that $f^{*}(x)=\delta_{\mathbb{B}^{(t)}_{p}}(x)$ where ${\mathbb{B}^{(t)}_{p}}:=\{x \ | \ \|x\|_{p}\leq t\}$ and $\delta_\mathbb{C}(x)$ is an indicator function such that $\delta_\mathbb{C}(x)=0$ if $x\in \mathbb{C}$ and $+\infty$ otherwise.
When $f=\|x\|_1$, it is easy to deduce that
$$
\operatorname{Prox}_{tf}(x)=x-\Pi_{\mathbb{B}^{(t)}_{\infty}}(x), \enskip
\Pi_{\mathbb{B}^{(t)}_{\infty}}(x)=\max\{-t,\min\{x,t\}\}.
$$
When $f(x):=\|x\|_2$,  it gets that $f^{*}(x)=\delta_{\mathbb{B}^{(t)}_{2}}(x)$  and that
$$
\operatorname{Prox}_{tf}(x)=x-\Pi_{\mathbb{B}^{(t)}_{2}}(x),\enskip
\Pi_{\mathbb{B}^{(t)}_{2}}(x)=\min\{t,\|x\|_2\}\frac{x}{\|x\|_2}.
$$
When $f(x):=\|x\|_{\infty}$, it has that $f^{*}(x)=\delta_{\mathbb{B}^{(t)}_{1}}(x)$ and that
$$
\begin{aligned}
	&\operatorname{Prox}_{tf}(x)=x-\Pi_{\mathbb{B}^{(t)}_{1}}(x),\\
	&\Pi_{\mathbb{B}^{(t)}_{1}}(x)=\left\{\begin{array}{ll}
		x^*, & \text { if }  \|x\|_1\leq t,\\
		\mu P_{x} \Pi_{\Delta_{n}}\left(P_{x} x /t\right), & \text { if }  \|x\|_1>t,
	\end{array}\right.
\end{aligned}
$$
where $P_{x}:=\diag(\operatorname{Sign}(x))$ and $\Pi_{\Delta_{n}}(\cdot)$ denotes the projection onto the simplex $\Delta_{n}:=\{x \in \mathbb{R}^{n} \mid e_{n}^{\top} x=1, x \geq 0\}$, in which $\diag(\cdot)$ denotes a diagonal matrix with elements of a vector on its diagonal positions.

\section{ADMM-BDA for solving \eqref{model0}}\label{sec2}
In the following text, for clarity, we use superscript `$k$' to denote the outer iteration and superscript `(j)' to denote the inner iteration.
Assuming that at the $k$-th iteration, the hyperparameter $\lambda^k\in \Lambda$ is given.
The ADMM-BDA algorithm discussed below consists of two steps: first, an ADMM step, followed by a BDA step.

At the first step, we focus on the convex and nonsmooth lower-level problem  in \eqref{model0}.
For convenience, we use an auxiliary variable $y:=Ax-b$, then  the lower-level problem in \eqref{model0} is rewritten as
\begin{equation}\label{lmodel}
	\begin{array}{cl}
		\min\limits_{x\in\mathbb{R}^n, y\in\mathbb{R}^m}&\quad   \mathcal{Q}(y)+\sum^r_{i=1}\lambda_i^k\mathcal{R}_i(x) \\[3mm]
		\text{s.t.} &\quad  Ax-y=b.
	\end{array}
\end{equation}
Let $\sigma>0$ be a penalty parameter, the augmented Lagrangian function associated with problem   \eqref{lmodel}  is given by
$$
\begin{aligned}
	\mathcal{L}_{\sigma}(x,y; z):= &\mathcal{Q}(y)+\sum^r_{i=1}\lambda_i^k\mathcal{R}_i(x)\\
	&+\langle z,Ax-y-b\rangle+\frac{\sigma}{2}\|Ax-y-b\|^2_{2},
\end{aligned}
$$
where $z\in\mathbb{R}^{m}$ is  multiplier associated with the constraint.
We employ ADMM to iteratively minimize the augmented Lagrangian function with respect to the disjoint variables $x$ and $y$ in a Gauss-Seidel manner, progressing in the sequence $y \to z \to x$.
Starting from $(x_l^{(0)}, y_l^{(0)}, z_l^{(0)})$, where the subscript `$l$' denotes that the variables are associated with the lower-level problem.
Given $(x^{(j)}, y_l^{(j)}, z_l^{(j)})$, the   ADMM   generates the next point $(x_l^{(j+1)}, y_l^{(j+1)}, z_l^{(j+1)})$ based on the following framework:
\begin{align}
	{y}_l^{(j+1)}&=\arg\min_{y\in\mathbb{R}^m}\Big\{\mathcal{Q}(y)+\frac{\sigma}{2}
	\|Ax^{(j)}-y-b+{z}_l^{(j)}/\sigma\|_2^2\notag\\[2mm]
	&\quad+\frac{\sigma}{2}\|y-{y}_l^{(j)}\|_{\mathcal{S}_y}^2\Big\},\label{suby0}\\[2mm]
	{z}_l^{(j+1)}&={z}_l^{(j)}+\sigma\Big(Ax^{(j)}-{y}_l^{(j+1)}-b\Big),\label{subz}\\[2mm]
	{x}_l^{(j+1)}&=\arg\min_{x\in\mathbb{R}^n}\Big\{\sum^r_{i=1}\lambda_i^k\mathcal{R}_i(x)+\frac{\sigma}{2}
	\|Ax-{y}_l^{(j+1)}-b\notag\\[2mm]
	&\quad+{z}_l^{(j+1)}/\sigma\|_2^2+\frac{\sigma}{2}\|x-x^{(j)}\|_{\mathcal{S}_x}^2\Big\},\label{subx0}
\end{align}
where $\mathcal{S}_x$ and $\mathcal{S}_y$ are self-adjoint semi-positive linear operators.
It is important to note that the notation $x^{(j)} $ is used here instead of $x_l^{(j)} $, as it is also associated with the upper-level problem described below.

Choose $\mathcal{S}_y:=\eta \mathcal{I}$ with $\eta\geq 0$, and choose $\mathcal{S}_x:=\zeta \mathcal{I}-A^{\top}A$ with $\zeta>0$ be a positive scalar such that $\mathcal{S}_x$ be positive definite. Denote $\mathcal{R}(\cdot,\lambda):=\sum^r_{i=1}\lambda_i\mathcal{R}_i(\cdot)$.
It is a trivial task to deduce that the $y$- and $x$-subproblems can be written as the following proximal mapping forms:
\begin{equation}\label{suby}
	{y}_l^{(j+1)}=\operatorname{Prox}_{({\sigma}+\sigma\eta)^{-1}\mathcal{Q}(\cdot)}\bigg(\frac{\sigma(Ax^{(j)}-b)+{z}_l^{(j)}+\sigma\eta {y}_l^{(j)}}{\sigma(1+\eta)}\bigg),
\end{equation}
and
\begin{equation}\label{subx}
	\begin{aligned}
		&{x}_l^{(j+1)}=\operatorname{Prox}_{(\zeta\sigma)^{-1}\mathcal{R}(\cdot,\lambda^k)}\bigg(\frac{A^\top({y}_l^{(j+1)}+b-{z}_l^{(j+1)}/\sigma)}{\zeta}\\
		&\qquad\qquad+\frac{(\zeta I-A^\top A)x^{(j)}}{\zeta}\bigg),
	\end{aligned}
\end{equation}
which implies that the $x$- and $y$-subproblems are easily implemented when $\mathcal{Q}(\cdot)$ and $\mathcal{R}(\cdot)$ are both simple functions, such as the norm functions $\ell_q$-norm with $q=1,2,\infty$, or certain quadratic functions.

At the second step, we apply the BDA algorithmic framework to the temporary point $x_l^{(j+1)}$generated by ADMM for the lower-level problem.
Note that the upper-level problem $\mathcal{F}(x, \lambda)$ is assumed to be convex and Lipschitz continuous.
Given $x^{(j)}$, as noted by Liu et al. \cite{LMY2020}, a gradient-based point can be computed with an appropriately chosen step size as follows:
\begin{equation}\label{xj1u}
	x_{u}^{(j+1)}=x^{(j)}-s_j \mathcal{S}^{-1}\nabla_x \mathcal{F}(x^{(j)},\lambda^k),
\end{equation}
where $s_j=s/{(j+1)}$ with a positive scalar $s$, and $\mathcal{S}:=\sigma(A^\top A+\mathcal{S}_x)$ is a positive definite operator.
Using $x_l^{(j+1)}$ and $x_u^{(j+1)}$, the updated point $x^{(j+1)}$ is then computed as follows:
\begin{equation}\label{xj1}
	x^{(j+1)} = \Pi^{\mathcal{S}}_{\mathbb{X}}\left(\mu x_u^{(j+1)} + (1-\mu) x_l^{(j+1)}\right),
\end{equation}
where $\mu\in(0,1)$ and $\Pi^{\mathcal{S}}_{\mathbb{X}}(\cdot)$ denotes the projection operator onto the set $\mathbb{X}$ with operator $\mathcal{S}$.
This formula shows that $x^{(j+1)}$ is a convex combination of the point $x_l^{(j+1)}$ from the lower-level problem and the point $x_u^{(j+1)}$ from the upper-level problem, which conducts the connections of the bilevel problems.

After the iteration is complete, the hyperparameter $\lambda^k $ is updated accordingly before moving on to the next iteration.
Based on the previous analysis, we can summarize the ADMM-BDA for the bilevel programming problem \eqref{model0} as follows:
\begin{framed}
	\noindent
	{\bf Algorithm: ADMM based BDA for \eqref{model0} (ADMM-BDA)}
	\vskip 1.0mm \hrule \vskip 1mm
	\noindent
	\begin{itemize}[leftmargin=10mm]
		\item[Step $0$.] Let $\mathcal{S}_y:=\eta \mathcal{I}$ and $\mathcal{S}_x:=\zeta \mathcal{I}-A^{\top}A$ where $\eta\geq 0$ and $\zeta>0$ be positive scalars such that $\mathcal{S}_y$ and $\mathcal{S}_x$ be positive definite. Let $\mathcal{S}:=\sigma (A^{\top}A +\mathcal{S}_x)$. Input positive constants $\sigma$, $\alpha$, $s$,  $\mu\in(0,1)$, and a positive integer sequence $\{J_k\}$.  Initialize $\lambda^0$ and let $k:=0$.
		\item[Step $1$.] Given $\lambda^k$, output $x^{(J_k)}$ by the following steps. Initialize $({y}_l^{(0)}, {z}_l^{(0)},x^{(0)}_l)$, for $j=1,...,J_k$, do the following operations iteratively:
		\begin{itemize}[leftmargin=10mm]
			\item [Step $1.1$.] Compute ${y}_l^{(j+1)}$, ${z}_l^{(j+1)}$, and  ${x}_l^{(j+1)}$  according to \eqref{suby}, \eqref{subz}, and \eqref{subx}, respectively;
			\item [Step $1.2$.] Compute $x_{u}^{(j+1)}$ according to \eqref{xj1u},
			where $\{s_j\}$ is a step-size with updating rule $s_j={s}/{(j+1)}$;
			\item [Step $1.3$.] Compute $x^{(j+1)}$ according to \eqref{xj1}.
		\end{itemize}
		\item[Step $2$.] Let $x^{k+1}:=x^{(J_k)}$ and compute
		$$
		\lambda^{k+1} \in \arg\min_{\lambda \in \Lambda} \mathcal{F}\Big(x^{k+1},\lambda\Big).
		$$
		\item[Step $3$.] Let  $k:=k+1$, go to Step $1$.
	\end{itemize}
	
\end{framed}

It is important to note that the $\lambda$-updating step can be quite restrictive to implement in practical applications. Specifically, we indicate that if the outer objective function $\mathcal{F}(x,\cdot)$ is a quadratic function with a fixed $x$, it can be solved exactly. However, in other cases, it may be more effective to take a gradient step, such as
$$
\lambda^{k+1}=\Pi_{\Lambda}^{\mathcal{S}}\Big(\lambda^{k}-\alpha\nabla_\lambda\mathcal{F}(x^{k+1},\lambda^k)\Big),
$$
which has demonstrated good performance, as evidenced by our numerical experiments.
 \section{Convergence analysis}\label{sec3}
 In this section, we derive the convergence result for the sequence $\{x^k, \lambda^k\}$ generated by the ADMM-BDA algorithm. The proof approach is inspired by Liu et al. \cite{LLZ2022}, but unlike their approach for lower-level problem, we utilize ADMM to fully exploit the separable structure of problem \eqref{model0} to produce an approximated solution.
 
 For clarity, we restate the bilevel optimization problem \eqref{model0} below, where the lower-level problem is assumed to be both nonsmooth and non-strongly convex:
 
 \begin{equation}\label{model1}
 	\begin{aligned}
 		\min_{x\in\mathbb{X}, \lambda\in \Lambda} \quad & \mathcal{F}(x,\lambda)\\
 		\text{s.t.} \quad& x \in \mathcal{V}(\lambda),
 	\end{aligned}
 \end{equation}
 where
 $$
 \mathcal{V}(\lambda):=\arg\min_{x\in \mathbb{R}^n}\Big\{\mathcal{Q}(Ax-b)+\sum^r_{i=1}\lambda_i\mathcal{R}_i(x)\Big\}.
 $$
 To facilitate the theoretical analysis, we introduce an auxiliary function $\varphi(\lambda)$ with
 $$
 \varphi(\lambda):=\inf_{x\in \mathcal{V}(\lambda)} \{\mathcal{F}(x,\lambda)\}.
 $$
 In fact, the execution of ADMM-BDA is equivalent to solving the following problem:
 \begin{equation}\label{model2}
 	\min_{\lambda\in \Lambda}\ \ \Big\{\varphi_{J}(\lambda):=\mathcal{F}\Big(x^{(J)}({\lambda}),\lambda\Big)\Big\},
 \end{equation}
 where $x^{(J)}({\lambda})$ is  generated by Step $1$ of ADMM-BDA with a fixed hyperparameter $\lambda$ and a fixed number of steps $J$.
 Moreover, from Step $1.3$, we see that the point $x^{(j+1)}$ is also parameterized by $\lambda$, specifically:
 \begin{equation}\label{xlambda}
 	x^{(j+1)}(\lambda) = \Pi^{\mathcal{S}}_{\mathbb{X}}\left( \mu x_u^{(j+1)}(\lambda) + (1-\mu) x_l^{(j+1)}(\lambda) \right),
 \end{equation}
 where both $x_u^{(j+1)}$ and $x_l^{(j+1)}$ are all parameterized by $\lambda$.
 
 Based on the auxiliary function $\varphi(\lambda)$, we next demonstrate that any limit point $(\bar{x}, \bar{\lambda})$ of the sequence $\left\{\big(x^{(J_k)}, \lambda^{k}\big)\right\}$, where {$\lambda^{k} \in \arg\min_{\lambda \in \Lambda} \{\varphi_{J_k}(\lambda)\}$}, generated by  ADMM-BDA, is a solution to the bilevel problem \eqref{model1}. In other words, we can obtain the solution to \eqref{model1} by solving the problem \eqref{model2}.
 For this purpose, we need to make the following basic assumptions.
 \begin{assumption}\label{ass2}
 	Assume that both $\mathbb{X}$ and $\Lambda$ are convex and compact sets. Suppose that the solution set of the lower-level problem $\mathcal{V}(\lambda)$ is nonempty.  The functions $\mathcal{Q}(\cdot)$ and $\mathcal{R}_i(\cdot)$ in the lower-level problem are proper, closed, convex functions.
 \end{assumption}
 
 \begin{assumption}\label{ass1}
 	Assume that the upper-level out-objective function $\mathcal{F}(x,\lambda)$ is continuous on $\mathbb{X}\times \Lambda$. For any $\lambda\in\Lambda$, $\mathbb{F}(\cdot,\lambda): \mathbb{X}\rightarrow \mathbb{R}$ is $L_{\delta}$-smooth, which means that the gradient of $\mathcal{F}(\cdot,\lambda)$, $\nabla_{x}\mathcal{F}$ is Lipschitz continuous with $L_\delta$ as its Lipschitz constant. Meanwhile, assume that the function $\mathcal{F}(\cdot,\lambda)$ is convex and has $F_0$ as its lower bound.
 \end{assumption}
 
 It should be noted that the above assumptions are easily satisfied. An example is the MSE out-objective function, which is convex with respect to its input and has a lower bound of zero.  These assumptions are also commonly used in some existing approaches, see e.g., \cite{ZLZ2021,GYY2022}. Next, we will utilize a lemma to prove the non-expansive property of the iterative procedure \eqref{suby0}-\eqref{subx0} for any $\lambda\in \Lambda$.
 
 \begin{lemma}\label{lemma1}
 	Suppose Assumption \ref{ass2} holds. Given $\lambda^k\in \Lambda$,  define $\omega_l^{j}:=({y}_l^{(j)},{z}_l^{(j)},x_l^{(j)})$ and $\omega^{j}=({y}_l^{(j)},{z}_l^{(j)},x^{(j)})$. For all $j\geq 0$, and the $y$-, $z$-, and $x$-subproblems associated with \eqref{suby0}-\eqref{subx0}, we get that:\\
 	\begin{enumerate}
 		\item[(a)]  The iterative format \eqref{suby0}-\eqref{subx0}
 		can be expressed in the following compact form:
 		$$\omega_l^{j+1}=\mathcal{\tilde{T}}(\omega^{j}),$$
 		where $\mathcal{\tilde{T}}=(\mathcal{\tilde{D}}+\mathcal{\tilde{S}})^{-1} \mathcal{\tilde{S}}$ with
 		\begin{align*}
 			\mathcal{\tilde{D}} &:=\left(\begin{array}{c}
 				\partial_y \mathcal{Q}(y)-z \\
 				b-Ax+y\\
 				\partial_x \mathcal{R}(x,\lambda)+A^{\top}z
 			\end{array}\right) \\
 			\mathcal{\tilde{S}} &:=\left(\begin{array}{ccc}
 				\sigma\mathcal{S}_y & 0 & 0 \\
 				0 & \sigma^{-1} \mathcal{I} & A\\
 				0 & A^{\top} & \sigma (A^{\top}A +\mathcal{S}_x)
 			\end{array}\right).
 		\end{align*}

 		\item[(b)]  The operator $\mathcal{\tilde{T}}$ is firmly non-expansive with respect to the norm $\|\cdot\|_{\mathcal{\tilde{S}}}$, i.e., for any $(u,v)\in \text{dom}\ \mathcal{\tilde{T}}\times\text{dom}\ \mathcal{\tilde{T}}$, it holds that
 		$$
 		\|\mathcal{\tilde{T}}(u)-\mathcal{\tilde{T}}(v)\|^2_{\mathcal{\tilde{S}}}\leq\big\langle u-v, \mathcal{\tilde{T}}(u)-\mathcal{\tilde{T}}(v)\big\rangle_{\mathcal{\tilde{S}}}.
 		$$
 		\item[(c)]  The operator $\mathcal{\tilde{T}}$ is closed, i.e., its graph $\text{gph}\ (\mathcal{\tilde{T}})$ is closed.
 	\end{enumerate}
 \end{lemma}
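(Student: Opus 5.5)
The plan has three parts. First, write down the optimality conditions of the three subproblems. Second, recognize them as the inclusion $0\in\tilde{\mathcal D}(\omega_l^{j+1})+\tilde{\mathcal S}(\omega_l^{j+1}-\omega^j)$, which gives (a). Third, obtain (b) and (c) from the maximal monotonicity of $\tilde{\mathcal D}$ together with positive definiteness of $\tilde{\mathcal S}$.

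For (a), use the optimality condition of \eqref{suby0}:
\[
0\in\partial\mathcal Q(y_l^{(j+1)})-z_l^{(j)}-\sigma(Ax^{(j)}-y_l^{(j+1)}-b)+\sigma\mathcal S_y(y_l^{(j+1)}-y_l^{(j)}).
\]
By \eqref{subz}, the middle two terms equal $-z_l^{(j+1)}$. This gives the first row: $0\in\partial\mathcal Q(y^{j+1})-z^{j+1}+\sigma\mathcal S_y(y^{j+1}-y^j)$.

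Next, rewrite \eqref{subz} as
\[
0=b-Ax_l^{(j+1)}+y_l^{(j+1)}+\sigma^{-1}(z_l^{(j+1)}-z_l^{(j)})+A(x_l^{(j+1)}-x^{(j)}).
\]
This is exactly the second row of $\tilde{\mathcal D}+\tilde{\mathcal S}(\cdot-\omega^j)$; the term $A(x-x^{(j)})$ is what produces the off-diagonal block $A$.

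Then use the optimality condition of \eqref{subx0}:
\[
0\in\partial_x\mathcal R(x_l^{(j+1)},\lambda)+\sigma A^\top(Ax_l^{(j+1)}-y_l^{(j+1)}-b)+A^\top z_l^{(j+1)}+\sigma\mathcal S_x(x_l^{(j+1)}-x^{(j)}).
\]
Substitute $\sigma(Ax^{(j)}-y^{(j+1)}-b)=z^{(j+1)}-z^{(j)}$ and add and subtract $\sigma A^\top A x^{(j)}$. The result is
\[
\partial_x\mathcal R+A^\top z^{j+1}+A^\top(z^{j+1}-z^j)+\sigma(A^\top A+\mathcal S_x)(x^{j+1}-x^j),
\]
which is the third row. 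Hence $\tilde{\mathcal S}\omega^j\in(\tilde{\mathcal D}+\tilde{\mathcal S})\omega_l^{j+1}$, i.e. $\omega_l^{j+1}=\tilde{\mathcal T}(\omega^j)$. Single-valuedness of $(\tilde{\mathcal D}+\tilde{\mathcal S})^{-1}$ follows once $\tilde{\mathcal S}\succ0$ and $\tilde{\mathcal D}$ is monotone, and the subproblems \eqref{suby},\eqref{subx} guarantee existence of the point.

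For (b), the key facts are the following. $\tilde{\mathcal D}$ is the sum of $\partial\mathcal Q\times\{0\}\times\partial\mathcal R(\cdot,\lambda)$, which is maximal monotone as the subdifferential of a proper closed convex function, and the linear map $(y,z,x)\mapsto(-z,\,y-Ax,\,A^\top z)$ plus a constant. That linear map is skew-symmetric, so it is monotone. The sum is maximal monotone because the linear part is everywhere defined and continuous.

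Next, check that $\tilde{\mathcal S}\succ0$ by a Schur complement. Since $\mathcal S_x=\zeta I-A^\top A$, the lower-right block is $\sigma\zeta I$. The $(z,x)$ block is then positive definite iff $\sigma^{-1}I-\sigma^{-1}\zeta^{-1}AA^\top\succ0$, i.e. $\zeta>\|A\|^2$, which is the condition that $\mathcal S_x$ be positive definite. One also needs $\eta>0$ for the $y$ block, as Step 0 requires; this positivity check is the main technical obstacle, and if $\eta=0$ one would have to work with a seminorm instead.

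Given this, let $u'=\tilde{\mathcal T}u$ and $v'=\tilde{\mathcal T}v$. Then $\tilde{\mathcal S}(u-u')\in\tilde{\mathcal D}u'$ and $\tilde{\mathcal S}(v-v')\in\tilde{\mathcal D}v'$, and monotonicity gives $\langle (u-u')-(v-v'),u'-v'\rangle_{\tilde{\mathcal S}}\ge0$. This is exactly the firm nonexpansiveness inequality in (b).

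For (c), firm nonexpansiveness implies $\|\tilde{\mathcal T}u-\tilde{\mathcal T}v\|_{\tilde{\mathcal S}}\le\|u-v\|_{\tilde{\mathcal S}}$ by Cauchy–Schwarz. So $\tilde{\mathcal T}$ is Lipschitz continuous on its domain, which is all of the space since the subproblems are always solvable. A continuous single-valued map defined everywhere has a closed graph. Alternatively, one can argue directly: if $u_k\to u$ and $\tilde{\mathcal T}u_k\to w$, then $\tilde{\mathcal S}(u_k-\tilde{\mathcal T}u_k)\in\tilde{\mathcal D}(\tilde{\mathcal T}u_k)$, and closedness of the graph of the maximal monotone $\tilde{\mathcal D}$ gives $w=\tilde{\mathcal T}u$.
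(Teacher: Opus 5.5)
Your proposal is correct and follows the paper's proof essentially step for step. Part (a) rewrites the optimality conditions of the three subproblems as $0\in\tilde{\mathcal D}(\omega_l^{j+1})+\tilde{\mathcal S}(\omega_l^{j+1}-\omega^j)$, part (b) obtains firm nonexpansiveness from the monotonicity of $\tilde{\mathcal D}$, and part (c) uses the closed graph of the maximal monotone $\tilde{\mathcal D}$; your Lipschitz-continuity route for (c) is an equally valid alternative. Your explicit justifications are more careful than the paper's: you split $\tilde{\mathcal D}$ into a subdifferential plus a skew-symmetric linear map, and your Schur-complement check shows that $\tilde{\mathcal S}\succ 0$ requires $\eta>0$ and $\zeta>\|A\|^2$, whereas the paper asserts positive definiteness from mere semidefiniteness of $\mathcal S_x$ and $\mathcal S_y$.
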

 \begin{proof}
 	(a) According to the first-order optimality conditions,  the \eqref{suby0}-\eqref{subx0} subproblems can be equivalently reformulated as:
 	$$
 	\left\{
 	\begin{array}{cl}
 		0&\in \partial_y\mathcal{Q}({y}_l^{(j+1)})-\sigma
 		\big(Ax^{(j)}-{y}_l^{(j+1)}-b+{z}_l^{(j)}/\sigma\big)\\[2mm]
 		&\quad+\sigma\mathcal{S}_y({y}_l^{(j+1)}-{y}_l^{(j)}),\\[2mm]
 		{z}_l^{(j+1)}&={z}_l^{(j)}+\sigma\big(Ax^{(j)}-{y}_l^{(j+1)}-b\big),\\[2mm]
 		0&\in\partial_x \mathcal{R}({x}_l^{(j+1)},\lambda^k)\\[2mm]
 		&\quad+\sigma A^{\top}
 		\big(A{x}_l^{(j+1)}-{y}_l^{(j+1)}-b+{z}_l^{(j+1)}/\sigma\big)\\[2mm]
 		&\quad+\sigma\mathcal{S}_x({x}_l^{(j+1)}-x^{(j)}),
 	\end{array}
 	\right.
 	$$
 	or, equivalently:
 	$$
 	\left\{
 	\begin{array}{rl}
 		0&\in \partial_y\mathcal{Q}({y}_l^{(j+1)})-{z}_l^{(j+1)}+\sigma\mathcal{S}_y({y}_l^{(j+1)}-{y}_l^{(j)}),\\[2mm]
 		0&=\big(b-A{x}^{(j+1)}+{y}_l^{(j+1)}\big)+A({x}_l^{(j+1)}-x^{(j)})\\[2mm]
 		&\quad+\sigma^{-1}({z}_l^{(j+1)}-{z}_l^{(j)}),\\[2mm]
 		0&\in\partial_x \mathcal{R}({x}_l^{(j+1)},\lambda^k)+A^{\top}{z}_l^{(j+1)}\\[2mm]
 		&\quad+\sigma(A^{\top}A+\mathcal{S}_x)({x}_l^{(j+1)}-x^{(j)})+A^{\top}({z}_l^{(j+1)}-{z}_l^{(j)}).
 	\end{array}
 	\right.
 	$$
 	This implies that
 	$$0\in \mathcal{\tilde{D}}(\omega_l^{j+1})+\mathcal{\tilde{S}}(\omega_l^{j+1}-\omega^{j}),$$
 	or,
 	$$
 	\omega_l^{j+1}=\mathcal{\tilde{T}}(\omega^{j}),
 	$$
 	by using the definition of $\mathcal{\tilde{T}}$.
 	
 	(b) Let $a=\mathcal{\tilde{T}}(u)$ and $b=\mathcal{\tilde{T}}(v)$. From the expression of $\mathcal{\tilde{T}}=(\mathcal{\tilde{D}}+\mathcal{\tilde{S}})^{-1} \mathcal{\tilde{S}}$, we can get that
 	$$
 	\mathcal{\tilde{S}}u=(\mathcal{\tilde{D}}+\mathcal{\tilde{S}})a, \quad \text{and} \quad \mathcal{\tilde{S}}v=(\mathcal{\tilde{D}}+\mathcal{\tilde{S}})b.
 	$$
 	By subtracting both sides of the above two equations, we get
 	$$
 	\mathcal{\tilde{S}}u-\mathcal{\tilde{S}}v=\mathcal{\tilde{D}}(a)-\mathcal{\tilde{D}}(b)+\mathcal{\tilde{S}}(a-b).
 	$$
 	Taking the inner product of both sides with $a - b$, we get
 	{\small\begin{equation}\label{lemmaproof1}
 		\big\langle a-b, \mathcal{\tilde{S}}u-\mathcal{\tilde{S}}v\big\rangle=\big\langle a-b, \mathcal{\tilde{D}}(a)-\mathcal{\tilde{D}}(b)\big\rangle+\big\langle a-b, \mathcal{\tilde{S}}(a-b)\big\rangle.
 	\end{equation}}
 	Based on Assumption \ref{ass2}, which assumes that $\mathcal{Q}(\cdot)$ and $\mathcal{R}(\cdot)$ are closed and convex, it can be concluded that the operator $\mathcal{\tilde{D}}$ is maximal monotone \cite{R2015}, and additionally, that
 	$$
 	\big\langle a-b, \mathcal{\tilde{D}}(a)-\mathcal{\tilde{D}}(b)\big\rangle\geq0.
 	$$
 	Therefore, from \eqref{lemmaproof1} , we can obtain that
 	$$
 	\big\langle a-b, \mathcal{\tilde{S}}u-\mathcal{\tilde{S}}v\big\rangle\geq\big\langle a-b, \mathcal{\tilde{S}}(a-b)\big\rangle.
 	$$
 	Since $\mathcal{S}_x$ and $\mathcal{S}_y$ are symmetric semi-positive definite and $\sigma>0$, the positive definiteness of $\mathcal{\tilde{S}}$ can be verified by computing its quadratic form. Then,
 	we can get $\tilde{\mathcal{T}}$ is firmly non-expansive with respect to $\|\cdot\|_{\mathcal{\tilde{S}}}$, that is,
 	$$
 	\|\mathcal{\tilde{T}}(u)-\mathcal{\tilde{T}}(v)\|^2_{\mathcal{\tilde{S}}}\leq\big\langle u-v, \mathcal{\tilde{T}}(u)-\mathcal{\tilde{T}}(v)\big\rangle_{\mathcal{\tilde{S}}},
 	$$
 	which implies that the claim (b) is true.
 	
 	(c) Suppose we have a sequence $(\omega^j, a^j)\in \text{gph} \ (\mathcal{\tilde{T}})$ such that
 	$(\omega^j, a^j)\rightarrow (\omega, a)$. We need to show that $(\omega, a)\in \text{gph} \ (\mathcal{\tilde{T}})$, i.e., $a=\mathcal{\tilde{T}}(\omega)$.
 	Since $(\omega^j, a^j) \in \text{gph} \ (\mathcal{\tilde{T}})$, we have:
 	$
 	\mathcal{\tilde{S}}(\omega^j - a^j) \in \mathcal{\tilde{D}}(a^j).$
 	As $j \to \infty$, $\omega^j \to \omega$ and $a^j \to a$. Since $\mathcal{\tilde{S}}$ is a linear operator, it follows from the continuity of $\mathcal{\tilde{S}}$ that,
 	$
 	\mathcal{\tilde{S}}(\omega^j - a^j) \to \mathcal{\tilde{S}}(\omega - a).
 	$
 	Also, since $\mathcal{\tilde{D}}$ is maximal monotone, it is closed. Therefore, if $a^j \to a$ and $\mathcal{\tilde{S}}(\omega^j - a^j) \to \mathcal{\tilde{S}}(\omega - a)$ with $\mathcal{\tilde{S}}(\omega^j - a^j) \in \mathcal{\tilde{D}}(a_j)$, then by closedness of $\mathcal{\tilde{D}}$, it gets that
 	$
 	\mathcal{\tilde{S}}(\omega - a) \in \mathcal{\tilde{D}}(a).
 	$
 	This is equivalent to getting
 	$
 	\mathcal{\tilde{S}} \omega \in (\mathcal{\tilde{D}} + \mathcal{\tilde{S}}) a,
 	$
 	which means that
 	$
 	a = (\mathcal{\tilde{D}} + \mathcal{\tilde{S}})^{-1} \tilde{\mathcal{S}} \omega = \mathcal{\tilde{T}}(\omega).
 	$
 	Hence, $(\omega, a) \in \text{gph}\ (\mathcal{\tilde{T}})$. The proof of the assertion (c) is completed.
 \end{proof}
 
 Under Lemma \ref{lemma1}, we further analyze the non-expansiveness and closedness of the $x$-component iterative operator.
 \begin{lemma}\label{lemma2}
 	Suppose that  Assumption \ref{ass2} holds. For any $\lambda^k\in \Lambda$, denote ${x}_l^{(j+1)}=\mathcal{T}_{\lambda^k}(x^{(j)})$  according to the $x$-subproblem \eqref{subx0}. Let $\mathcal{S}:=\sigma (A^{\top}A +\mathcal{S}_x)$.
 	Then we  have:\\
 	\begin{enumerate}
 		\item[(a)] The operator $\mathcal{T}_{\lambda^k}$ is firmly non-expansive with respect to the norm $\|\cdot\|_{\mathcal{S}}$, meaning that for any $(u, v)\in \mathbb{R}^n\times \mathbb{R}^n$, the following inequality holds
 		\begin{equation}\label{lemma21}
 			\begin{aligned}
 				&\|u - v\|^2_{\mathcal{S}} - \|\mathcal{T}_{\lambda^k}(u) - \mathcal{T}_{\lambda^k}(v)\|^2_{\mathcal{S}}\\[2mm]
 				\geq&\|(u - \mathcal{T}_{\lambda^k}(u)) - (v - \mathcal{T}_{\lambda^k}(v ))\|^2_{\mathcal{S}} \geq 0.
 			\end{aligned}
 		\end{equation}
 		\item[(b)] The operator $\mathcal{T}_{\lambda^k}$ is closed, i.e., $\text{gph}\ (\mathcal{T}_{\lambda^k})$ is closed.
 	\end{enumerate}
 \end{lemma}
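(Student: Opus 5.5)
The plan is to realize $\mathcal{T}_{\lambda^k}$ as a resolvent of a maximal monotone operator in the metric induced by $\mathcal{S}=\sigma(A^{\top}A+\mathcal{S}_x)$, and then reuse the monotonicity argument of Lemma \ref{lemma1}(b) and the closedness argument of Lemma \ref{lemma1}(c). Since $\mathcal{S}_x=\zeta\mathcal{I}-A^{\top}A$, we have $\mathcal{S}=\sigma\zeta\mathcal{I}$, which is positive definite, so $\|\cdot\|_{\mathcal{S}}$ is a genuine norm.

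\textbf{Step 1 (resolvent form).} Write the optimality condition of the $x$-subproblem \eqref{subx0}, exactly as in the proof of Lemma \ref{lemma1}(a):
$$0\in\partial_x\mathcal{R}(x_l^{(j+1)},\lambda^k)+A^{\top}z_l^{(j+1)}+\mathcal{S}(x_l^{(j+1)}-x^{(j)})+A^{\top}(z_l^{(j+1)}-z_l^{(j)}).$$
The terms involving $y_l^{(j+1)}$ and $z_l^{(j)}$ enter only through an additive vector $c$ that does not depend on $x_l^{(j+1)}$. I will treat these as the data accompanying $x^{(j)}$, in the same way $\omega^j$ carries them in Lemma \ref{lemma1}. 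This rewrites the update as
$$\mathcal{S}x^{(j)}+c\in(\mathcal{B}+\mathcal{S})(x_l^{(j+1)}),\qquad \mathcal{B}:=\partial_x\mathcal{R}(\cdot,\lambda^k),$$
so that $\mathcal{T}_{\lambda^k}(u)=(\mathcal{B}+\mathcal{S})^{-1}(\mathcal{S}u+c)$. Here $\mathcal{B}$ is maximal monotone by Assumption \ref{ass2}, since $\mathcal{R}(\cdot,\lambda^k)$ is a nonnegative combination of proper closed convex functions. Consequently $\mathcal{B}+\mathcal{S}$ is strongly monotone and $\mathcal{T}_{\lambda^k}$ is single-valued and everywhere defined. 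Alternatively, one can read this off directly from \eqref{subx}, which expresses $\mathcal{T}_{\lambda^k}$ as a proximal mapping.

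\textbf{Step 2 (firm non-expansiveness).} Let $a=\mathcal{T}_{\lambda^k}(u)$ and $b=\mathcal{T}_{\lambda^k}(v)$. Subtracting the two inclusions, the constant $c$ cancels and we get
$$\mathcal{S}(u-a)-\mathcal{S}(v-b)\in\mathcal{B}(a)-\mathcal{B}(b).$$
Pairing with $a-b$ and using monotonicity of $\mathcal{B}$ gives $\langle u-v,a-b\rangle_{\mathcal{S}}\ge\|a-b\|^2_{\mathcal{S}}$. Expanding
$$\|(u-a)-(v-b)\|_{\mathcal{S}}^2=\|u-v\|_{\mathcal{S}}^2-2\langle u-v,a-b\rangle_{\mathcal{S}}+\|a-b\|_{\mathcal{S}}^2$$
and substituting the inequality yields \eqref{lemma21}. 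This is the polarization identity turning firm non-expansiveness into the stated difference form.

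\textbf{Step 3 (closedness).} Take $(u^j,a^j)\in\mathrm{gph}\,\mathcal{T}_{\lambda^k}$ with $(u^j,a^j)\to(u,a)$. Then $\mathcal{S}(u^j-a^j)+c\in\mathcal{B}(a^j)$ and the left side converges to $\mathcal{S}(u-a)+c$. Since the graph of a maximal monotone operator is closed, we get $\mathcal{S}(u-a)+c\in\mathcal{B}(a)$, that is, $a=\mathcal{T}_{\lambda^k}(u)$. A shorter route is also available: firm non-expansiveness makes $\mathcal{T}_{\lambda^k}$ Lipschitz continuous on all of $\mathbb{R}^n$, and a continuous everywhere-defined map has closed graph.

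The main obstacle is Step 1. The update \eqref{subx0} depends on $x^{(j)}$ not only directly but also through $y_l^{(j+1)}$ and $z_l^{(j+1)}$, which are themselves computed from $x^{(j)}$. A map of $x^{(j)}$ alone is therefore only well defined once the $(y,z)$-data are fixed. My first attempt will be to freeze these data, as above, so that they act as the constant shift $c$, which cancels in Step 2. If the statement is instead intended as the full composite map of $x^{(j)}$, I would fall back on Lemma \ref{lemma1}(b) and restrict to the $x$-block of $\|\cdot\|_{\tilde{\mathcal{S}}}$. Controlling the off-diagonal coupling block $A$ in $\tilde{\mathcal{S}}$ is then where real care would be needed.
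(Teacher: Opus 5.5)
Your proposal is correct and is essentially the paper's proof: the paper also folds the $z$-terms into a constant shift by defining $\mathcal{D}_{\lambda^k}:=\partial_x\mathcal{R}(\cdot,\lambda^k)+A^{\top}z_l^{(j+1)}+A^{\top}(z_l^{(j+1)}-z_l^{(j)})$, which is exactly your frozen $c$. It then writes $\mathcal{T}_{\lambda^k}=(\mathcal{D}_{\lambda^k}+\mathcal{S})^{-1}\mathcal{S}$, repeats the monotonicity argument of Lemma~\ref{lemma1}(b), and takes closedness from Lemma~\ref{lemma1}(c). The only difference is cosmetic: you reach \eqref{lemma21} by expanding the square directly, while the paper invokes the fact that $\mathcal{T}$ is firmly non-expansive iff $\mathcal{I}-\mathcal{T}$ is; and since the frozen-data reading is the one the paper uses, your fallback to the full composite map of $x^{(j)}$ is not needed (it would not work in general, as that composite map need not be firmly non-expansive).
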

 \begin{proof}
 	(a) Applying the first-order optimality conditions on \eqref{subx0}, we get
 {\small	\begin{equation}\label{lemma2proof1}
 		\begin{aligned}
 			0\in&\partial_x \mathcal{R}({x}_l^{(j+1)},\lambda^k)+A^{\top}{z}_l^{(j+1)}\\[2mm]
 			&+\sigma(A^{\top}A+\mathcal{S}_x)({x}_l^{(j+1)}-x^{(j)})+A^{\top}({z}^{(j+1)}-{z}_l^{(j)}).
 		\end{aligned}
 	\end{equation}}
 	Define
 	$\mathcal{D}_{\lambda^k}:=\partial_x \mathcal{R}(x,\lambda^k)+A^{\top}{z}_l^{(j+1)}+A^{\top}({z}_l^{(j+1)}-{z}_l^{(j)})
 	$ and $\mathcal{S}:=\sigma (A^{\top}A +\mathcal{S}_x)$, then \eqref{lemma2proof1} can be rewritten as
 	$$
 	{x}_l^{(j+1)}=\mathcal{T}_{\lambda^k}(x^{(j)}),\quad \text{with} \quad \mathcal{T}_{\lambda^k}:=(\mathcal{D}_{\lambda^k}+\mathcal{S})^{-1} \mathcal{S}.
 	$$
 	Therefore, following the the proof process of Lemma \ref{lemma1}, for all $(u, v)\in \mathbb{R}^n\times \mathbb{R}^n$, it holds that
 	$$
 	\|\mathcal{T}_{\lambda^k}(u)-\mathcal{T}_{\lambda^k}(v)\|^2_{\mathcal{S}}\leq\langle u-v, \mathcal{T}_{\lambda^k}(u)-\mathcal{T}_{\lambda^k}(v)\rangle_{\mathcal{S}}.
 	$$
 	this is to say, $\mathcal{T}_{\lambda^k}$ is  firmly non-expansive with respect to  norm $\|\cdot\|_{\mathcal{S}}$.
 	From  \cite{BC2011}, it is known that $\mathcal{T}_{\lambda^k}$ is firmly non-expansive if and only if $\mathcal{I}-\mathcal{T}_{\lambda^k}$ is also firmly non-expansive. Therefore, we get that
 	\begin{align*}
 		&\|(u -\mathcal{T}_{\lambda^k}(u)) - (v - \mathcal{T}_{\lambda^k}(v ))\|^2_{\mathcal{S}} \\[2mm]
 		\leq&	\langle u-v, (u - \mathcal{T}_{\lambda^k}(u)) - (v - \mathcal{T}_{\lambda^k}(v ))\rangle_{\mathcal{S}}\\[2mm]
 		=&\|u - v\|^2_{\mathcal{S}}-\langle u-v, \mathcal{T}_{\lambda^k}(u)-\mathcal{T}_{\lambda^k}(v)\rangle_{\mathcal{S}}\\[2mm]
 		\leq& \|u - v\|^2_{\mathcal{S}}-\|\mathcal{T}_{\lambda^k}(u)-\mathcal{T}_{\lambda^k}(v)\|^2_{\mathcal{S}},
 	\end{align*}
 	which shows that conclusion (a) is true.
 	
 	(b) The proof of the closedness of operator $\mathcal{T}_{\lambda^k}$ follows from Lemma \ref{lemma1} (c), and thus, we omit it here.
 \end{proof}
 
 For the sake of convenience in the subsequent analysis, we abbreviate $\mathcal{F}(x,\lambda)$  as $\mathcal{F}(x)$ with a fixed $\lambda$, and abbreviate $\varphi(\lambda)$  as $\mathcal{F}^*$ at optimal solution $(\bar x,\bar\lambda)$ , and then introduce the following notations:
 \begin{align*}
 	\mathcal{V}&=\mathcal{V}({\lambda}):=\arg\min_{x\in\mathbb{R}^n}\Big\{\mathcal{Q}(Ax-b)+\sum^r_{i=1}\lambda_i\mathcal{R}_i(x)\Big\},\\
 	\hat{\mathcal V}&=\hat{\mathcal V}({\lambda}):=\arg\min_{x\in \mathcal{V}\cap \mathbb{X}}\Big\{\mathcal{F}(x,\lambda)\Big\}.
 \end{align*}
 Clearly, it holds that $\hat{\mathcal{V}}\subseteq\mathcal{V}$.
 
 Using these notations and noting that $\mathcal{S}=\sigma (A^{\top}A +\mathcal{S}_x)$, we can present the following lemma.
 \begin{lemma}\label{lemma3}
 	Suppose that Assumption \ref{ass2} holds. For $ \lambda^k \in \Lambda $, let the sequence $ \{ x^{(j)} \} $ be generated by Step 1 of  ADMM-BDA with $ s_j = \frac{s}{j+1} $, where $ s \in (0, 2\lambda_{\min}(\mathcal{S})/L_\delta) $ with $ \lambda_{\min}(\mathcal{S}) $ denoting the minimum eigenvalue of  $ \mathcal{S} $. Then, for any $\bar{x} \in\mathcal{V}$, we have:\\
 	\begin{enumerate}
 		\item[(a)] $\|x_l^{(j+1)} - \bar{x}\|_{\mathcal{S}} \leq \|x^{(j)} - \bar{x}\|_{\mathcal{S}}$.\\
 		\item[(b)] The operator $\mathcal{I} - s_j \mathcal{S}^{-1} \nabla_x \mathcal{F}$ is non-expansive with respect to $\|\cdot\|_{\mathcal{S}}$. \\
 		\item[(c)] The sequences $\{x^{(j)}\}$, $\{x_l^{(j)}\}$, and $\{x_u^{(j)}\}$ are all bounded.
 	\end{enumerate}
 \end{lemma}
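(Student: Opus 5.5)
The plan is to prove the three claims in order. Claim (a) follows from the firm non-expansiveness of Lemma \ref{lemma2}, claim (b) from a cocoercivity (Baillon--Haddad) estimate in the $\mathcal{S}$-metric, and claim (c) from compactness of $\mathbb{X}$ together with (a). Throughout, $\lambda^k$ is fixed and $\mathcal{F}(\cdot)=\mathcal{F}(\cdot,\lambda^k)$.

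For (a), fix $\bar x\in\mathcal{V}$. Since the constraint in \eqref{lmodel} is linear and $\mathcal{Q},\mathcal{R}_i$ are proper closed convex (Assumption \ref{ass2}), I will pick a KKT multiplier $\bar z$ and set $\bar y:=A\bar x-b$. Then $\bar\omega:=(\bar y,\bar z,\bar x)$ satisfies $0\in\tilde{\mathcal D}(\bar\omega)$, so $\bar\omega=\tilde{\mathcal T}(\bar\omega)$. In particular $\bar x$ is a fixed point of $\mathcal{T}_{\lambda^k}$, i.e.\ $\bar x=\mathcal{T}_{\lambda^k}(\bar x)$. Applying \eqref{lemma21} with $u=x^{(j)}$ and $v=\bar x$ then gives
$$\|x_l^{(j+1)}-\bar x\|^2_{\mathcal S}\le \|x^{(j)}-\bar x\|^2_{\mathcal S}-\|(x^{(j)}-x_l^{(j+1)})\|^2_{\mathcal S}\le\|x^{(j)}-\bar x\|^2_{\mathcal S},$$
which is (a).

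I expect the main obstacle to be the claim that $\bar x$ is a fixed point of $\mathcal{T}_{\lambda^k}$. The operator $\mathcal{T}_{\lambda^k}$ also depends on the current $(y_l,z_l)$, so a fixed point of it does not follow directly from $\bar x\in\mathcal V$. I will first try to argue it in the spirit of Lemma \ref{lemma2}, with the dual quantities frozen at $(\bar y,\bar z)$. If that cannot be made rigorous, the fallback is to apply Lemma \ref{lemma1}(b) with $v=\bar\omega$, which gives $\|\omega_l^{j+1}-\bar\omega\|_{\tilde{\mathcal S}}\le\|\omega^j-\bar\omega\|_{\tilde{\mathcal S}}$, and then extract the $x$-block. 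The extraction needs some care because $\tilde{\mathcal S}$ has the off-diagonal coupling $A$.

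For (b), take $u,v$ and write $g=\nabla\mathcal F(u)-\nabla\mathcal F(v)$. Expanding the square gives
$$\|u-v-s_j\mathcal S^{-1}g\|^2_{\mathcal S}=\|u-v\|^2_{\mathcal S}-2s_j\langle u-v,g\rangle+s_j^2\|g\|^2_{\mathcal S^{-1}}.$$
By convexity and $L_\delta$-smoothness (Assumption \ref{ass1}), the Baillon--Haddad theorem gives $\langle u-v,g\rangle\ge L_\delta^{-1}\|g\|_2^2$. Combined with $\|g\|^2_{\mathcal S^{-1}}\le\lambda_{\min}(\mathcal S)^{-1}\|g\|_2^2$, this yields $\langle u-v,g\rangle\ge(\lambda_{\min}(\mathcal S)/L_\delta)\|g\|^2_{\mathcal S^{-1}}$. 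The right-hand side is therefore at most $\|u-v\|^2_{\mathcal S}-s_j(2\lambda_{\min}(\mathcal S)/L_\delta-s_j)\|g\|^2_{\mathcal S^{-1}}$. Since $s_j\le s<2\lambda_{\min}(\mathcal S)/L_\delta$, the last term is nonpositive, which proves non-expansiveness.

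For (c), every $x^{(j)}$ with $j\ge1$ lies in the compact set $\mathbb{X}$ because it is produced by the projection $\Pi^{\mathcal S}_{\mathbb X}$. Boundedness of $\{x_l^{(j)}\}$ then follows from (a) with one fixed $\bar x\in\mathcal V$, which is nonempty by Assumption \ref{ass2}: $\|x_l^{(j+1)}-\bar x\|_{\mathcal S}\le\sup_{x\in\mathbb X}\|x-\bar x\|_{\mathcal S}<\infty$. Finally, $\nabla_x\mathcal F(\cdot,\lambda^k)$ is Lipschitz, hence bounded on $\mathbb X$, and $s_j\le s$. So $x_u^{(j+1)}=x^{(j)}-s_j\mathcal S^{-1}\nabla_x\mathcal F(x^{(j)})$ is bounded, which can alternatively be read off from (b) by comparing with a fixed reference point.
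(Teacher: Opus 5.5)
Your treatment of (b), and of the boundedness of $\{x^{(j)}\}$ and $\{x_u^{(j)}\}$ in (c), is correct and is essentially the paper's argument. Bounding $x_u^{(j+1)}$ directly through $\sup_{\mathbb X}\|\nabla_x\mathcal F\|$ is slightly cleaner than the paper's comparison with a reference point. The problem is (a), and it is not a gap the paper fills. The paper's proof of (a) is exactly your one-line appeal to Lemma~\ref{lemma2}(a), and it tacitly uses $\bar x=\mathcal T_{\lambda^k}(\bar x)$. You are right that this does not follow from $\bar x\in\mathcal V$. However, neither of your repairs can close the gap, because the inequality in (a) is false in general.

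Take $n=m=1$, $A=1$, $b=0$, $\mathcal Q(y)=\frac12y^2$ and $\mathcal R\equiv0$. Then $\mathcal V=\{0\}$ with KKT multiplier $\bar z=0$. Take $\sigma=2$, $\eta=0$, $\zeta=2$, so $\mathcal S_x=1$ and $\mathcal S=4$. Then \eqref{suby0}--\eqref{subx0} give $y_l^{(j+1)}=z_l^{(j+1)}=\frac13(2x^{(j)}+z_l^{(j)})$ and
\[
x_l^{(j+1)}=\tfrac23\,x^{(j)}+\tfrac1{12}\,z_l^{(j)} .
\]
For frozen $z_l^{(j)}$ this map is indeed firmly non-expansive. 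Its fixed point, however, is $z_l^{(j)}/4$, which equals $\bar x$ only when $z_l^{(j)}=\bar z$. Starting from $x^{(0)}=\bar x=0$, $y_l^{(0)}=0$, $z_l^{(0)}=3$ gives $x_l^{(1)}=\frac14$. Hence $\|x_l^{(1)}-\bar x\|_{\mathcal S}=\frac12>0=\|x^{(0)}-\bar x\|_{\mathcal S}$, and any small $\eta>0$ gives the same conclusion by continuity.

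Your first repair, freezing the duals at $(\bar y,\bar z)$, produces a different operator from the one that actually maps $x^{(j)}$ to $x_l^{(j+1)}$. The firm non-expansiveness inequality therefore no longer links $x_l^{(j+1)}$ to $\bar x$.

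Your fallback gives the only true statement available here. For a KKT point $\bar\omega$, Lemma~\ref{lemma1}(b) yields $\|\omega_l^{j+1}-\bar\omega\|_{\tilde{\mathcal S}}\le\|\omega^j-\bar\omega\|_{\tilde{\mathcal S}}$; in the example the squared distance drops from $\frac92$ to $\frac54$. But it is the decrease of the multiplier error that pays for the increase of the $x$-error. No extraction of the $x$-block, however you handle the coupling term $A$ in $\tilde{\mathcal S}$, can therefore produce (a).

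Consequently, the boundedness of $\{x_l^{(j)}\}$ in (c), which you and the paper both derive from (a), is also unsupported as written. It would need an independent bound on the dual iterates $\{(y_l^{(j)},z_l^{(j)})\}$ along the aggregated iteration. Such a bound is not automatic: the aggregation step \eqref{xj1} perturbs only the $x$-block of the ADMM triple, so the $\tilde{\mathcal S}$-Fej\'er property of $\omega_l^{j+1}$ need not carry over to $\omega^{j+1}$.
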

 \begin{proof}
 	(a) From \ref{lemma2} (a), it follows that ${x}_l^{(j+1)}=\mathcal{T}_{\lambda^k}(x^{(j)})$, where $\mathcal{T}_{\lambda^k}$ is firmly non-expansive with respect to the norm $\|\cdot\|_{\mathcal{S}}$.
 	Therefore, it can be concluded that
 	$$
 	\|x_l^{(j+1)} - \bar{x}\|_{\mathcal{S}} \leq \|x^{(j)} - \bar{x}\|_{\mathcal{S}}.
 	$$
 	(b) For any $u, v\in\mathbb{R}^n$, we have the following inequality
 	$$
 	\begin{aligned}
 		&\langle \mathcal{S}^{-1} \nabla \mathcal{F}(u)-\mathcal{S}^{-1} \nabla \mathcal{F}(v), u-v\rangle_{\mathcal{S}}\\
 		=&\langle\nabla \mathcal{F}(u)-\nabla \mathcal{F}(v), u-v\rangle \\
 		\geq & \frac{1}{{L_\delta}}\|\nabla \mathcal{F}(u)-\nabla \mathcal{F}(v)\|^2\\
 		\geq& \frac{\lambda_{\min }(\mathcal{S})}{{L_\delta}}\|\mathcal{S}^{-1} \nabla \mathcal{F}(u)-\mathcal{S}^{-1} \nabla\mathcal{F}(v)\|_{\mathcal{S}}^2,
 	\end{aligned}
 	$$
 	the first inequality follows the fact that $\mathcal{F}$ is $L_{\delta}$-smooth in Assumption \ref{ass1}.
 	Based on this inequality, it follows that
 	\begin{align*}
 		&\|(u - v) - s_j \mathcal{S}^{-1} (\nabla \mathcal{F}(u) - \nabla \mathcal{F}(v))\|_{\mathcal{S}}^2\\[2mm]
 		= &\|u - v\|_{\mathcal{S}}^2 - 2s_j \langle \mathcal{S}^{-1} (\nabla \mathcal{F}(u) - \nabla \mathcal{F}(v)), u - v \rangle_{\mathcal{S}}\\
 		& + s_j^2 \|\mathcal{S}^{-1} (\nabla \mathcal{F}(u) - \nabla \mathcal{F}(v))\|_{\mathcal{S}}^2\\
 		\leq& \|u - v\|_{\mathcal{S}}^2 - 2s_j\cdot \frac{\lambda_{\min}(\mathcal{S})}{{L_\delta}} \|\mathcal{S}^{-1} (\nabla \mathcal{F}(u) - \nabla \mathcal{F}(v))\|_{\mathcal{S}}^2 \\
 		&+ s_j^2 \|\mathcal{S}^{-1} (\nabla \mathcal{F}(u) - \nabla \mathcal{F}(v))\|_{\mathcal{S}}^2\\
 		=& \|u - v\|_{\mathcal{S}}^2 - \left( 2s_j \cdot \frac{\lambda_{\min}(\mathcal{S})}{{L_\delta}} - s_j^2 \right) \|\mathcal{S}^{-1} (\nabla \mathcal{F}(u) - \nabla \mathcal{F}(v))\|_{\mathcal{S}}^2.
 	\end{align*}
 	Note that $0<s_j \leq  \frac{2\lambda_{\min}(\mathcal{S})}{L_\delta}$, which implies
 	$     \frac{2s_j\lambda_{\min}(\mathcal{S})}{L_\delta} - s_j^2 \geq 0.$ Therefore, it can be deduced that
 	$$
 	\|(u - v) - s_j \mathcal{S}^{-1} (\nabla \mathcal{F}(u) - \nabla \mathcal{F}(v))\|_{\mathcal{S}}^2\leq\|u - v\|_{\mathcal{S}}^2,
 	$$
 	which implies that the operator $\mathcal{I} - s_j \mathcal{S}^{-1} \nabla_x \mathcal{F}$ is non-expansive with respect to $\|\cdot\|_{\mathcal{S}}$.\\

 	(c) According to Steps $1.3$ of   ADMM-BDA, we see that $x^{(j)}$ is a projection onto the convex  compact set $\mathbb{X}$, hence the sequence $\{x^{(j)}\}$ is bounded. Furthermore, from the fact of Lemma \ref{lemma3} (a), it follows that the sequence  $\{x^{(j)}_l\}$ is also bounded.
 	From the result of Lemma \ref{lemma3} (b), and from the relation
 	$x^{(j+1)}_{u} = x^{(j)} - s_j \mathcal{S}^{-1} \nabla \mathcal{F}(x^{(j)})$,
 	we have that
 	\begin{align*}
 		&\|x^{(j+1)}_{u} - \bar{x} - s_j \mathcal{S}^{-1} \nabla \mathcal{F}(\bar{x})\|_{\mathcal{S}} \\[2mm]
 		=& \|x^{(j)} - s_j \mathcal{S}^{-1} \nabla \mathcal{F}(x^{(j)}) - \bar{x} - s_j \mathcal{S}^{-1} \nabla \mathcal{F}(\bar{x})\|_{\mathcal{S}}\\[2mm]
 		\leq & \|x^{(j)}-\bar{x}\|_{\mathcal{S}}.
 	\end{align*}
 	Because $0 < s_j \leq \frac{2\lambda_{\min}(\mathcal{S})}{{L_\delta}}$, it follows that the sequence $\{x_u^{(j+1)}\}$ is also bounded.
 \end{proof}

 \begin{lemma}\label{lemma4}
 	For any given $\lambda^k \in \Lambda$, let $\{x^{(j)}\}$ be the sequence generated by step $1.3$ in Algorithm ADMM-BDA, where $s_j\in(0, \frac{2\lambda_{\min}(\mathcal{S})}{{L_\delta}})$. For any $x \in \mathcal{V}$, we have
 	\begin{equation}\label{le4}
 		\begin{aligned}
 			\mu s_j \mathcal{F}(x) \geq &\mu s_j \mathcal{F}(x_u^{(j+1)}) - \frac{1}{2}\|x - x^{(j)}\|^2_{\mathcal{S}} + \frac{1}{2}\|x - x^{(j+1)}\|^2_{\mathcal{S}} \\[2mm]
 			&+ \frac{1}{2}\|((1-\mu)x_l^{(j+1)} + \mu x_u^{(j+1)}) - x^{(j+1)}\|^2_{\mathcal{S}} \\[2mm]
 			&+ \frac{(1 - \mu)}{2}\|x^{(j)} - x_l^{(j+1)}\|_{\mathcal{S}}^2\\[2mm]
 			&+ \frac{\mu}{2}\Big(1 - s_j\frac{{L_\delta}}{\lambda_{\min}(\mathcal{S})}\Big)\|x^{(j)}- x^{(j+1)}_u\|^2_{\mathcal{S}}.
 		\end{aligned}
 	\end{equation}
 \end{lemma}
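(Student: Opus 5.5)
Set $w:=\mu x_u^{(j+1)}+(1-\mu)x_l^{(j+1)}$, so that $x^{(j+1)}=\Pi^{\mathcal{S}}_{\mathbb{X}}(w)$. The inequality \eqref{le4} should then follow by adding three estimates: one for the lower-level point $x_l^{(j+1)}$ with weight $(1-\mu)$, one for the upper-level gradient point $x_u^{(j+1)}$ with weight $\mu$, and one for the $\mathcal{S}$-projection. Throughout we fix $x\in\mathcal{V}$. Since $x^{(j+1)}$ is a projection onto $\mathbb{X}$, we also take $x\in\mathcal{V}\cap\mathbb{X}$, which is what the projection step needs. We write every squared distance through the identity
\[
2\langle a-b,c-b\rangle_{\mathcal{S}}=\|a-b\|_{\mathcal{S}}^2+\|c-b\|_{\mathcal{S}}^2-\|a-c\|_{\mathcal{S}}^2 .
\]

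\emph{Lower-level estimate.} Any $x\in\mathcal{V}$ is a fixed point of $\mathcal{T}_{\lambda^k}$, because the $x$-part of the ADMM optimality system at a KKT point returns $x$ itself. Apply Lemma \ref{lemma2}(a) with $u=x^{(j)}$ and $v=x$:
\[
\|x_l^{(j+1)}-x\|_{\mathcal{S}}^2\le \|x^{(j)}-x\|_{\mathcal{S}}^2-\|x^{(j)}-x_l^{(j+1)}\|_{\mathcal{S}}^2 .
\]
Multiplying by $(1-\mu)/2$ produces the term $\frac{1-\mu}{2}\|x^{(j)}-x_l^{(j+1)}\|^2_{\mathcal{S}}$.

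\emph{Upper-level estimate.} Convexity gives $\mathcal{F}(x)\ge\mathcal{F}(x^{(j)})+\langle\nabla\mathcal{F}(x^{(j)}),x-x^{(j)}\rangle$. $L_\delta$-smoothness gives
\[
\mathcal{F}(x_u^{(j+1)})\le \mathcal{F}(x^{(j)})+\langle\nabla\mathcal{F}(x^{(j)}),x_u^{(j+1)}-x^{(j)}\rangle+\frac{L_\delta}{2\lambda_{\min}(\mathcal{S})}\|x_u^{(j+1)}-x^{(j)}\|_{\mathcal{S}}^2,
\]
using $\|\cdot\|^2\le\|\cdot\|_{\mathcal{S}}^2/\lambda_{\min}(\mathcal{S})$. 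Combining the two bounds,
\[
s_j\mathcal{F}(x)\ge s_j\mathcal{F}(x_u^{(j+1)})+s_j\langle\nabla\mathcal{F}(x^{(j)}),x-x_u^{(j+1)}\rangle-\frac{s_jL_\delta}{2\lambda_{\min}(\mathcal{S})}\|x_u^{(j+1)}-x^{(j)}\|_{\mathcal{S}}^2 .
\]
By \eqref{xj1u}, $s_j\nabla\mathcal{F}(x^{(j)})=\mathcal{S}(x^{(j)}-x_u^{(j+1)})$. The inner product therefore equals $\langle x^{(j)}-x_u^{(j+1)},x-x_u^{(j+1)}\rangle_{\mathcal{S}}$, which the three-point identity turns into
\[
\tfrac12\|x-x_u^{(j+1)}\|_{\mathcal{S}}^2+\tfrac12\|x^{(j)}-x_u^{(j+1)}\|_{\mathcal{S}}^2-\tfrac12\|x-x^{(j)}\|_{\mathcal{S}}^2 .
\]
Multiplying by $\mu$ gives $\mu s_j\mathcal{F}(x)$ against $\mu s_j\mathcal{F}(x_u^{(j+1)})$, together with the coefficient $\frac{\mu}{2}\big(1-s_jL_\delta/\lambda_{\min}(\mathcal{S})\big)$ on $\|x^{(j)}-x_u^{(j+1)}\|^2_{\mathcal{S}}$.

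\emph{Projection and combination.} We add the two weighted estimates. Convexity of $\|\cdot-x\|_{\mathcal{S}}^2$ yields
\[
\mu\|x_u^{(j+1)}-x\|_{\mathcal{S}}^2+(1-\mu)\|x_l^{(j+1)}-x\|_{\mathcal{S}}^2\ge\|w-x\|_{\mathcal{S}}^2 ,
\]
and the $\|x-x^{(j)}\|^2_{\mathcal{S}}$ terms merge into $-\frac12\|x-x^{(j)}\|^2_{\mathcal{S}}$. The variational inequality of the $\mathcal{S}$-projection onto the convex set $\mathbb{X}$, applied with $x\in\mathbb{X}$, gives
\[
\|w-x\|_{\mathcal{S}}^2\ge\|x^{(j+1)}-x\|_{\mathcal{S}}^2+\|w-x^{(j+1)}\|_{\mathcal{S}}^2 .
\]
This produces both $\frac12\|x-x^{(j+1)}\|^2_{\mathcal{S}}$ and $\frac12\|w-x^{(j+1)}\|^2_{\mathcal{S}}$, which completes \eqref{le4}.

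The main obstacle is the bookkeeping in the upper-level step. The sign of the inner-product term has to be arranged so that the three-point identity gives $+\frac12\|x^{(j)}-x_u^{(j+1)}\|^2_{\mathcal{S}}$, which then absorbs the smoothness penalty. The other point to check is that elements of $\mathcal{V}$ really are fixed points of $\mathcal{T}_{\lambda^k}$, that is, that the paper's operator is the one associated with a KKT pair $(y,z)$.
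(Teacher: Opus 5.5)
Your argument is the paper's proof essentially line for line. The $\mu$-weighted upper-level estimate (convexity, descent lemma, $s_j\nabla\mathcal F(x^{(j)})=\mathcal S(x^{(j)}-x_u^{(j+1)})$, three-point identity) is the paper's \eqref{23}. The $\frac{1-\mu}{2}$-weighted estimate from Lemma~\ref{lemma2}(a) with $v=x$ is its \eqref{24}. The closing step via convexity of $\|\cdot-x\|^2_{\mathcal S}$ and the $\mathcal S$-projection inequality is identical. Restricting to $x\in\mathcal V\cap\mathbb X$ is a correct sharpening that the paper uses silently: the projection inequality needs $\Pi^{\mathcal S}_{\mathbb X}(x)=x$, and the lemma is later applied only at points of $\hat{\mathcal V}\subseteq\mathbb X$.

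The step you flagged at the end, however, is a genuine gap, and the paper's proof has the same one. Your justification (``at a KKT point the $x$-part of the sweep returns $x$'') is valid only if the $(y,z)$ entering the sweep are multipliers for $x$. The operator $\mathcal T_{\lambda^k}$ of Lemma~\ref{lemma2} is instead built from the current iterates; the optimality condition of \eqref{subx0} reads
\[
x_l^{(j+1)}=\big(\partial_x\mathcal R(\cdot,\lambda^k)+\mathcal S\big)^{-1}\big(\mathcal S x^{(j)}-A^\top(2z_l^{(j+1)}-z_l^{(j)})\big).
\]
Its fixed points are therefore the $x$ with $-A^\top(2z_l^{(j+1)}-z_l^{(j)})\in\partial_x\mathcal R(x,\lambda^k)$. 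This set in general does not contain $\mathcal V$ unless the $z$-iterates happen to be multipliers, so \eqref{24}, and \eqref{le4} itself, can fail.

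Here is a concrete instance. Take $n=m=1$, $A=1$, $b=0$, $\mathcal Q=\frac12(\cdot)^2$, $\mathcal R(\cdot,\lambda^k)\equiv0$, $\sigma=\zeta=2$ (so $\mathcal S=4$), any $\eta\ge0$, $\mathcal F(x)=\frac12x^2$, and $\mathbb X=[-1,1]$; then $\mathcal V=\{0\}$. Start from $x^{(0)}=0$, $z_l^{(0)}=c$, $y_l^{(0)}=c/3$ with $0<|c|\le12$. The scheme gives $y_l^{(1)}=z_l^{(1)}=c/3$, $x_l^{(1)}=c/12$, $x_u^{(1)}=0$, and $x^{(1)}=(1-\mu)c/12$. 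At $x=0$ the left side of \eqref{le4} is $0$, while the right side equals $(1-\mu)(2-\mu)c^2/72>0$.

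What does hold is the Fejér inequality of Lemma~\ref{lemma1}(b) for the whole triple $(y_l^{(j)},z_l^{(j)},x^{(j)})$ against a KKT triple, in the $\tilde{\mathcal S}$-norm. But $\tilde{\mathcal S}$ couples $z$ and $x$ through the block $A$, while the aggregation step acts on $x$ alone. So it does not yield the $x$-only $\mathcal S$-norm estimate you need, and the lower-level half of the argument has to be rebuilt around a merit function on the full triple rather than patched.
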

 \begin{proof}
 	We  obtain from the definition of $x^{(j+1)}_{u}$ that
 	$$
 	x^{(j+1)}_{u} = x^{(j)}- s_j  \mathcal{S}^{-1} \nabla_x \mathcal{F}(x^{(j)}, \lambda),
 	$$
 	which implies
 	$$
 	\mathcal{S}(x^{(j+1)}_u - x^{(j)}) + s_j \nabla \mathcal{F}(x^{(j)}) = 0.
 	$$
 	Since $\mathcal{S}$ is a self-adjoint linear operator, for any $x$, it holds that
 	$$
 	\langle x^{(j+1)}_u - x^{(j)}, \mathcal{S}(x - x_u^{(j+1)}) \rangle + s_j \langle \nabla \mathcal{F}(x^{(j)}), x - x_u^{(j+1)} \rangle = 0.
 	$$
 	Since $ \mathcal{F}(\cdot)$ is convex and $ \nabla \mathcal{F}$ is $L_{\delta}$-Lipschitz continuous, we have
 	\begin{equation}\label{lemma4:eq2}
 		\begin{aligned}
 			& \langle \nabla \mathcal{F}(x^{(j)}), x - x_u^{(j+1)} \rangle\\[2mm]
 			=& \langle \nabla \mathcal{F}(x^{(j)}), x - x^{(j)} \rangle + \big\langle \nabla \mathcal{F}(x^{(j)}), x^{(j)} - x_u^{(j+1)} \big\rangle\\[2mm]
 			\leq & \mathcal{F}(x) - \mathcal{F}(x^{(j)}) + \mathcal{F}(x^{(j)}) - \mathcal{F}(x_u^{(j+1)}) \\[2mm]
 			&+ \frac{{L_\delta}}{2} \|x^{(j)} - x^{(j+1)}_u\|^2_2\\[2mm]
 			= & \mathcal{F}(x) - \mathcal{F}(x_u^{(j+1)}) + \frac{{L_\delta}}{2} \|x^{(j)} - x^{(j+1)}_u\|^2_2 \\[2mm]
 			\leq & \mathcal{F}(x) - \mathcal{F}(x_u^{(j+1)}) + \frac{{L_\delta}}{2 \lambda_{\min}(\mathcal{S})} \|x^{(j)}- x^{(j+1)}_u\|^2_{\mathcal{S}}.
 		\end{aligned}
 	\end{equation}
 	Combining the expression
 	$
 	\langle x_u^{(j+1)}-x^{(j)},\mathcal{S} (x-x^{(j+1)}_u)\rangle=\frac{1}{2}(\|x-x^{(j)}\|^2_{\mathcal{S}}-\|x-x_u^{(j+1)}\|^2_{\mathcal{S}}-\|x^{(j)}-x^{(j+1)}_u\|^2_{\mathcal{S}})
 	$
 	with the inequalities in \eqref{lemma4:eq2}, it follows that for any $x$ and $\mu\in(0,1)$, we have
 	$$
 	\begin{aligned}
 		&\frac{\mu}{2}\Big(\|x-x^{(j)}\|^2_{\mathcal{S}}-\|x-x_u^{(j+1)}\|^2_{\mathcal{S}}-\|x^{(j)}-x^{(j+1)}_u\|^2_{\mathcal{S}}\Big)\\[2mm]
 		=&\mu\langle x^{(j+1)}_u-x^{(j)}, \mathcal{S}(x-x_u^{(j+1)})\rangle\\[2mm]
 		=&-\mu s_j\langle\nabla \mathcal{F}(x^{(j)}),x-x_u^{(j+1)}\rangle\\[2mm]
 		\geq&- \mu s_j\Big(\mathcal{F}(x) - \mathcal{F}(x_u^{(j+1)}) + \frac{{L_\delta}}{2 \lambda_{\min}(\mathcal{S})} \|x^{(j)}- x^{(j+1)}_u\|^2_{\mathcal{S}}\Big).
 	\end{aligned}
 	$$
 	that is,
 	\begin{equation}\label{23}
 		\begin{aligned}
 			\mu s_j \mathcal{F}(x) \geq &\mu s_j \mathcal{F}(x_u^{(j+1)}) - \frac{\mu}{2} \|x - x^{(j)}\|_{\mathcal{S}}^2 \\
 			&+ \frac{\mu}{2} \|x - x_u^{(j+1)}\|_{\mathcal{S}}^2 \\
 			&+ \frac{\mu}{2} \Big( 1 - s_j \frac{{L_\delta}}{\lambda_{\min}(\mathcal{S})} \Big) \|x^{(j)} - x_u^{(j+1)}\|_{\mathcal{S}}^2.
 		\end{aligned}
 	\end{equation}
 	From ${x}_l^{(j+1)} = \mathcal{T}_{\lambda^k}(x^{(j)})$, and given that $\mathcal{T}_{\lambda^k}$ satisfies inequality \eqref{lemma21} in Lemma \ref{lemma2}, for any $x \in \mathcal{V}$ and $\frac{1-\mu}{2} \in (0, 1)$, we obtain
 {\small	\begin{equation}\label{24}
 		\frac{1-\mu}{2}\|x^{(j)} - x\|^2_{\mathcal{S}} \geq  \frac{1-\mu}{2}\|x^{(j)} - x_l^{(j+1)}\|^2_{\mathcal{S}} + \frac{1-\mu}{2}\|x_l^{(j+1)} - x\|^2_{\mathcal{S}}.
 	\end{equation}}
 	Summing both sides of \eqref{23} and \eqref{24}, for any $x \in\mathcal{V}$, we obtain
 	\begin{equation}\label{sum}
 		\begin{aligned}
 			\mu s_j \mathcal{F}(x) \geq & \ \mu s_j \mathcal{F}(x_u^{(j+1)}) - \frac{1}{2} \|x - x^{(j)}\|_{\mathcal{S}}^2 \\[2mm]
 			&+ \frac{(1 - \mu)}{2} \|x^{(j)} - x_l^{(j+1)}\|_{\mathcal{S}}^2 \\[2mm]
 			&+ \frac{1}{2} \Big( \mu \|x - x_u^{(j+1)}\|_{\mathcal{S}}^2 + (1 - \mu) \|x^{(j+1)}_l - x\|_{\mathcal{S}}^2 \Big) \\[2mm]
 			&+ \frac{\mu}{2} \Big( 1 - s_j \frac{{L_\delta}}{\lambda_{\min}(\mathcal{S})}\Big) \|x^{(j)} - x^{(j+1)}_u\|_{\mathcal{S}}^2.
 		\end{aligned}
 	\end{equation}
 	Using the convexity of $\|\cdot\|^2_{\mathcal{S}}$ and the  firm non-expansiveness of the projection operator \cite{BC2011}, it follows that
 	\begin{align*}
 		&\mu\|x - x_u^{(j+1)}\|^2_{\mathcal{S}} + (1 - \mu)\|x^{(j+1)}_l - x\|^2_{\mathcal{S}}\\[2mm]
 		\geq &\|x - \big((1 - \mu)x_l^{(j+1)} + \mu x_u^{(j+1)}\big)\|^2_{\mathcal{S}}\\[2mm]
 		\geq &\|x - x^{(j+1)}\|^2_{\mathcal{S}} + \|((1 - \mu)x_l^{(j+1)} + \mu x_u^{(j+1)}) - x^{(j+1)}\|^2_{\mathcal{S}}.
 	\end{align*}
 	Then, from (\ref{sum}), we obtain for any $x \in \mathcal{V}$ that
 	$$
 	\begin{aligned}
 		\mu s_j \mathcal{F}(x) \geq & \ \mu s_j \mathcal{F}(x_u^{(j+1)}) - \frac{1}{2} \|x - x^{(j)}\|^2_{\mathcal{S}} + \frac{1}{2} \|x - x^{(j+1)}\|^2_{\mathcal{S}} \\[2mm]
 		& + \frac{1}{2} \| ( (1 - \mu)x_l^{(j+1)} + \mu x_u^{(j+1)} ) - x^{(j+1)} \|^2_{\mathcal{S}} \\[2mm]
 		& + \frac{(1 - \mu)}{2} \|x^{(j)} - x_l^{{(j+1)}}\|^2_{\mathcal{S}}\\[2mm]
 		& + \frac{\mu}{2} \Big( 1 - s_j \frac{{L_\delta}}{\lambda_{\min}(\mathcal{S})} \Big) \|x^{(j)} - x^{(j+1)}_u\|^2_{\mathcal{S}},
 	\end{aligned}
 	$$
 	which completes the proof.
 \end{proof}
 
 \begin{proposition}\label{pro1}
 	For any given $\lambda^k \in \Lambda$  and $\mu \in (0,1)$, let $\{x^{(j)}\}$ be the sequence generated in Step $1$ of the Algorithm ADMM-BDA, where $s_j\in (0, \frac{\lambda_{\min}(\mathcal{S})}{{L_\delta}})$ with $\lim_{j \to \infty}s_j=0$ and  $\sum_{j=0}^{\infty}s_j= +\infty$ hold. Assuming that $\hat{\mathcal{V}}$ is nonempty, we have
 	$$
 	\lim_{j \to \infty}\mathrm{dist}_{\mathcal{S}}(x^{(j)}, \hat{\mathcal{V}}) = 0, \quad \text{and} \quad
 	\lim_{j \to \infty} \mathcal{F}(x^{(j)}, \lambda^k) = \varphi(\lambda^k).
 	$$
 \end{proposition}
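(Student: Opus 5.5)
We follow the BDA/BMO pattern of Liu et al.: fix $\lambda^k$, abbreviate $\mathcal{F}(\cdot)=\mathcal{F}(\cdot,\lambda^k)$, and use the one-step estimate \eqref{le4} of Lemma \ref{lemma4} as a quasi-Fejér inequality relative to $\hat{\mathcal V}$. The step-size condition $s_j<\lambda_{\min}(\mathcal S)/L_\delta$ makes the coefficient $1-s_jL_\delta/\lambda_{\min}(\mathcal S)$ positive. Choosing $x=\hat x\in\hat{\mathcal V}$ in \eqref{le4} (legitimate since $\hat{\mathcal V}\subseteq\mathcal V$), setting $\mathcal{F}^*=\mathcal F(\hat x)=\varphi(\lambda^k)$, and dropping the nonnegative squared terms that are not needed gives
\begin{equation*}
\|x^{(j+1)}-\hat x\|^2_{\mathcal S}\le \|x^{(j)}-\hat x\|^2_{\mathcal S}-2\mu s_j\big(\mathcal F(x_u^{(j+1)})-\mathcal F^*\big)-R_j ,
\end{equation*}
where $R_j\ge 0$ collects $(1-\mu)\|x^{(j)}-x_l^{(j+1)}\|^2_{\mathcal S}$, $\mu(1-s_jL_\delta/\lambda_{\min}(\mathcal S))\|x^{(j)}-x_u^{(j+1)}\|^2_{\mathcal S}$ and the projection residual. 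Since $x_u^{(j+1)}-x^{(j)}=-s_j\mathcal S^{-1}\nabla\mathcal F(x^{(j)})$ and $\nabla\mathcal F$ is bounded on the bounded set of Lemma \ref{lemma3}(c), we have $\mathcal F(x_u^{(j+1)})\ge \mathcal F(x^{(j)})-Cs_j$ for some constant $C$.

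\textbf{Case split on the sign of $\mathcal F(x^{(j)})-\mathcal F^*$.} The difficulty is that $x^{(j)}$ need not lie in $\mathcal V$, so $\mathcal F(x^{(j)})$ may fall below $\mathcal F^*$ and the Fejér inequality is not monotone.

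\emph{Step 1 (feasibility).} Summing the inequality shows $\sum_j R_j<\infty$ up to the perturbation $\sum_j s_j\,(\mathcal F^*-\mathcal F(x_u^{(j+1)}))_+$. This perturbation is bounded by $s_j(\mathcal F^*-F_0)$, which is not summable on its own. The remedy is to argue by subsequences: along any subsequence where the distance does not decrease, the terms $R_j$ must be small. This gives $\|x^{(j)}-\mathcal T_{\lambda^k}(x^{(j)})\|_{\mathcal S}\to0$ along those indices. By closedness of $\mathcal T_{\lambda^k}$ (Lemma \ref{lemma2}(b)), the fixed points of $\mathcal T_{\lambda^k}$ are (the $x$-parts of) KKT points of \eqref{lmodel}, so every such cluster point $\tilde x$ lies in $\mathcal V\cap\mathbb X$. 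Being feasible, it then satisfies $\mathcal F(\tilde x)\ge \mathcal F^*$.

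\emph{Step 2 (standard argument of the type of Lemma 3.2 in Liu et al.).} Fix $\epsilon>0$. On indices with $\mathcal F(x^{(j)})\ge \mathcal F^*+\epsilon$ and $s_j$ small, the distance $\mathrm{dist}_{\mathcal S}(x^{(j)},\hat{\mathcal V})^2$ decreases by at least $\mu s_j\epsilon$. Because $\sum s_j=\infty$, such indices cannot persist, and infinitely many iterates satisfy $\mathcal F(x^{(j)})<\mathcal F^*+\epsilon$. Combining this with Step 1 and compactness, the cluster points of those iterates lie in $\{x\in\mathcal V\cap\mathbb X:\mathcal F(x)\le\mathcal F^*+\epsilon\}$. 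That set shrinks to $\hat{\mathcal V}$ as $\epsilon\to0$, by continuity of $\mathcal F$ and closedness of $\mathcal V$.

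\emph{Step 3 (no escape).} Once an iterate is within $\delta$ of $\hat{\mathcal V}$, the per-step increase of the distance is at most $O(s_j)+O(s_j^2)$, which tends to zero, while the decrease argument of Step 2 applies whenever the iterate is far. Together these give $\limsup_j\mathrm{dist}_{\mathcal S}(x^{(j)},\hat{\mathcal V})\le\delta$ for every $\delta$, and hence $\mathrm{dist}_{\mathcal S}(x^{(j)},\hat{\mathcal V})\to0$.

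\textbf{Function values.} Since $\mathcal F$ is continuous on the compact set $\mathbb X$ and equals $\varphi(\lambda^k)$ on $\hat{\mathcal V}$, the convergence of the distance yields $\mathcal F(x^{(j)})\to\varphi(\lambda^k)$.

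The main obstacle is Step 1 combined with Step 3. We must control the possibly negative term $\mathcal F(x_u^{(j+1)})-\mathcal F^*$ without a summability assumption, and convert smallness of the ADMM residual into closeness to $\mathcal V$. The latter uses only closedness of $\mathcal T_{\lambda^k}$ and compactness of $\mathbb X$, not an error bound.
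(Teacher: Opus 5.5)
Your one-step inequality and Step 1 are sound. Step 1 is essentially the ingredient of the paper's Case (b): at an index where the distance fails to decrease, $R_j\le 2\mu s_j(\mathcal F^*-F_0)\to 0$ and $\mathcal F(x_u^{(j+1)})\le\mathcal F^*$. The gap is in Steps 2 and 3.

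In Step 2 you assert that the cluster points of the iterates with $\mathcal F(x^{(j)})<\mathcal F^*+\epsilon$ lie in $\mathcal V$. But Step 1 only controls $R_j$ at indices where the distance does not decrease, and these low-$\mathcal F$ iterates need not be such indices. Nothing you have shown keeps $\|x^{(j)}-\mathcal T_{\lambda^k}(x^{(j)})\|_{\mathcal S}$ small there.

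Likewise, in Step 3 the claim that ``the decrease argument of Step 2 applies whenever the iterate is far'' is false as stated. That argument needs $\mathcal F(x^{(j)})\ge\mathcal F^*+\epsilon$, whereas an iterate far from $\hat{\mathcal V}$ can be lower-level infeasible with $\mathcal F(x^{(j)})$ below $\mathcal F^*$. This is exactly the difficulty you flagged at the start, and the proposal never resolves it. Without a uniform decrease at such points, the ``no escape'' argument does not close.

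There are two ways to repair this. Write $d_j:=\mathrm{dist}_{\mathcal S}(x^{(j)},\hat{\mathcal V})$.

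\emph{First repair: a compactness dichotomy.} Show that for every $\delta>0$ there exist $\epsilon,\eta>0$ such that every $x\in\mathbb X$ with $\mathrm{dist}_{\mathcal S}(x,\hat{\mathcal V})\ge\delta$ satisfies $\|x-\mathcal T_{\lambda^k}(x)\|_{\mathcal S}\ge\eta$ or $\mathcal F(x)\ge\mathcal F^*+\epsilon$. Otherwise, compactness of $\mathbb X$ and closedness of $\mathcal T_{\lambda^k}$ produce a limit point that lies in $\hat{\mathcal V}$ yet is at distance at least $\delta$ from it.
\begin{itemize}
\item In the first alternative, $d_{j+1}^2\le d_j^2-(1-\mu)\eta^2+2\mu s_j(\mathcal F^*-F_0)$, which is a fixed decrease for large $j$.
\item In the second alternative, your $\mu s_j\epsilon$ decrease applies.
\end{itemize}
Combined with $d_{j+1}^2\le d_j^2+2\mu s_j(\mathcal F^*-F_0)$ and $\sum_j s_j=\infty$, this gives $\limsup_j d_j\le\delta$. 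This would be a genuinely different, single-case route from the paper's.

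\emph{Second repair: the paper's Cabot-type case split.}
\begin{itemize}
\item If only finitely many indices fail to decrease the distance, then $R_j\le(d_j^2-d_{j+1}^2)+2\mu s_j(\mathcal F^*-F_0)\to 0$ along the whole sequence. Your Step 2 then becomes valid, and convergence of the monotone sequence $d_j$ forces $d_j\to 0$.
\item If infinitely many indices fail, $d_j$ is nonincreasing between consecutive failing indices, and Step 1 applied at those indices finishes the proof.
\end{itemize}
This is the paper's $\nu_\kappa$ argument. The paper phrases its case criterion with only a $\gamma$-fraction of the residuals, rather than plain non-decrease of the distance, so that in its Case (a) the remaining residuals are summable and every cluster point is lower-level feasible.
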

 \begin{proof}
 	Let $\gamma>0$ be a parameter such that $\gamma<\min\Big\{\frac{(1 - \mu)}{2}, \frac{\mu}{2}\Big(1 - \frac{s_jL_\delta}{\lambda_{\min}(\mathcal{S})}\Big)\Big\}$. We define a sequence $\{\nu_k\}$ as follows
 	\begin{align*}
 		\nu_{\kappa}\overset{\text{def}}{=} \max_j\Big\{& j\in\mathbb{N}\mid j\leq \kappa, \ \text{and} \ \gamma\|x^{(j-1)}-x_u^{(j)}\|_{\mathcal{S}}^2\\[2mm]
 		&+\gamma\|x^{(j-1)}-x_l^{(j)}\|_{\mathcal{S}}^2\\[2mm]
 		&+\frac{1}{4}\| ((1-\mu)x_l^{(j)}+\mu x_u^{(j)})-x^{(j)}\|_{\mathcal{S}}^2\\[2mm]
 		&+{\mu s_{j-1}}(\mathcal{F}(x_u^{(j)})-\mathcal{F}^*)<0\Big\},
 	\end{align*}
 	where $\kappa$ is a positive integer.
 	Similar to the work of Liu et al. \cite{LLZ2022} and Cabot \cite{C2005}, we consider the following two cases.
 	
 	{Case (a):} Suppose that the sequence $\{\nu_\kappa\}$ is finite, i.e., there exists $j_0\in\mathbb{N}$ such that for all $j\geq j_0$, we have
 {\small	\begin{align*}			
 		&\gamma\|x^{(j-1)}-x_l^{(j)}\|_{\mathcal{S}}^2
 		+\gamma\|x^{(j-1)}-x_u^{(j)}\|_{\mathcal{S}}^2\\[2mm]
 		&+\frac{1}{4}\| ((1-\mu)x_l^{(j)}+\mu x_u^{(j)})-x^{(j)}\|_{\mathcal{S}}^2+{\mu s_{j-1}}(\mathcal{F}(x_u^{(j)})-\mathcal{F}^*)\geq0.
 	\end{align*}}
 	Let $\bar{x}$ be any point in $\hat{\mathcal{V}}\subseteq\mathcal{ V}$, and substitute the $x$ in \eqref{le4} of Lemma \ref{lemma4}  with $\bar{x}$. Notice that $\mu \in (0,1)$, we have
 	\begin{equation}\label{casea}
 		\begin{aligned}
 			&\frac{1}{2}\|\bar{x} - x^{(j)}\|_{\mathcal{S}}^2\\[2mm]
 			\geq & \frac{1}{2}\|\bar{x} - x^{(j+1)}\|_{\mathcal{S}}^2 + \Big(\frac{(1 - \mu)}{2} - \gamma\Big) \|x^{(j)} - x_l^{(j+1)}\|_{\mathcal{S}}^2 \\[2mm]
 			& + \Big(\frac{\mu}{2}(1 - \frac{s_jL_\delta}{\lambda_{\min}(\mathcal{S})}) - \gamma\Big) \|x^{(j)} - x_u^{(j+1)}\|_{\mathcal{S}}^2 \\[2mm]
 			& + \frac{1}{4} \|((1 - \mu)x_l^{(j+1)} + \mu x_u^{(j+1)}) - x^{(j+1)}\|_{\mathcal{S}}^2 \\[2mm]
 			& + \gamma \|x^{(j)} - x_u^{(j)}\|_{\mathcal{S}}^2 \\[2mm]
 			& + \frac{1}{4} \|((1 - \mu)x_l^{(j+1)} + \mu x_u^{(j+1)}) - x^{(j+1)}\|_{\mathcal{S}}^2 \\[2mm]
 			& + \gamma \|x^{(j)} - x_l^{(j)}\|_{\mathcal{S}}^2 \\[2mm]
 			& + \mu s_j (\mathcal{F}(x_u^{(j+1)}) - \mathcal{F}^*).
 		\end{aligned}
 	\end{equation}
 	For any $j \geq j_0$, it follows from\eqref{casea} and \cite[Lemma A.4]{LLZ2022} that $\lim_{j \to \infty} \|\bar{x} - x^{(j)}\|_{\mathcal{S}_x}^2$ exists, and that
 {\small	\begin{align*}
 		\sum_{j=0}^{\infty}&\Big[\Big(\frac{(1 - \mu)}{2} - \gamma\Big) \|x^{(j)} - x_l^{(j+1)}\|_{\mathcal{S}}^2\\
 		& + \frac{\mu}{2}\Big((1 - \frac{s_jL_\delta}{\lambda_{\min}(\mathcal{S})} ) - \gamma\Big) \|x^{(j)} - x_u^{(j+1)}\|_{\mathcal{S}}^2 \\
 		& + \frac{1}{4} \big\|\big((1 - \mu)x_l^{(j+1)} + \mu x_u^{(j+1)}\big) - x^{(j+1)}\big\|_{\mathcal{S}}^2 \\[2mm]
 		& + \gamma \|x^{(j)} - x_u^{(j)}\|_{\mathcal{S}}^2 \\
 		& + \frac{1}{4} \big\|\big((1 - \mu)x_l^{j+1} + \mu x_u^{j+1}\big) - x^{j+1}\big\|_{\mathcal{S}}^2 + \gamma \|x^{(j)} - x_l^{(j)}\|_{\mathcal{S}}^2 \\
 		& + \mu s_j (\mathcal{F}(x_u^{(j+1)}) - \mathcal{F}^*)\Big]<\infty.
 	\end{align*}}
 	Combining with the fact $\gamma<\min\big\{\frac{(1 - \mu)}{2}, \frac{\mu}{2}(1 - \frac{sL_\delta}{\lambda_{\min}(\mathcal{S})})\big\}$, we   get
 	$$
 	\begin{aligned}
 		&\sum_{j=0}^{\infty}\|x^{(j)}-x_u^{(j+1)}\|_{\mathcal{S}}^2<\infty,\quad
 		\sum_{j=0}^{\infty}\|x^{(j)}-x_l^{(j+1)}\|_{\mathcal{S}}^2<\infty,\\
 		&\sum_{j=0}^{\infty}\|((1-\mu)x_l^{(j+1)}+\mu x_u^{(j+1)})-x^{(j+1)}\|_{\mathcal{S}}^2<\infty,\\
 		&\sum_{j=0}^{\infty}s_j(\mathcal{F}(x_u^{(j+1)}-\mathcal{F}^*))<\infty.
 	\end{aligned}
 	$$
 	
 	With any fixed $\lambda^k\in\Lambda$, we now prove the existence of a sub-sequence $\{x^{(\tilde{j})}\} \subseteq \{x^{(j)}\}$ such that $\lim_{\tilde{j} \to \infty} \mathcal{F}(x^{(\tilde{j})}) \leq \mathcal{F}^*$. This clearly holds for every $\hat{j} > 0$, there exists some $j > \hat{j}$ such that $\mathcal{F}(x^{(j)}) \leq \mathcal{F}^*$.
 	Therefore, we only need to consider the case where there exists some $\hat{j} > 0$ such that $\mathcal{F}(x^{(j)}) > \mathcal{F}^*$ for all $j > \hat{j}$. If there does not exist a sequence $\{x^{(\tilde{j})}\} \subseteq \{x^{(j)}\}$ such that $\lim_{\tilde{j} \to \infty} \mathcal{F}(x^{(\tilde{j})}) \leq \mathcal{F}^*$, then there must exist a $\epsilon > 0$ and a $j_1 \geq \max\{\hat{j}, j_0\}$ such that $\mathcal{F}(x^{(j)}) - \mathcal{F}^* \geq 2\epsilon$ for all $j \geq j_1$.
 	
 	Since $\mathbb{X}$ is a convex compact set, it follows from Lemma \ref{lemma3} that the sequences $\{x^{(j)}\}$ and $\{x^{(j+1)}_u\}$ are bounded. Moreover, since $\mathcal{F}$ is continuous and $\mathcal{S}\succ 0$, we get $\lim_{j \to \infty} \|x^{(j)} - x_u^{(j+1)}\|_{\mathcal{S}}^2 = 0$. Thus, there exists $j_2 \geq j_1$ such that $|\mathcal{F}(x^{(j)}) - \mathcal{F}(x_u^{(j+1)})| < \epsilon$ for all $j \geq j_2$. Consequently, $\mathcal{F}(x_u^{(j+1)}) - \mathcal{F}^* \geq \epsilon$ for all $j \geq j_2$. Then we have
 	$$
 	\epsilon \sum_{j=j_2}^{\infty} s_j \leq \sum_{j=j_2}^{\infty} s_j (\mathcal{F}(x_u^{(j+1)}) - \mathcal{F}^*) < \infty,
 	$$
 	where the last inequality follows from $\sum_{j=0}^{\infty} (\mathcal{F}(x_u^{(j+1)}) - \mathcal{F}^*) < \infty$.
 	This result contradicts the fact that $\sum_{j=0}^{\infty}s_j= +\infty$. Hence, we have   proved the existence of a sequence ${x^{\tilde{j}}} \subseteq {x^j}$ such that $\lim_{\tilde{j} \to \infty} \mathcal{F}(x^{(\tilde{j})}) \leq \mathcal{F}^*$.
 	
 	Since the sequences $\{x^{(\tilde{j})}\}$ and $\{x^{(j+1)}_u\}$ are bounded and $\sum_{j=0}^{\infty}\|x^{(j)}-x_l^{(j+1)}\|_{\mathcal{S}}^2<\infty$, we can take a subsequence such that $\lim_{\tilde{j} \to \infty} x_l^{(\tilde{j}+1)} = \lim_{\tilde{j} \to \infty}x^{(\tilde{j})} = \tilde{x}$. By the continuity of $\mathcal{F}$, we have $\mathcal{F}(\tilde{x}) = \lim_{\tilde{j} \to \infty} \mathcal{F}(x^{(\tilde{j})}) \leq \mathcal{F}^*$. Furthermore, since $x_l^{(\tilde{j}+1)} = \mathcal{T}_{\lambda^k}(x^{(\tilde{j})})$, taking the limit as $\tilde{j} \to \infty$ and by the closedness of $\mathcal{T}_{\lambda^k}$ from Lemma \ref{lemma2}, we obtain
 	$$
 	\tilde{x} = \mathcal{T}_{\lambda^k}(\tilde{x}),
 	$$
 	hence $\tilde{x} \in \mathcal{V}$.
 	Combining with $\mathcal{F}(\tilde{x}) \leq \mathcal{F}^*$, we can get $\tilde{x} \in \hat{\mathcal{V}}$. Then, by
 	taking $\bar{x} = \tilde{x}$ and since $\lim_{j \to \infty} \|\bar{x} - x^{(j)}\|_{\mathcal{S}}^2$ exists,
 	we obtain $\lim_{j \to \infty} \|\bar{x} - x^{(j)}\|_{\mathcal{S}}^2 = 0$ due to $\mathcal{S} \succ 0$.
 	Consequently, it holds that
 	$$
 	\lim_{j \to \infty} \mathrm{dist}_{\mathcal{S}}(x^{(j)}, \hat{\mathcal{V}}) = 0, \quad \text{and} \quad
 	\lim_{j \to \infty} \mathcal{F}(x^{(j)}, \lambda^k) = \varphi(\lambda^k).
 	$$
 	This completes the proof in this case.
 	
 	{{Case (b):}} Suppose that the sequence $\{\nu_\kappa\}$ is infinite, i.e., for every $j_0\in\mathbb{N}$, there exists $j\geq j_0$ such that:
 	\begin{align*}
 		&\gamma\|x^{(j-1)}-x_l^{(j)}\|_{\mathcal{S}}^2\\
 		&+\gamma\|x^{(j-1)}-x_u^{(j)}\|_{\mathcal{S}}^2+\frac{1}{4}\| ((1-\mu)x_l^{(j)}&+\mu x_u^{(j)})-x^{(j)}\|_{\mathcal{S}}^2\\[2mm]
 		&+\mu s_{j-1}(\mathcal{F}(x_u^{(j)})-\mathcal{F}^*)<0.
 	\end{align*}
 	Let $x=\Pi_{\hat{\mathcal{V}}}^{\mathcal{S}}(x^{(j)})\subset \hat{\mathcal{V}}$, and take it into the inequality \eqref{le4} of Lemma \ref{lemma4}, we can get
 	$$
 	\begin{aligned}
 		&\frac{1}{2}\mathrm{dist}^2_{\mathcal{S}}(x^{(j)}, \hat{\mathcal{V}})\\[2mm]
 		\geq&\frac{1}{2}\mathrm{dist}^2_{\mathcal{S}}(x^{(j+1)}, \hat{\mathcal{V}})
 		+(\frac{(1 -\mu)}{2}-\gamma)\|x^{(j)} - x_l^{(j+1)}\|_{\mathcal{S}}^2\\[2mm]		
 		&+\Big(\frac{\mu}{2}(1-s_j\frac{L_\delta}{\lambda_{\min}({\mathcal{S}})})-\gamma\Big)\|x^{(j)}-x_u^{(j+1)}\|_{\mathcal{S}}^2\\[2mm]
 		&+\frac{1}{4}\Big\|((1-\mu)x_l^{(j+1)}+\mu x_u^{(j+1)})-x^{(j+1)}\|_{\mathcal{S}}^2\\[2mm]
 		&+\gamma\|x^{(j)}-x_u^{(j+1)}\Big\|_{\mathcal{S}}^2\\[2mm]
 		&+\frac{1}{4}\Big\|((1-\mu)x_l^{(j+1)}+\mu x_u^{(j+1)})-x^{(j+1)}\|_{\mathcal{S}}^2\\[2mm]
 		&+\gamma\|x^{(j)}-x_l^{(j+1)}\Big\|_{\mathcal{S}}^2\\[2mm]
 		&+\mu s_j\Big(\mathcal{F}(x_u^{(j+1)})-\mathcal{F}^*\Big).
 	\end{aligned}
 	$$
 	When $\nu_\kappa<\kappa $, we have
 	\begin{align*}			
 		&\gamma\|x^{(j)}-x_l^{(j+1)}\|_{\mathcal{S}}^2\\[2mm]
 		&+\gamma\|x^{(j)}-x_u^{(j+1)}\|_{\mathcal{S}}^2\\[2mm]
 		&+\frac{1}{4}\| ((1-\mu)x_l^{(j+1)}+\mu x_u^{(j+1)})-x^{(j+1)}\|_{\mathcal{S}}^2\\[2mm]
 		&+\mu s_j(\mathcal{F}(x_u^{(j+1)})-\mathcal{F}^*)\geq0
 	\end{align*}
 	holds for any $\nu_\kappa \leq j \leq \kappa-1$. Thus, for any $\nu_\kappa \leq j \leq \kappa-1$, we can obtain
 	$$\frac{1}{2}\mathrm{dist}^2_{\mathcal{S}}(x^{(j+1)}, \hat{\mathcal{V}})-\frac{1}{2}\mathrm{dist}^2_{\mathcal{S}}(x^{(j)}, \hat{\mathcal{V}})\leq0.
 	$$
 	Summing up the inequalities from $\nu_\kappa$ to $\kappa$, we obtain
 	\begin{equation}\label{21}
 		\frac{1}{2}\mathrm{dist}^2_{\mathcal{S}}(x^{(\kappa)}, \hat{\mathcal{V}})-\frac{1}{2}\mathrm{dist}^2_{\mathcal{S}}(x^{(\nu_\kappa)}, \hat{\mathcal{V}})\leq0.
 	\end{equation}
 	When $\nu_\kappa=\kappa$, the inequality \eqref{21} also  holds for $\mathrm{dist}_{\mathcal{S}}(x^{{(\kappa)}}, \hat{\mathcal{V}})=\mathrm{dist}_{\mathcal{S}}(x^{(\nu_\kappa)},\hat{\mathcal{V}})$. Thus, if we want to obtain $\lim_{k\to \infty}\mathrm{dist}_{\mathcal{S}}(x^{(\kappa)}, \hat{\mathcal{V}}) = 0$, it can be seen from inequality \eqref{21} that we only need to prove $\lim_{\kappa\to \infty}\mathrm{dist}_{\mathcal{S}}(x^{(\nu_\kappa)}, \hat{\mathcal{V}})= 0$.
 	
 	According to the definition of $\nu_\kappa$, for all $j \in \{\nu_\kappa\}$ with different $\kappa$, we have $\mathcal{F}^* >\mathcal{F}(x_u^{(j)})$.
 	Since $\mathbb{X}$ is a convex compact set, it follows from Lemma \ref{lemma3} that both $\{x^{(\nu_\kappa)}\}$ and $\{x_u^{(\nu_\kappa)}\}$
 	are all bounded, and therefore the set $\{\mathrm{dist}_{\mathcal{S}}(x^{(\nu_\kappa)}, \hat{\mathcal{V}})\}$ is also bounded.
 	Assuming that $\mathcal{F}$ has a lower bound $F_0$, we have
 	$$
 	0 \leq \mathcal{{F}}^* - \mathcal{F}(x_u^{(j)}) \leq \mathcal{F}^* - F_0.
 	$$
 	From the definition of $\nu_\kappa$, we  obtain
 	\begin{align*}
 		&\gamma\|x^{(j-1)}-x_u^{(j)}\|^2_{\mathcal{S}}\\[2mm]
 		&+\gamma\|x^{(j-1)}-x_l^{(j)}\|^2_{\mathcal{S}}
 		+\frac{1}{4}\| ((1-\mu)x_l^{(j)}+\mu x_u^{(j)})-x^{(j)}\|^2_{\mathcal{S}}\\[2mm]
 		<& \mu s_j(\mathcal{F}^*-\mathcal{F}(x_u^{(j)})) \leq \mu s_j(\mathcal{F}^*-F_0).
 	\end{align*}
 	Furthermore, since $\lim_{\kappa\to\infty}\nu_\kappa = +\infty$ and $\lim_{j \to \infty}s_j=0$, we can obtain
 	$$
 	\lim_{\kappa\to \infty}\|x^{(\nu_{\kappa-1})} - x_u^{(\nu_\kappa)}\|_{\mathcal{S}} = 0,
 	\quad
 	\lim_{\kappa\to \infty}\|x^{(\nu_{\kappa-1})} - x_l^{(\nu_\kappa)}\|_{\mathcal{S}} = 0,
 	$$
 	and
 	$$
 	\lim_{\kappa\to\infty}\|((1-\mu)x_l^{(\nu_\kappa)} + \mu x_u^{(\nu_\kappa)}) - x^{(\nu_\kappa)}\|_{\mathcal{S}} = 0.
 	$$
 	Let $\tilde{x}$ be an arbitrary limit point of $\{x^{(\nu_\kappa)}\}$, and $\{x^{(\hat{j})}\}$ be a subsequence of $\{x^{(\nu_\kappa)}\}$ such that $\lim_{\hat{j}\to\infty}x^{(\hat{j})}=\tilde{x}$. From the following inequalities
 	\begin{align*}
 		&\lim_{\kappa\to\infty}\|x^{(\nu_{\kappa-1})} - x^{(\nu_\kappa)}\|_{\mathcal{S}}\\[2mm]
 		=&\lim_{\kappa\to\infty}\|x^{(\nu_\kappa - 1)} - ((1-\mu)x_l^{(\nu_\kappa)} + \mu x_u^{(\nu_\kappa)}) \\[2mm]
 		&+ ((1-\mu)x_l^{(\nu_\kappa)} + \mu x_u^{(\nu_\kappa)}) - x^{(\nu_\kappa)}\|_{\mathcal{S}}\\[2mm]
 		\leq&\lim_{\kappa\to\infty}\|x^{(\nu_\kappa - 1)} - ((1-\mu)x_l^{(\nu_\kappa)} + \mu x_u^{(\nu_\kappa)})\|_{\mathcal{S}}\\[2mm]
 		& + \lim_{n\to\infty}\|((1-\mu)x_l^{(\nu_\kappa)} + \mu x_u^{(\nu_\kappa)}) - x^{(\nu_\kappa)}\|_{\mathcal{S}}\\[2mm]
 		\leq&(1-\mu)\lim_{\kappa\to\infty}\|x^{(\nu_\kappa - 1)}-x_l^{(\nu_\kappa)}\|_{\mathcal{S}}\\[2mm]
 		&+\mu\lim_{\kappa\to\infty}\|x^{(\nu_\kappa - 1)}- x_u^{(\nu_\kappa)})\|_{\mathcal{S}}\\[2mm]
 		&+ \lim_{\kappa\to\infty}\|((1-\mu)x_l^{(\nu_\kappa)} + \mu x_u^{(\nu_\kappa)}) - x^{(\nu_\kappa)}\|_{\mathcal{S}}=0.
 	\end{align*}
 	where $\mathcal{S}\succ0$, we can obtain that $\lim_{\hat{j}\to\infty}x^{(\hat{j}-1)} = \tilde{x}$.
 	Furthermore, because $\lim_{\hat{j}\to\infty}\|x^{(\hat{j}-1)} - x_l^{(\hat{j})}\|_{\mathcal{S}} = 0$, we have $\lim_{\hat{j}\to\infty}x_l^{(\hat{j})} = \tilde{x}$. Furthermore, because $x_l^{(\hat{j})} = \mathcal{T}_{\lambda^k}(x^{(\hat{j}-1)})$, taking the limit as $\hat{j} \to \infty$ and using the closedness of $\mathcal{T}_{\lambda^k}$ from Lemma \ref{lemma2}, we obtain
 	$$
 	\tilde{x} = \mathcal{T}_{\lambda^k}(\tilde{x}),
 	$$
 	hence $\tilde{x} \in \mathcal{V}$. Since for all $j \in \{\nu_\kappa\}$ with different $\kappa$, we have $\mathcal{F}^* > \mathcal{F}(x_u^{(j)})$, it follows that for all $\hat{j}$, it holds that $\mathcal{F}^* > F(x_u^{(\hat{j})})$.
 	Then, due to the continuity of $\mathcal{F}$ and the fact that $\lim_{n \to \infty} \|x_u^{(\nu_\kappa)} - x^{(\nu_\kappa)}\|_{\mathcal{S}} = 0$ with $\mathcal{S} \succ 0$, we have $\mathcal{F}^* > \mathcal{F}(\tilde{x})$, so we can get $\tilde{x} \in \hat{\mathcal{V}}$ and  $\lim_{\hat{j} \to \infty} \mathrm{dist}(x^{(\hat{j})}, \hat{\mathcal{V}}) = 0$.
 	Since we have proven that for any limit point $\tilde{x}$ of the sequence $\{x^{(\nu_\kappa)}\}$, it holds that $\tilde{x} \in \hat{\mathcal{V}}$. Then we can conclude that $\lim_{\kappa \to \infty} \mathrm{dist}_{\mathcal{S}}(x^{(\nu_\kappa)}, \hat{\mathcal{V}}) = 0$ from $\{x^{(\nu_\kappa)}\}$ and $\{\mathrm{dist}_{\mathcal{S}}(x^{(\nu_\kappa)}, \hat{\mathcal{V}})\}$ are all bounded. Therefore, it follows that
 	$$
 	\begin{aligned}
 		&\lim_{j \to \infty} \mathrm{dist}_{\mathcal{S}}(x^{(j)}, \hat{\mathcal{V}}) = 0\\[2mm]
 		&\lim_{j \to \infty} \mathcal{F}(x^{(j)}, \lambda^k) = \varphi(\lambda^k),
 	\end{aligned}
 	$$
 	this completes the proof   in the second case.
 \end{proof}
 
 Based on the above discussion, we can prove that the sequence $\{x^{(j)}\}$  generated by Step $1$ in Algorithm ADMM-BDA parameterized by $\lambda^k$ converges to the solution set of $\inf_{x\in \mathcal{V}\cap \mathbb{X}}\mathcal{F}(x, \lambda^k)$.
 Meanwhile, for any $\lambda^k\in \Lambda$, the sequence $\{x^{(j)}\}$ also converges uniformly with respect to $\|x^{(j)} - \mathcal{T}_{\lambda^k}(x^{(j)})\|^2_{\mathcal{S}}$.
 This implies that the convergence result does not depend on the parameter $\lambda$, and that, for any  $\lambda\in \Lambda$, the sequence $\{x^{(j)}(\lambda)\}$ (i.e., $x^{(j)}$ parametrized by $\lambda$) converges in the same manner.
 
 \begin{theorem}\label{th1}
 	Let $\{x^{(j)}(\lambda)\}$ be the sequence generated by Step $1$ in Algorithm ADMM-BDA parameterized by $\lambda$ with $s_j = \frac{1}{j+1}$ and $s \in \big(0, \frac{\lambda_{\min}(\mathcal{S})}{{L_\delta}}\big)$, where $\lambda_{\min}(\mathcal{S})$ denotes the smallest eigenvalue of the matrix $\mathcal{S}$. Then, we have\\
 	\begin{enumerate}
 		\item[(a)] For any $\lambda \in \Lambda$, it holds that
 		$$
 		\begin{aligned}
 			&\lim_{j \to \infty} \mathrm{dist}\big(x^{(j)}(\lambda), \mathcal{T}_{\lambda}(x^{(j)}(\lambda))\big) = 0,\\[2mm]
 			&\lim_{j \to \infty} \mathcal{F}\big(x^{(j)}(\lambda), \lambda\big) = \varphi(\lambda).
 		\end{aligned}
 		$$
 		\item[(b)] Furthermore, there exists a constant $C > 0$ such that for any $\lambda \in \Lambda$, it holds that
 		$$
 		\|x^{(j)}(\lambda) - \mathcal{T}_{\lambda}(x^{(j)}(\lambda))\|_{\mathcal{S}} \leq C \sqrt{\frac{1 + \ln(1 + j)}{j^{{1}/{4}}}}.
 		$$
 	\end{enumerate}
 \end{theorem}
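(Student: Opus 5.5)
Part (a) follows from Proposition \ref{pro1}, and part (b) needs a separate rate argument built on Lemma \ref{lemma4}. The plan for (b) is to obtain an $O(\ln j/\sqrt{j})$-type bound on $\sum$ of squared residuals weighted by $s_j$, and then pass to the residual at time $j$ itself. That last passage is what produces the $j^{-1/4}$ exponent under the square root.

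\textbf{Part (a).} For each fixed $\lambda\in\Lambda$ the hypotheses of Proposition \ref{pro1} hold, since $s_j=s/(j+1)\to 0$ and $\sum_j s_j=\infty$. Hence $\mathrm{dist}_{\mathcal{S}}(x^{(j)}(\lambda),\hat{\mathcal V}(\lambda))\to 0$ and $\mathcal{F}(x^{(j)}(\lambda),\lambda)\to\varphi(\lambda)$. For the residual, let $p_j=\Pi^{\mathcal S}_{\hat{\mathcal V}}(x^{(j)})\in\mathcal V$. Lemma \ref{lemma2}(a) shows that $p_j$ is a fixed point of $\mathcal{T}_\lambda$. Nonexpansiveness then gives
\[
\|x^{(j)}-\mathcal{T}_\lambda(x^{(j)})\|_{\mathcal S}\le \|x^{(j)}-p_j\|_{\mathcal S}+\|\mathcal{T}_\lambda(p_j)-\mathcal{T}_\lambda(x^{(j)})\|_{\mathcal S}\le 2\,\mathrm{dist}_{\mathcal S}(x^{(j)},\hat{\mathcal V})\to0,
\]
and norm equivalence ($\mathcal S\succ0$) yields (a).

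\textbf{Part (b), summed bound.} Fix $\bar x\in\hat{\mathcal V}(\lambda)$ and apply Lemma \ref{lemma4} with $x=\bar x$. Dropping the nonnegative terms other than $\frac{1-\mu}{2}\|x^{(j)}-x_l^{(j+1)}\|^2_{\mathcal S}$ and rearranging gives
\[
\tfrac{1-\mu}{2}\|x^{(j)}-\mathcal{T}_\lambda(x^{(j)})\|^2_{\mathcal S}\le \tfrac12\|\bar x-x^{(j)}\|^2_{\mathcal S}-\tfrac12\|\bar x-x^{(j+1)}\|^2_{\mathcal S}+\mu s_j\big(\mathcal F(\bar x)-\mathcal F(x_u^{(j+1)})\big).
\]
The last term is bounded by $\mu s_j(M-F_0)$, where $M=\max_{\mathbb X\times\Lambda}\mathcal F$ is finite by continuity and compactness (Assumption \ref{ass1}). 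To keep $\mathcal F(x_u^{(j+1)})$ inside a compact set, I use the boundedness from Lemma \ref{lemma3}(c). These bounds are uniform in $\lambda$ because $\mathbb X$ and $\Lambda$ are compact and the bound in Lemma \ref{lemma3}(c) involves only $\mathrm{diam}_{\mathcal S}(\mathbb X)$ and $\sup\|\nabla\mathcal F\|$.

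Summing from $j=0$ to $J$ telescopes the distance terms. With $D=\mathrm{diam}_{\mathcal S}(\mathbb X)$ this gives
\[
\sum_{j=0}^J\|x^{(j)}-\mathcal{T}_\lambda(x^{(j)})\|^2_{\mathcal S}\le C_1\big(1+\ln(1+J)\big),
\]
where $C_1$ depends only on $D$, $\mu$, $s$ and $M-F_0$, and not on $\lambda$.

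\textbf{Part (b), passing to the last iterate (main obstacle).} A small average only gives a small $\min$ over $j\le J$, so I need near-monotonicity of the residual $r_j:=\|x^{(j)}-\mathcal{T}_\lambda(x^{(j)})\|_{\mathcal S}$. Write $x^{(j+1)}=\Pi(\mathcal{T}_\lambda x^{(j)})+e_j$. The perturbation comes from the $\mu$-mixing with $x_u^{(j+1)}$ and from the projection, and satisfies $\|e_j\|\le C_2\big(\mu r_j+s_j\big)$. I will control the change $r_{j+1}-r_j$ using firm nonexpansiveness of $\mathcal{T}_\lambda$ together with this perturbation bound, aiming for
\[
r_{j+1}^2\le r_j^2+C_3\,\big(s_j+\|x^{(j+1)}-x^{(j)}\|_{\mathcal S}\big).
\]
The increments $\|x^{(j+1)}-x^{(j)}\|_{\mathcal S}$ are controlled by the summed bound via Cauchy--Schwarz, since $\sum_{i=m}^{j}\|x^{(i+1)}-x^{(i)}\|_{\mathcal S}\le \sqrt{j-m}\,\sqrt{C_1(1+\ln j)}$.

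Now pick the window $m=j-\lceil\sqrt j\rceil$. By averaging over the window there is some $i\in[m,j]$ with $r_i^2\le C_1(1+\ln j)/\sqrt j$. The accumulated increase from $i$ to $j$ is then bounded by $C_3\big(\sum s_i+j^{1/4}\sqrt{C_1(1+\ln j)}\big)$. Balancing these contributions should give $r_j^2\lesssim (1+\ln(1+j))/j^{1/4}$; this balancing step is the delicate part and may need the window exponent retuned. Taking square roots gives (b), with $C$ independent of $\lambda$.
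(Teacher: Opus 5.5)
Your part (a) agrees with the paper, which reads it off from Proposition~\ref{pro1}. Your triangle-inequality step is a reasonable addition. One correction: that $\Pi^{\mathcal S}_{\hat{\mathcal V}}(x^{(j)})$ is a fixed point of $\mathcal T_\lambda$ is not what Lemma~\ref{lemma2}(a) says. It is the inclusion $\mathcal V(\lambda)\subseteq\mathrm{Fix}\,\mathcal T_\lambda$, which the paper uses tacitly in Lemma~\ref{lemma3}(a). Your summed bound $\sum_{j\le J}r_j^2\le C_1(1+\ln(1+J))$, uniform in $\lambda$, is also correct.

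The gap is the passage to the last iterate. The paper never carries this out itself: it notes that $\mathcal F$ is uniformly bounded on $\mathbb X\times\Lambda$ and invokes \cite[Proposition A.2]{LLZ2022}. Your target inequality $r_{j+1}^2\le r_j^2+C_3(s_j+\|x^{(j+1)}-x^{(j)}\|_{\mathcal S})$ is essentially free, since it follows from nonexpansiveness of $\mathcal I-\mathcal T_\lambda$ and boundedness, and it is too weak. Let $G:=\sup_{\mathbb X\times\Lambda}\|\mathcal S^{-1}\nabla_x\mathcal F\|_{\mathcal S}$. Since $\|x^{(j+1)}-x^{(j)}\|_{\mathcal S}\le(1-\mu)r_j+\mu s_jG$, the increase accumulated over a window of $w$ indices is of the order of the window sum of the $r_l$. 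Cauchy--Schwarz against your global bound gives only $\sqrt w\,\sqrt{C_1(1+\ln j)}$. Because $\sum_{l\le j}r_l^2$ does not decay, this estimate is never smaller than $\sqrt{C_1}$. With $w\approx\sqrt j$ it equals $j^{1/4}\sqrt{C_1(1+\ln j)}\to\infty$, so your balancing gives a divergent bound, and no retuning of the window exponent fixes it. A window argument on $r_j$ needs genuine near-monotonicity $r_{j+1}\le r_j+O(s_j)$ with no increment term. That holds for the unprojected Krasnosel'skii--Mann-type step, by firm nonexpansiveness of $\mathcal I-\mathcal T_\lambda$. The projection $\Pi^{\mathcal S}_{\mathbb X}$ destroys it, and your proposal does not deal with this.

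One way to close the gap is to run the monotonicity argument on the step length $d_j:=\|x^{(j+1)}-x^{(j)}\|_{\mathcal S}$ instead. Write $x^{(j+1)}=\mathcal G_j(x^{(j)})$ with $\mathcal G_j:=\Pi^{\mathcal S}_{\mathbb X}\circ\big(\mu(\mathcal I-s_j\mathcal S^{-1}\nabla_x\mathcal F)+(1-\mu)\mathcal T_\lambda\big)$.
\begin{enumerate}
\item Each $\mathcal G_j$ is $\|\cdot\|_{\mathcal S}$-nonexpansive by Lemmas~\ref{lemma2}(a) and \ref{lemma3}(b), and $\|\mathcal G_{j+1}(x)-\mathcal G_j(x)\|_{\mathcal S}\le\mu G(s_j-s_{j+1})$. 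Hence $d_{j+1}\le d_j+\mu G(s_j-s_{j+1})$, with summable perturbations.
\item Combine this with $d_j\le(1-\mu)r_j+\mu s_jG$ and your summed bound. Averaging over $[J/2,J]$ gives $d_J\le C'\sqrt{(1+\ln(1+J))/J}$.
\item Apply Lemma~\ref{lemma4} with $x=\bar x\in\hat{\mathcal V}(\lambda)$, together with $\tfrac12\big(\|\bar x-x^{(j)}\|_{\mathcal S}^2-\|\bar x-x^{(j+1)}\|_{\mathcal S}^2\big)\le D\,d_j$. This yields $\tfrac{1-\mu}{2}r_j^2\le D\,d_j+\mu s_j(M-F_0)$.
\end{enumerate}
Hence $r_J=O\big(((1+\ln(1+J))/J)^{1/4}\big)$, with constants depending only on $D,G,M-F_0,\mu,s$. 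This is uniform in $\lambda$ and implies the stated bound.
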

 \begin{proof}
 	The claim (a) can be immediately obtained from the proof process in Proposition \ref{pro1}. Since both $\mathbb{X}$ and $\Lambda$ are compact set and $\mathcal{F}(x, \lambda)$ is continuous on $\mathbb{X} \times \Lambda$, we get that $\mathcal{F}(x, \lambda)$ is uniformly bounded on $\mathbb{X} \times \Lambda$, and that $\varphi(\lambda) = \inf_{x \in V \cap \mathbb{X}} \mathcal{F}(x, \lambda)$ is bounded on $\Lambda$.
 	Combining with the assumptions, we can conclude that $\mathcal{F}(x, \lambda)$ is bounded by $F_0$ on $\mathbb{X} \times \Lambda$. From \cite[Proposition A.2]{LLZ2022}, it follows that there exists a constant $C > 0$ such that for any $\lambda \in \Lambda$, we have
 	$$
 	\|x^{(j)}(\lambda) - \mathcal{T}_{\lambda}(x^{(j)}(\lambda))\|_{\mathcal{S}} \leq C \sqrt{\frac{1 + \ln(1 + j)}{j^{{1}/{4}}}},
 	$$
 	which shows the claim (b) holds.
 \end{proof}
 
 Based on the uniform convergence results of the sequence $\{x^{(j)}(\lambda)\}$, and inspired by the work of Liu et al. \cite{LMY2020}, we analyze the convergence of Algorithm ADMM-BDA for \eqref{model1} with respect to variables both $x$ and $\lambda$.

 \begin{proposition}\label{pro2}
 	Assume that both $\mathbb{X}$ and $\Lambda$ are convex and compact. Suppose that for any $\lambda \in \Lambda$, $\{x^{(J_k)}(\lambda)\} \subset \mathbb{X}$, and that, for any $\epsilon > 0$, there exists a $j(\epsilon) > 0$ such that when $j > j(\epsilon)$ it holds that
 	$$
 	\sup_{\lambda \in \Lambda} \|x^{(j)}(\lambda)- \mathcal{T}_{\lambda}(x^{(j)}(\lambda))\| \leq \epsilon.
 	$$
 	Meanwhile,  suppose for every $\lambda \in \Lambda$, it holds that
 	$$
 	\lim_{J_k\to \infty} \varphi_{J_k}(\lambda) \to \varphi(\lambda),
 	$$
 	where $\varphi_{J_k}(\lambda)$ and $x^{(J_k)}(\lambda)$ have been defined in \eqref{model2} with regarding to an integer $J$.
 	Let $\lambda^{k} \in \arg\min_{\lambda \in \Lambda} \{\varphi_{J_k}(\lambda)\}$. Then we have:
 	\begin{enumerate}
 		\item[(a)] As $J_k\to\infty$, any limit point $(\bar{x}, \bar{\lambda})$ of the sequence $\{x^{(J_k)}({\lambda^{k}}), \lambda^{k}\}$ satisfies $\bar{\lambda} \in \arg\min_{\lambda \in \Lambda} \varphi(\lambda)$ and $\bar{x} = \mathcal{T}_{\bar{\lambda}}(\bar{x})$.
 		\item[(b)] As $J_k \to \infty$, it also holds that $\inf_{\lambda \in \Lambda} \varphi_{J_k}(\lambda) \to \inf_{\lambda \in \Lambda} \varphi(\lambda)$.
 	\end{enumerate}
 \end{proposition}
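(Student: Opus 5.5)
The plan follows the standard BDA argument of Liu et al.~\cite{LMY2020}: upper semicontinuity of $\varphi$ plus epi-convergence of $\varphi_{J_k}$. Throughout we use that $x^{(J_k)}(\lambda)\in\mathbb{X}$ (compact), that $\mathcal{F}$ is continuous, hence uniformly continuous and bounded on $\mathbb{X}\times\Lambda$, and the closedness of the map $(x,\lambda)\mapsto\mathcal{T}_\lambda(x)$.

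\emph{Step 1 (fixed point of the limit).} Let $(\bar x,\bar\lambda)$ be a limit point, and pass to a subsequence with $x^{(J_k)}(\lambda^k)\to\bar x$ and $\lambda^k\to\bar\lambda$. By the uniform hypothesis, $\|x^{(J_k)}(\lambda^k)-\mathcal{T}_{\lambda^k}(x^{(J_k)}(\lambda^k))\|\le\epsilon$ for all large $k$. Hence $\mathcal{T}_{\lambda^k}(x^{(J_k)}(\lambda^k))\to\bar x$ as well. To conclude $\bar x=\mathcal{T}_{\bar\lambda}(\bar x)$ we need closedness of the graph of $\mathcal{T}$ \emph{jointly} in $(x,\lambda)$. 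Lemma~\ref{lemma2}(b) only gives closedness for fixed $\lambda$, so this is the main obstacle. I would re-run the proof of Lemma~\ref{lemma1}(c) with $\lambda$ varying. The inclusion $\mathcal{S}(x-\mathcal{T}_\lambda x)\in\partial_x\mathcal{R}(\mathcal{T}_\lambda x,\lambda)+\cdots$ passes to the limit because $\partial_x\mathcal{R}(\cdot,\lambda)=\sum_i\lambda_i\partial\mathcal{R}_i(\cdot)$ depends linearly on $\lambda$. Outer semicontinuity of each $\partial\mathcal{R}_i$ then gives closedness of $(x,\lambda,u)\mapsto u\in\partial_x\mathcal{R}(x,\lambda)$, after using boundedness of the subgradient sequences (from the bounded iterates). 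Alternatively, since $\mathcal{T}_\lambda$ is a proximal-type map, I would show it is continuous in $\lambda$ directly. Once $\bar x=\mathcal{T}_{\bar\lambda}(\bar x)$ holds, the fixed-point characterization used in Proposition~\ref{pro1} gives $\bar x\in\mathcal{V}(\bar\lambda)$, and also $\bar x\in\mathbb{X}$ by closedness.

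\emph{Step 2 (optimality of $\bar\lambda$).} First, for any $\lambda\in\Lambda$, minimality of $\lambda^k$ gives $\mathcal{F}(x^{(J_k)}(\lambda^k),\lambda^k)=\varphi_{J_k}(\lambda^k)\le\varphi_{J_k}(\lambda)$. Letting $k\to\infty$ and using continuity of $\mathcal{F}$ on the left and the pointwise convergence $\varphi_{J_k}(\lambda)\to\varphi(\lambda)$ on the right, we get $\mathcal{F}(\bar x,\bar\lambda)\le\varphi(\lambda)$. Second, since $\bar x\in\mathcal{V}(\bar\lambda)\cap\mathbb{X}$, the definition of $\varphi$ gives $\varphi(\bar\lambda)\le\mathcal{F}(\bar x,\bar\lambda)$. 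Combining the two, $\varphi(\bar\lambda)\le\varphi(\lambda)$ for all $\lambda\in\Lambda$, which proves (a). As a byproduct, $\mathcal{F}(\bar x,\bar\lambda)=\varphi(\bar\lambda)=\min_\Lambda\varphi$.

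\emph{Step 3 (convergence of optimal values, part (b)).} The upper bound is immediate: $\inf_\Lambda\varphi_{J_k}\le\varphi_{J_k}(\lambda)$ for each fixed $\lambda$, so $\limsup_k\inf_\Lambda\varphi_{J_k}\le\varphi(\lambda)$, and taking the infimum over $\lambda$ gives $\limsup_k\inf_\Lambda\varphi_{J_k}\le\inf_\Lambda\varphi$. For the lower bound, pick a subsequence realizing $\liminf_k\inf_\Lambda\varphi_{J_k}=\liminf_k\varphi_{J_k}(\lambda^k)$. By compactness of $\mathbb{X}\times\Lambda$, extract a further convergent subsequence of $(x^{(J_k)}(\lambda^k),\lambda^k)$. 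Step 2 then gives $\lim\varphi_{J_k}(\lambda^k)=\mathcal{F}(\bar x,\bar\lambda)\ge\varphi(\bar\lambda)\ge\inf_\Lambda\varphi$. Thus the $\liminf$ and $\limsup$ coincide with $\inf_\Lambda\varphi$, completing (b).
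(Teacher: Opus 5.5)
Your proof is correct and follows the paper's argument. You pass to a convergent subsequence, obtain $\bar x=\mathcal{T}_{\bar\lambda}(\bar x)$ from the uniform residual bound plus closedness, and chain $\varphi(\bar\lambda)\le\mathcal{F}(\bar x,\bar\lambda)\le\varphi(\lambda)$ via minimality of $\lambda^k$ and pointwise convergence of $\varphi_{J_k}$. For (b) you use the same $\limsup$ bound and a compactness lower bound, which the paper phrases as a contradiction with a $\delta$-gap.

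You are more careful than the paper in Step 1. The paper only invokes closedness of $\mathcal{T}_{\lambda^i}$ for fixed $\lambda$ (Lemma~\ref{lemma2}(b)) when passing to $\mathcal{T}_{\bar\lambda}$. You rightly note that closedness of the graph jointly in $(x,\lambda)$ is what is needed. You also note that, once $\mathcal{T}_{\lambda^k}(x^{(J_k)}(\lambda^k))\to\bar x$, graph-closedness suffices rather than continuity.
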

 \begin{proof}
 	(a) Suppose  for any limit point $(\bar{x}, \bar{\lambda})$ of the sequence $\{x^{(J_k)}({\lambda^{k}}), \lambda^{k}\}$, let $\mathcal{K}:=\{1,2,\ldots\}$ and $\{(x^{(J_i)}({\lambda^{i}}), \lambda^{i})\}_{i\in\mathcal{K}}$ be a subsequence of $\{(x^{(J_k)}({\lambda^{k}}), \lambda^{k})\}$ such that $x^{(J_i)} ({\lambda^{i}}) \to \bar{x} \in \mathbb{X}$ and $\lambda^{i} \to \bar{\lambda} \in \Lambda$.
 	According to the assumption that, for any $\epsilon > 0$, there exists $j(\epsilon) > 0$ such that for any $i > j(\epsilon)$, we have
 	$$
 	\|x^{(J_i)}({\lambda^{i}}) - \mathcal{T}_{\lambda^{i}}(x^{(J_i)})\| \leq \epsilon.
 	$$
 	Since $\mathcal{T}_{\lambda^{i}}$ is closed on $\mathbb{X}$, we get
 	$$
 	\|\bar{x} - \mathcal{T}_{\bar{\lambda}}(\bar{x})\| \leq \epsilon.
 	$$
 	Because $\epsilon$ is an arbitrarily chosen, we conclude that $\bar{x} = \mathcal{T}_{\bar{\lambda}}(\bar{x})$.
 	Next, since $\mathcal{F}$ is continuous at $(\bar{x}, \bar{\lambda})$, for any $\epsilon > 0$, there exists $j(\epsilon) > 0$ such that for any $i > j(\epsilon)$, the following inequality holds:
 	$$
 	\mathcal{F}(\bar{x}, \bar{\lambda}) \leq \mathcal{F}\big(x^{(J_i)}(\lambda^{i}), \lambda^{i}\big) + \epsilon.
 	$$
 	That is, for any $i > j(\epsilon)$ and $\lambda \in \Lambda$, it holds that
 	\begin{equation}\label{pp31}
 		\begin{aligned}
 			\varphi(\bar{\lambda}) &= \inf_{x \in{\mathcal{V}} \cap \mathbb{X}} \mathcal{F}(x, \bar{\lambda}) \leq \mathcal{F}(\bar{x}, \bar{\lambda}) \leq \mathcal{F}(x^{(J_i)}_{\lambda^{i}}, \lambda^{i}) + \epsilon \\[2mm]
 			&= \varphi_{J_i}(\lambda^i) + \epsilon \leq \varphi_{J_i}(\lambda) + \epsilon.
 		\end{aligned}
 	\end{equation}
 	Letting $i \to \infty$, by assumption, we obtain for any $\lambda \in \Lambda$ that
 	$$
 	\varphi(\bar{\lambda}) \leq \lim_{i \to \infty} \varphi_{J_i}(\lambda) + \epsilon = \varphi(\lambda) + \epsilon.
 	$$
 	Taking $\epsilon \to 0$, we have
 	$$
 	\varphi(\bar{\lambda}) \leq \varphi(\lambda), \quad \forall \ \lambda \in \Lambda,
 	$$
 	which also implies $\bar{\lambda} \in \arg\min_{\lambda \in \Lambda} \varphi(\lambda)$.

 	(b) For any $\lambda \in \Lambda$, it holds that $\inf_{\lambda \in \Lambda} \varphi_{J_k}(\lambda) \leq \varphi_{J_k}(\lambda)$. Letting $J_k \to \infty$ and using the relation $\lim_{J_k \to \infty} \varphi_{J_k}(\lambda) \to \varphi(\lambda)$, we obtain
 	$$
 	\lim_{J_k \to \infty} \sup \Big\{ \inf_{\lambda \in \Lambda} \varphi_{J_k}(\lambda)\Big\} \leq \varphi(\lambda), \quad \forall \lambda \in \Lambda.
 	$$
 	Therefore,
 	$$
 	\lim_{J_k \to \infty} \sup \Big\{ \inf_{\lambda \in \Lambda} \{\varphi_{J_k}(\lambda)\} \Big\} \leq \inf_{\lambda \in \Lambda} \varphi(\lambda).
 	$$
 	Thus, as $J_k\to\infty$, if the claim $\inf_{\lambda \in \Lambda} \varphi_{J_k}(\lambda) \to \inf_{\lambda \in \Lambda} \varphi(\lambda)$ does not hold, there exists a $\delta > 0$ and a subsequence $\{\lambda^{l}\}_{l\in\mathcal{K}}$ of $\{\lambda^k\}$ such that
 	\begin{equation}\label{pp32}
 		\inf_{\lambda \in \Lambda} \varphi_{J_l}(\lambda) < \inf_{\lambda \in \Lambda} \varphi(\lambda) - \delta, \quad \forall \ l.
 	\end{equation}
 	Since $\Lambda$ is compact, without loss of generality, we consider the subsequence $\lambda^l \to \bar{\lambda} \in \Lambda$.
 	Moreover, by \eqref{pp31}, for any $\epsilon > 0$, there exists $j(\epsilon) > 0$ such that for all $l > j(\epsilon)$, it holds that
 	$$
 	\varphi(\bar{\lambda}) \leq \varphi_{J_l}(\lambda^l) + \epsilon.
 	$$
 	Letting $l \to \infty$ and $\epsilon \to 0$, and by the definition of $\lambda^l$, we have
 	$$
 	\inf_{\lambda \in \Lambda} \varphi(\lambda) = \varphi(\bar{\lambda}) \leq \lim_{l \to \infty} \inf \Big\{ \inf_{\lambda \in \Lambda} \varphi_{J_l}(\lambda) \Big\}.
 	$$
 	This leads to a contradiction with \eqref{pp32}. Therefore, as $J_k \to \infty$, $\inf_{\lambda \in \Lambda} \varphi_{J_k}(\lambda) \to \inf_{\lambda \in \Lambda} \varphi(\lambda)$.
 \end{proof}
 
 From proposition \ref{pro2} and Theorem \ref{th1}, we can derive the following theorem.
 \begin{theorem}\label{th2}
 	Let $\{x^{(j)}({\lambda})\}$ be the sequence generated by Step $1$ in Algorithm ADMM-BDA parameterized by $\lambda$, with $s_j = \frac{s}{j+1}$ and $s \in \big(0, \frac{\lambda_{\min}(\mathcal{S})}{{L_\delta}}\big)$. Then, let {$\lambda^{k} \in \arg\min_{\lambda \in \Lambda} \varphi_{J_k}(\lambda)$}. We have:
 	\begin{enumerate}
 		\item[(a)]  Any limit point $(\bar{x}, \bar{\lambda})$ of the sequence $\{x^{(J_k)}({\lambda^{k}}), \lambda^{k}\}$ is a solution to  \eqref{model1}, i.e., $\bar{\lambda} \in \arg\min_{\lambda \in \Lambda} \varphi(\lambda)$ and $\bar{x} = \mathcal{T}_{\bar{\lambda}}(\bar{x})$.\\
 		\item[(b)]  As $J_k \to \infty$, it holds that $\inf_{\lambda \in \Lambda} \varphi_{J_k}(\lambda) = \inf_{\lambda \in \Lambda} \varphi(\lambda)$.
 	\end{enumerate}
 \end{theorem}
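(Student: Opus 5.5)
The plan is to obtain Theorem \ref{th2} as a direct instance of Proposition \ref{pro2}. The work is to verify, under the step-size choice $s_j=\frac{s}{j+1}$ with $s\in\big(0,\frac{\lambda_{\min}(\mathcal{S})}{L_\delta}\big)$, each hypothesis of that proposition, using Theorem \ref{th1}.

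\textbf{Step 1: boundedness and feasibility.} $\mathbb{X}$ and $\Lambda$ are convex and compact by Assumption \ref{ass2}. Every iterate $x^{(j)}(\lambda)$ is produced by the projection $\Pi^{\mathcal{S}}_{\mathbb{X}}$ in \eqref{xlambda}, so $\{x^{(J_k)}(\lambda)\}\subset\mathbb{X}$ for every $\lambda\in\Lambda$. This settles the first hypothesis of Proposition \ref{pro2}.

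\textbf{Step 2: uniform fixed-point residual.} This is the main obstacle, and I would take it from Theorem \ref{th1}(b). That result gives a constant $C>0$, independent of $\lambda$, with
$$\|x^{(j)}(\lambda)-\mathcal{T}_\lambda(x^{(j)}(\lambda))\|_{\mathcal{S}}\le C\sqrt{\tfrac{1+\ln(1+j)}{j^{1/4}}}.$$
The right-hand side tends to $0$ as $j\to\infty$, because $\ln(1+j)/j^{1/4}\to0$. Since $\mathcal{S}\succ0$, we have $\|\cdot\|\le\lambda_{\min}(\mathcal{S})^{-1/2}\|\cdot\|_{\mathcal{S}}$. Hence for each $\epsilon>0$ there is a $j(\epsilon)$ such that $\sup_{\lambda\in\Lambda}\|x^{(j)}(\lambda)-\mathcal{T}_\lambda(x^{(j)}(\lambda))\|\le\epsilon$ for all $j>j(\epsilon)$. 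The delicate point is that $C$ really is uniform over $\Lambda$. I would justify this from the uniform bound on $\mathcal{F}$ on the compact set $\mathbb{X}\times\Lambda$ (continuity of $\mathcal{F}$), the lower bound $F_0$, the boundedness of $\varphi$ on $\Lambda$, and the diameter of $\mathbb{X}$. These are exactly the constants entering \cite[Proposition A.2]{LLZ2022}, as already noted in the proof of Theorem \ref{th1}.

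\textbf{Step 3: pointwise convergence of $\varphi_{J_k}$.} Fix $\lambda\in\Lambda$. Theorem \ref{th1}(a), which comes from Proposition \ref{pro1}, gives $\mathcal{F}(x^{(j)}(\lambda),\lambda)\to\varphi(\lambda)$. The step sizes $s_j=s/(j+1)$ satisfy $s_j\to0$ and $\sum s_j=\infty$, and they lie in the range required there. Restricting to the subsequence $j=J_k\to\infty$ gives $\varphi_{J_k}(\lambda)=\mathcal{F}(x^{(J_k)}(\lambda),\lambda)\to\varphi(\lambda)$. This uses the nonemptiness of $\hat{\mathcal V}(\lambda)$, which follows from compactness of $\mathbb{X}$, continuity of $\mathcal{F}$, and closedness of $\mathcal{V}(\lambda)$ (the solution set of a closed convex problem), assuming $\mathcal{V}(\lambda)\cap\mathbb{X}\neq\emptyset$ as implicitly required.

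\textbf{Step 4: conclusion.} With $\lambda^k\in\arg\min_\Lambda\varphi_{J_k}$, Proposition \ref{pro2}(a) now says that any limit point $(\bar x,\bar\lambda)$ satisfies $\bar\lambda\in\arg\min_\Lambda\varphi$ and $\bar x=\mathcal{T}_{\bar\lambda}(\bar x)$. The fixed-point identity means $\bar x\in\mathcal{V}(\bar\lambda)$: it is the optimality condition of the lower-level problem, as used in Proposition \ref{pro1}. Together with $\bar\lambda\in\arg\min_\Lambda\varphi$, this shows $(\bar x,\bar\lambda)$ solves \eqref{model1}. Proposition \ref{pro2}(b) gives $\inf_\Lambda\varphi_{J_k}\to\inf_\Lambda\varphi$, which is claim (b) read as a limit.
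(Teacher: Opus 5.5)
Your proposal follows the paper's proof: you check the hypotheses of Proposition~\ref{pro2} and then read off both claims. Those hypotheses are that the iterates stay in $\mathbb{X}$ (by the projection in \eqref{xlambda}), that the residual bound of Theorem~\ref{th1}(b) holds uniformly in $\lambda$, and that $\varphi_{J_k}(\lambda)\to\varphi(\lambda)$ pointwise by Theorem~\ref{th1}(a). Your extra details are correct and only make explicit what the paper leaves implicit: converting from $\|\cdot\|_{\mathcal{S}}$ to the Euclidean norm via $\lambda_{\min}(\mathcal{S})$, checking that $s_j=s/(j+1)$ meets the step-size requirements of Proposition~\ref{pro1}, and reading (b) as a limit.
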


 \begin{proof}
 	From Theorem \ref{th1}, we know that there exists a constant $C > 0$ such that for every $\lambda \in \Lambda$
 	$$
 	\|x^{(j)}({\lambda}) - \mathcal{T}_{\lambda}(x^{(j)})\|_{\mathcal{S}} \leq C \sqrt{\frac{1 + \ln(1 + j)}{j^{{1}/{4}}}}.
 	$$
 	Because $\lim_{j \to \infty} \sqrt{\frac{1 + \ln(1 + j)}{j^{{1}/{4}}}} = 0$ and based on the relation \eqref{xlambda}, we know that the condition (a) in Proposition \ref{pro2} is satisfied. Next, it follows from Theorem \ref{th1} that as $ J_k \to \infty $, for every $ \lambda \in \Lambda$, $\varphi_{J_k}(\lambda) = \mathcal{F}(x^{(J_k)}(\lambda), \lambda) \to \varphi(\lambda) $. Therefore, the condition  (b) in Proposition \ref{pro2} is also satisfied.
 	Hence, the claims of the theorem can be followed immediately from Proposition \ref{pro2}.
 \end{proof}
 
 The theorem demonstrates the convergence of the ADMM-BDA algorithm when using a diminishing step size defined as $s_j = \frac{s}{j+1}$. It establishes that every limit point $(\bar{x}, \bar{\lambda})$ from the sequence $\{x^{(J_k)}(\lambda^{k}), \lambda^{k}\}$ corresponds to a solution of the original bilevel problem \eqref{model0}. Additionally, as the number of lower-level iterations $J_k$ increases, the optimal value of the approximate upper-level problem $\inf_{\lambda \in \Lambda} \varphi_{J_k}(\lambda)$ approaches the true optimum $\inf_{\lambda \in \Lambda} \varphi(\lambda)$.
 
\section{Experimental Evaluation of Algorithms on Synthetic and Real-World Data}\label{num}
In this section, we demonstrate the advantages of the proposed ADMM-BDA algorithm over other classical hyperparameter selection methods for sparse optimization, using both simulated and real-world examples.
All the experiments are performed with Microsoft Windows 11 and Python 3.11, and conducted on a personal computer with Intel (R) Core (TM) i5-13500H CPU at $2.70$ GHz and $16$ GB memory.
The benchmark methods for performance comparisons are the following:
\begin{itemize}
	\item [-] \textbf{Grid Search}: Performing a brute-force exploration over a specified uniform hyperparameter space, evaluating every possible combination of hyperparameters.
	\item [-] \textbf{Random Search}: Randomly sampling $30$ points for elastic-net model and $50$ points for generalized-elastic-net model uniformly across each hyperparameter dimension and evaluate them.
	\item [-] \textbf{TPE}: Bayesian optimization method based on the tree-structured parzen estimator with Gaussian mixture models, see Bergstra et al. \cite[Page 115-123]{B2013}.
	\item [-] \textbf{PGM-BDA}: Proximal gradient method based BDA algorithm. Because the algorithm requires the loss term to be smooth, it was only tested in the elastic-net model and does not appear in the comparative experiments of the generalized-elastic-net model.
\end{itemize}
It is important to highlight that all the comparative methods used the ADMM approach to solve the lower-level problem. Additionally, the TPE method implemented in this study can be accessed at the following website: \url{https://github.com/hyperopt/hyperopt}.

\subsection{Experiments with Synthetic Data}
In this part, we evaluate our proposed method ADMM-BDA on an artificially generated synthetic dataset. The dataset contains $200$ training, $20$ validation, and $100$ test samples, each with a feature dimension of $500$. The data was designed to be sparse, with  $5$ non-zero features per sample. The mapping matrix $A$ is generated from a standard Gaussian distribution and subsequently $\ell_2$-normalized. These non-zero values are generated through exponential scaling $2^v$, where $v$ is the scaling exponent, and the noise level is set as $1e-3$. We perform hyperparameter selection experiments on synthetic datasets with varying levels of noise, using both the elastic-net penalty proposed by Zhou and Hastie \cite{ZH2005} and the generalized-elastic-net penalty considered by Xiao et al. \cite{XSD2024}. The experiments incorporate $\ell_q$-norm loss function $\mathcal{Q}(\cdot)$, where $q = 1, 2, \infty$.

In this experiment, to balance computational accuracy and efficiency, we adopt a two convergence criterion. The first criterion is based on the residual reduction of the objective function (ResErr), while the second evaluates the stability of the iterative variables (RelErr), that is,
$$
\text{ResErr} := \frac{\|Ax^k- b\|_2}{1+\|b\|_2}, \quad \text{and}\quad \text{RelErr} := \frac{\|\lambda^k - \lambda^{k-1}\|_2}{1+\|\lambda^{k-1}\|_2},
$$
where $x^k$ is the recovered solution after $k$'s iteration, $\lambda^k$ is the hyperparameter after $k$ iterations.
In the following test, we stop the algorithms' iterative process when $\min\{\text{ResErr},\text{RelErr}\}\leq 1e-4$.
Furthermore, we fix parameters $\zeta = 5e-10$ and $\eta = 1e-10$ to ensure $\mathcal{S}_y$ and $\mathcal{S}_x$ are positive semi-definite matrices, other parameters' values will be determined adaptively at each test.
We evaluate the algorithm's performance using computing time, validation error, and test error. Each test is independently repeated $20$ times, and all results at each time are all recorded.

\subsubsection{Testing for  Elastic-Net Penalized lower-level Problem}
In this part, we employ the elastic-net penalized model to assess the performance of our proposed ADMM-BDA algorithm.
In this case, the hyperparameter selection problem using the elastic-net penalty can be formulated as the following bilevel programming  problem:
\begin{equation}\label{en}
	\begin{aligned}
		\min_{x\in \mathbb{R}^n,\lambda\in\mathbb{R}^2_+}\quad&\frac{1}{2m}\| \tilde{A}x-\tilde{b}\| _2^2\\
		\text{s.t.}\quad&x\in\arg\min_{x\in\mathbb{R}^n}\left\{\frac{1}{2}\|\hat{A}x-\hat{b}\|^2_2+\lambda_1\|x\|_1+\frac{\lambda_2}{2}\|x\|_2^2\right\},
	\end{aligned}
\end{equation}
where $\lambda=(\lambda_1,\lambda_2)^\top$ is a hyperparameter vector.
The upper-level objective utilizes the mean squared error (MSE) loss, defined as $\mathcal{F}(x, \lambda) = \frac{1}{2m} \| \tilde{A}x - \tilde{b} \|_2^2$, where $\tilde{A}$ and $\tilde{b}$ represent the data matrix and target vector from the validation set, respectively. The lower-level problem, on the other hand, is an elastic-net penalized least squares problem, with hyperparameters $\lambda_1$ and $\lambda_2$, where $\hat{A}$ and $\hat{b}$ are derived from the training set.
For the LL problem, we employ an ADMM for its solution. For this purpose, by introducing an auxiliary variable $y := \hat Ax - \hat b$, we can reformulate it as:
\begin{align*}
	\arg\min_{x\in\mathbb{R}^n} \ \Big\{\frac{1}{2}\|y\|_2^2 + \lambda_1\|x\|_1 + \frac{\lambda_2}{2}\|x\|_2^2 \ | \ \hat Ax - \hat b = y\Big\}.
\end{align*}
Given initial point $\{x^{(0)}_l, {y}_l^{(0)}; {z}_l^{(0)}\}$, we can get $x^{(j+1)}_l$ in Step $1.1$ in ADMM-BDA algorithm:
{\small$$
\left\{\hspace{-0.3cm}
\begin{array}{lll}
	&{y}_l^{(j+1)} = \dfrac{{z}_l^{(j)} + \sigma(\hat Ax^{(j)} - \hat b + \eta {y}_l^{(j)})}{1 + \sigma + \sigma\eta}, \\[4mm]
	&{z}_l^{(j+1)} = {z}_l^{(j)} + \sigma\left(\hat Ax^{(j)} - \hat b - {y}_l^{(j+1)}\right), \\[2mm]
	&x_l^{(j+1)} = \operatorname{Prox}_{\lambda_1(\lambda_2+\sigma\zeta)^{-1}\|\cdot\|_1}\left( \dfrac{\sigma(\hat{A}^{\top}\hat{b} + \hat{A}^{\top}{y}_l^{(j+1)}) }{\lambda_2 + \sigma\zeta}\right.\\[2mm]
	&\qquad\qquad\left.-\dfrac{ \hat{A}^{\top}{z}_l^{(j+1)} - \sigma(\zeta I - \hat{A}^{\top}\hat{A})x^{(j)}}{\lambda_2 + \sigma\zeta} \right).
\end{array}
\right.
$$}
Subsequently, we implement Steps 1.2 and 1.3 of the ADMM-BDA algorithm, along with Step 2 to identify the optimal hyperparameters for the model in   \eqref{en}. The performance of the proposed method is then compared with that of other approaches. The parameters employed in this experiment are set as follows: $\sigma = 1 \times 10^{-4}$, $s = 1$, $\mu = 0.7$, and the step size for updating the hyperparameter $\lambda^{k+1}$ is $\alpha=1 \times 10^{-3}$.

In this test, we evaluate the performance of the ADMM-BDA algorithm in hyperparameter selection through comparative experiments.
The results are presented in Figure \ref{fig}, where we plot the solutions against the ground-truth solutions, as well as the test error versus computing time.
As shown in Subfigure \ref{fig:bmo}, the solution $\bar x$ (blue color) obtained by ADMM-BDA closely matches the ground truth $x^t$ (red color), which confirms the algorithm's clear advantage in solution quality.
As shown in Subfigure \ref{fig:Con}, ADMM-BDA achieves the same test error as other compared methods but with significantly lower computing time. The convergence curve consistently remains in the lower-left region, which shows its superior performance in both convergence speed and computational accuracy.
In conclusion, ADMM-BDA provides higher-quality solutions while also offering faster computational efficiency in hyperparameter selection tasks, which highlights its overall advantages in solving the bilevel problem \eqref{en}.

\begin{figure*}[!t]
	\centering
	\subfloat[The solution (blue) derived by ADMM-BDA versus the ground truth (red).]{
		\includegraphics[width=3.2in]{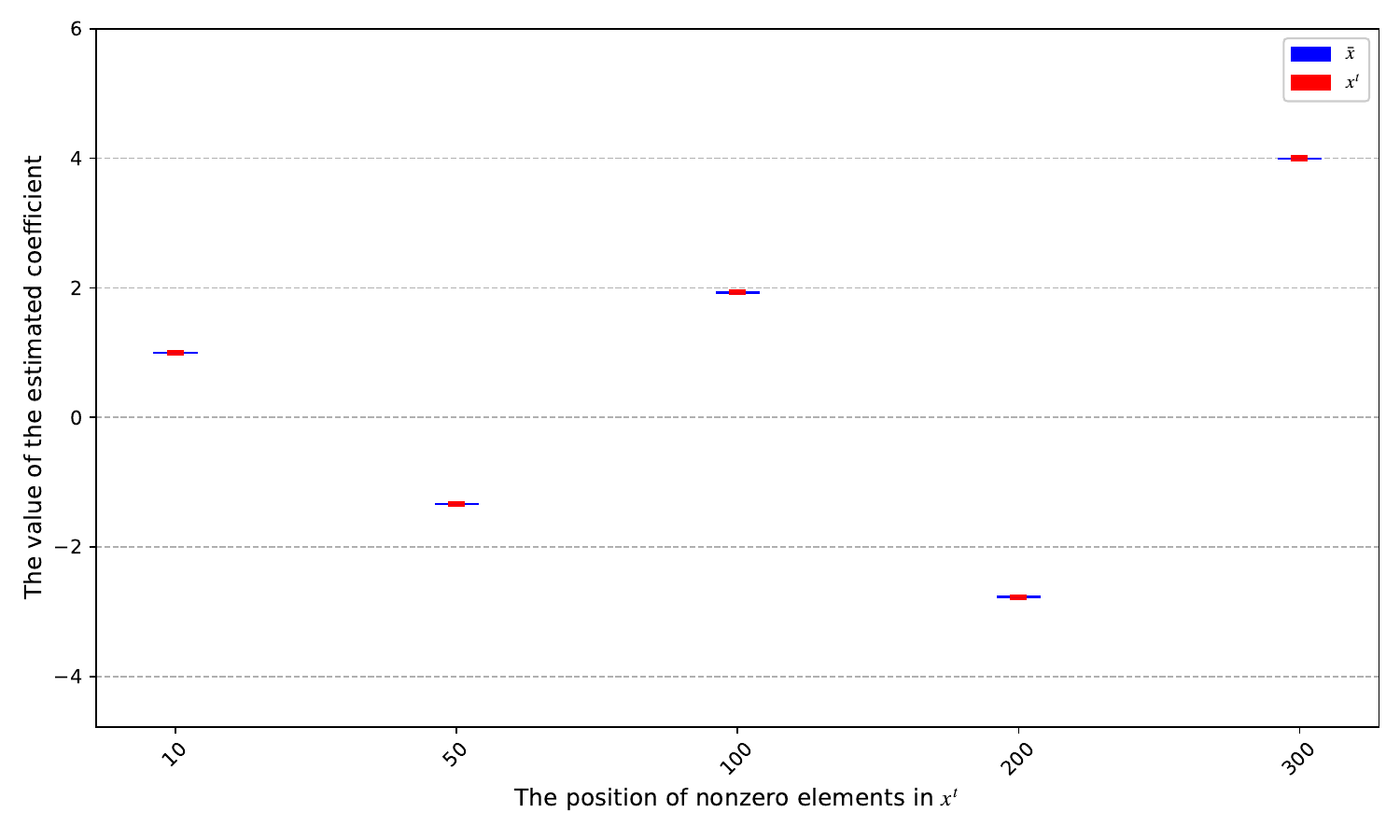}%
		\label{fig:bmo}}
	\hspace{0.5cm}
	\subfloat[Test error versus computing time for ADMM-BDA and comparison methods.]{
		\includegraphics[width=3.2in]{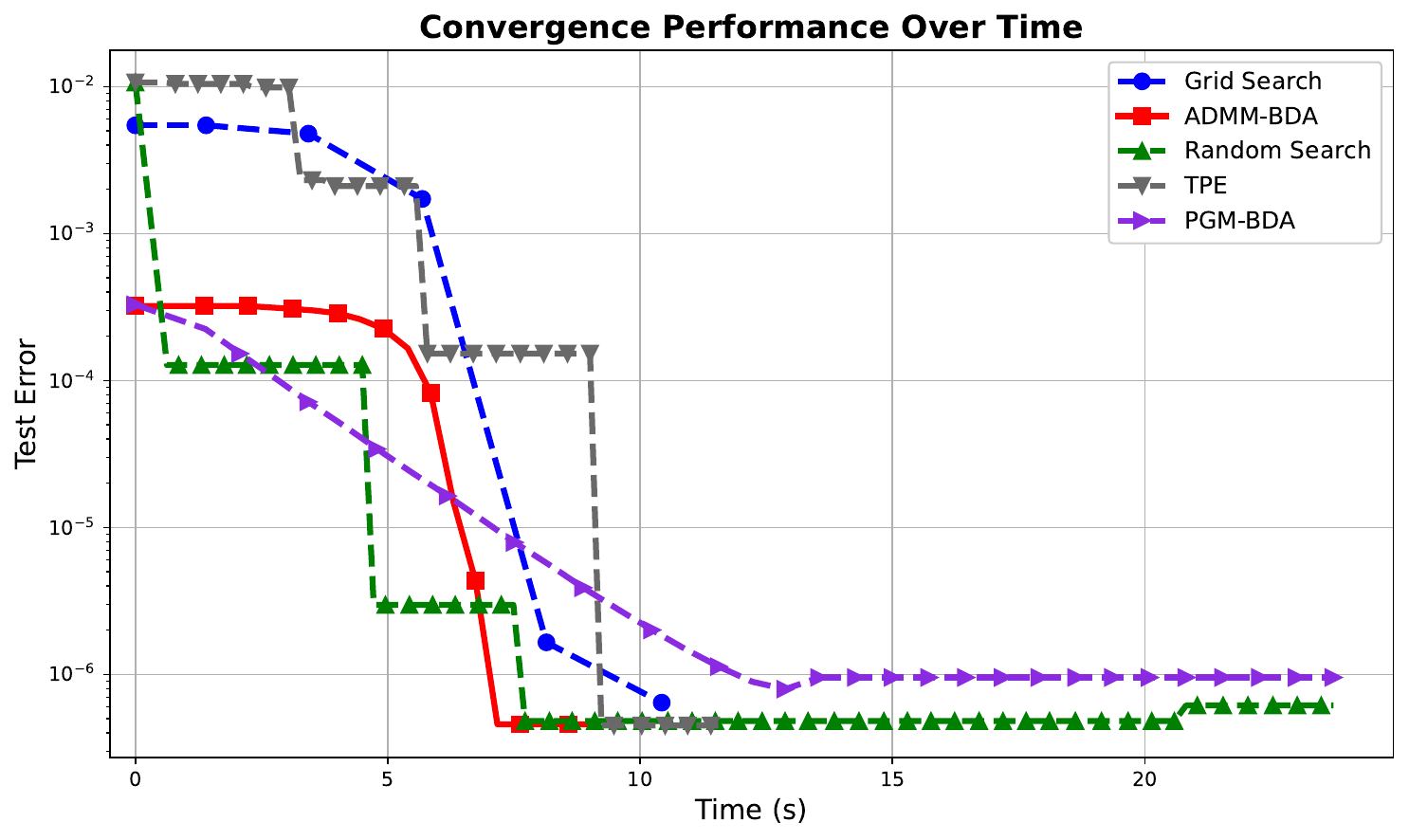}%
		\label{fig:Con}}
	\caption{Numerical performance of algorithms on elastic-net penalized lower-level problem.}
	\label{fig}
\end{figure*}

To provide a clearer comparison of the performance differences among various hyperparameter selection methods,
we report the detailed results in Table \ref{tb1} with regarding to the
computing time in seconds (`Time'), upper-level out-objective function value on the validation (`Val.Err.') and test sets (`Tes.Err.').
In this test, we set $n = 500$, which represents the dimension of the lower-level problem or the number of features.
In the ``Settings" column of Table \ref{tb1}, the values of $I_{tra}$, $I_{val}$, and $I_{tes}$ correspond to the sizes of the training, validation, and test datasets, respectively.
As shown in Table \ref{tb1}, the comparative experiments on the elastic-net penalized model demonstrate that the ADMM-BDA algorithm exhibits significant advantages across the metrics Time, Val.Err, and Tes.Err.
In terms of computational efficiency, ADMM-BDA takes only about $7.8$ seconds, which significantly outperforms the traditional grid search and random search approaches, as well as outperforms both the gradient-based method PGM-BDA and the Bayesian optimization method TPE.
Finally, in terms of solution accuracy, we see that the ADMM-BDA algorithm delivers the best performance on both validation and test errors, with its precision notably surpass that of all other compared methods.
Furthermore, the ADMM-BDA's consistently low variance, which highlights its exceptional stability and robustness.
In summary, from this test, we conclude that ADMM-BDA not only shows distinct advantages in computational efficiency but also consistently outperforms existing methods in terms of solution accuracy and algorithmic stability.

\begin{table*}[!t]
	\renewcommand{\arraystretch}{1.2} 
	\centering
	\caption{Comparative results on synthetic data for the elastic-net penalized lower-level problem.}
	\label{tb1}
	\scalebox{1.4}{
		\begin{tabular}{l|lrrr}
			\hline
			\text{Settings}&\text{Method} & \text{Time}    & \text{Val.Err.}       & \text{Tes.Err.} \\ \hline
			$I_{tra}=200$    &Grid Search     & 20.20 $\pm$ 2.38 & 2.38e-06 $\pm$ 2.95e-06 & 2.10e-06 $\pm$ 1.74e-06 \\
			$I_{val}=20$     &Random Search   & 23.90 $\pm$ 6.41 & 6.00e-07 $\pm$ 5.48e-07 & 7.49e-07 $\pm$ 9.97e-07 \\
			$I_{test}=100$   &TPE             & 9.99 $\pm$ 0.13  & 7.07e-07 $\pm$ 8.55e-07 & 6.86e-07 $\pm$ 6.77e-07 \\
			$n=500$          &PGM-BDA         & 10.20 $\pm$ 1.81 & 9.31e-07 $\pm$ 1.96e-07 & 1.65e-06 $\pm$ 5.68e-07 \\
			&{ADMM-BDA} & \textbf{7.80 $\pm$ 0.16} & \textbf{3.08e-07 $\pm$ 8.68e-08} & \textbf{5.22e-07 $\pm$ 7.91e-08} \\\hline	
		\end{tabular}
	}
\end{table*}

\subsubsection{Testing for Generalized-Elastic-Net Penalized lower-level Problem}
In this subsection, we utilize a generalized-elastic-net penalized model with an $\ell_r$-norm loss function (where $r=1,2,\infty$) in the lower-level problem to assess the performance of the ADMM-BDA algorithm.
In this context, our goal is to derive a sparse solution from the observed data $b \in \mathbb{R}^m$, which is affected by various types of noise. We do this by employing different $q$ values in the loss functions. Specifically, Laplace noise (denoted as `LN') corresponds to $q=1$, Gaussian noise (denoted as `GN') is represented by $q=2$, and uniform noise (denoted as `UN') is associated with $q=\infty$.
The hyperparameter selection problem associated with the generalized elastic-net penalized model can be framed as a bilevel programming problem, as follows:
\begin{equation}\label{gen}
	\begin{aligned}
		\min_{x\in \mathbb{R}^n,\lambda\in \mathbb{R}^2_+}\quad &\frac{1}{2m}\left\| \tilde{A}x-\tilde{b}\right\| _2^2\\
		\text{s.t.}\quad&x\in\arg\min_{x}\left\{\|\hat{A}x-\hat{b}\|_q+\lambda_1\|x\|_1+\frac{\lambda_2}{2}\|x\|_2^2\right\},
	\end{aligned}
\end{equation}
where the notation are consistent with those used in problem \eqref{en}.
Similarly, the iterative framework for obtaining $\{x_l^{(j)}\}$ in Step $1.1$ of ADMM-BDA can be presented as bellow:
{\footnotesize $$
\left\{\hspace{-0.3cm}
\begin{array}{lll}
	&{y}_l^{j+1}=\arg\min_{y}\left\{\|y\|_q + \dfrac{\sigma + \sigma\eta}{2}\left\|y - \dfrac{\sigma (\hat Ax^{(j)}-\hat b) + {z}_l^{(j)} + \sigma\eta {y}_l^{(j)}}{\sigma +\sigma \eta}\right\|_2^2\right\},\\[6mm]
	&{z}_l^{(j+1)} = {z}_l^{(j)} + \sigma\left(\hat Ax^{(j)} - \hat b - {y}_l^{(j+1)}\right), \\[4mm]
	&x_l^{(j+1)} = \operatorname{Prox}_{\lambda_1(\lambda_2+\sigma\zeta)^{-1}\|\cdot\|_1}\left( \dfrac{\sigma(\hat{A}^{\top}\hat{b} + \hat{A}^{\top}{y}_l^{(j+1)}) }{\lambda_2 + \sigma\zeta}\right.\\[4mm]
		&\qquad\qquad\left.-\dfrac{\hat{A}^{\top}{z}_l^{(j+1)} - \sigma(\zeta I - \hat{A}^{\top}\hat{A})x^{(j)}}{\lambda_2 + \sigma\zeta}\right).
\end{array}
\right.
$$}
In the implementation of the iterative scheme in ADMM-BDA for hyperparameter selection in model \eqref{gen}, we set the parameters as $\mu = 0.5$ and $s = 1$. Specifically, we choose $\sigma =  1e-4$ for $q = 1$ or $2$, and $\sigma = 1e-5$ for $q = \infty$.

In this subsection, we conduct a series of comparative experiments to demonstrate the superior performance of ADMM-BDA for hyperparameter selection. The left column of Figure \ref{fig1} presents the sparse solutions obtained by ADMM-BDA compared against the ground truth. Meanwhile, the right column illustrates the performance of each compared algorithm in terms of their computing time.
Firstly, from the figures (a), (c), and (e) in the left column, we see that the ADMM-BDA algorithm exhibits good performance in terms of solution accuracy across three norm loss functions ($r=1, 2, \infty$). Specifically, the sparse solutions (blue color) closely align with the ground truth (red color), which confirms the effectiveness of ADMM-BDA in the context of hyperparameter selection.
Secondly, from figures (b), (d), and (f) in the right column, we see that the curve corresponding to ADMM-BDA consistently positions itself at the bottom in comparison to the other curves, whic indicates that the ADMM-BDA algorithm exhibits better performance in testing error compared to algorithms `Grid Search', `Random Search', `TPE', and `PGM-BDA'.
Finally, upon comparing each row of Figure \ref{fig1}, we see that ADMM-BDA consistently demonstrates superior performance across different types of noise. Specifically, ADMM-BDA achieves the fastest hyperparameter selection while maintaining higher quality sparse solutions in the presence of LN, GN, and UN noise types.

\begin{figure*}[!t]
	\centering
	\subfloat[Sparse solution (blue) derived by ADMM-BDA versus the ground truth (red) under LN.]{
		\includegraphics[width=3.2in]{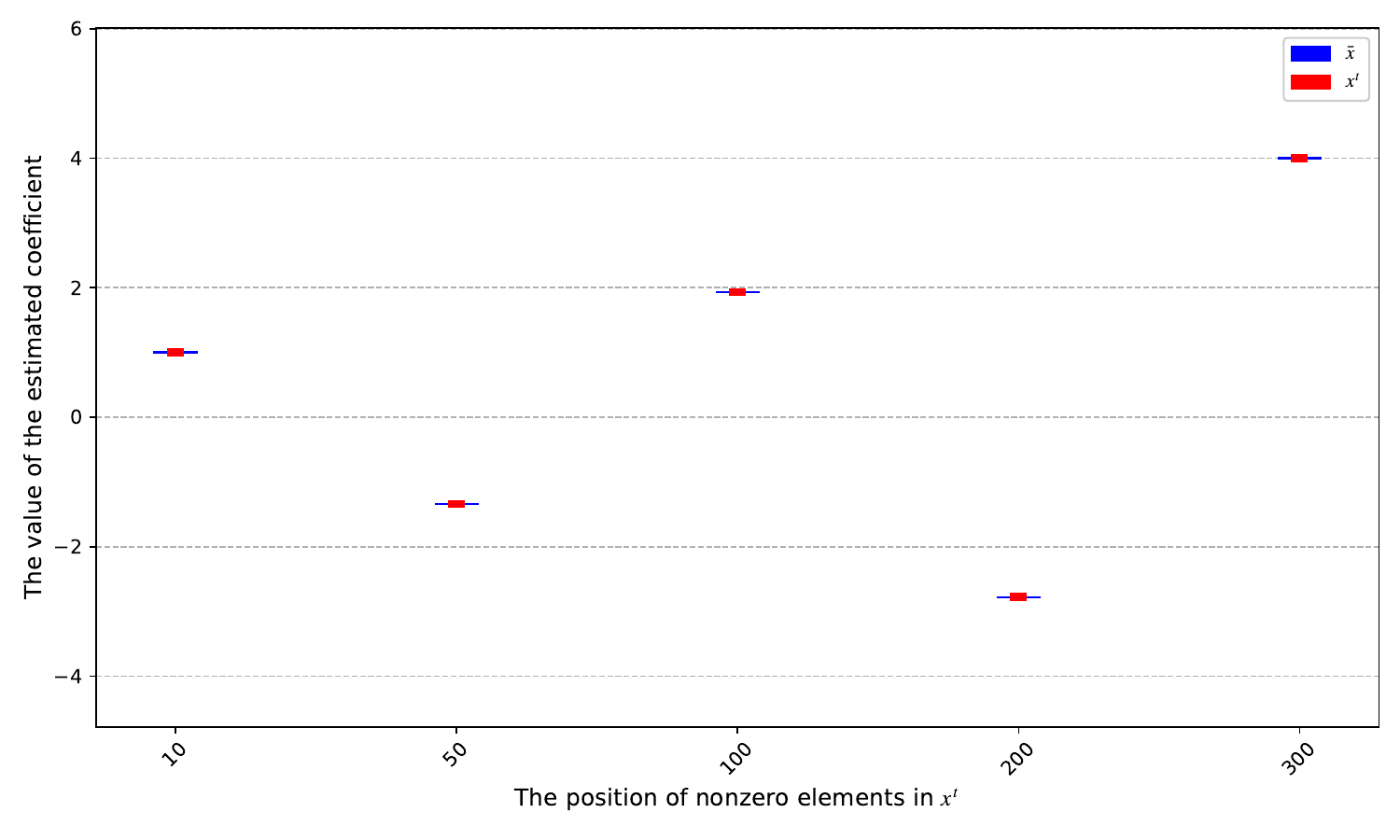}
		\label{fig:bmo1}}
	\hspace{0.5cm}
	\subfloat[Test error versus computing time for ADMM-BDA and comparison methods under LN.]{
		\includegraphics[width=3.2in]{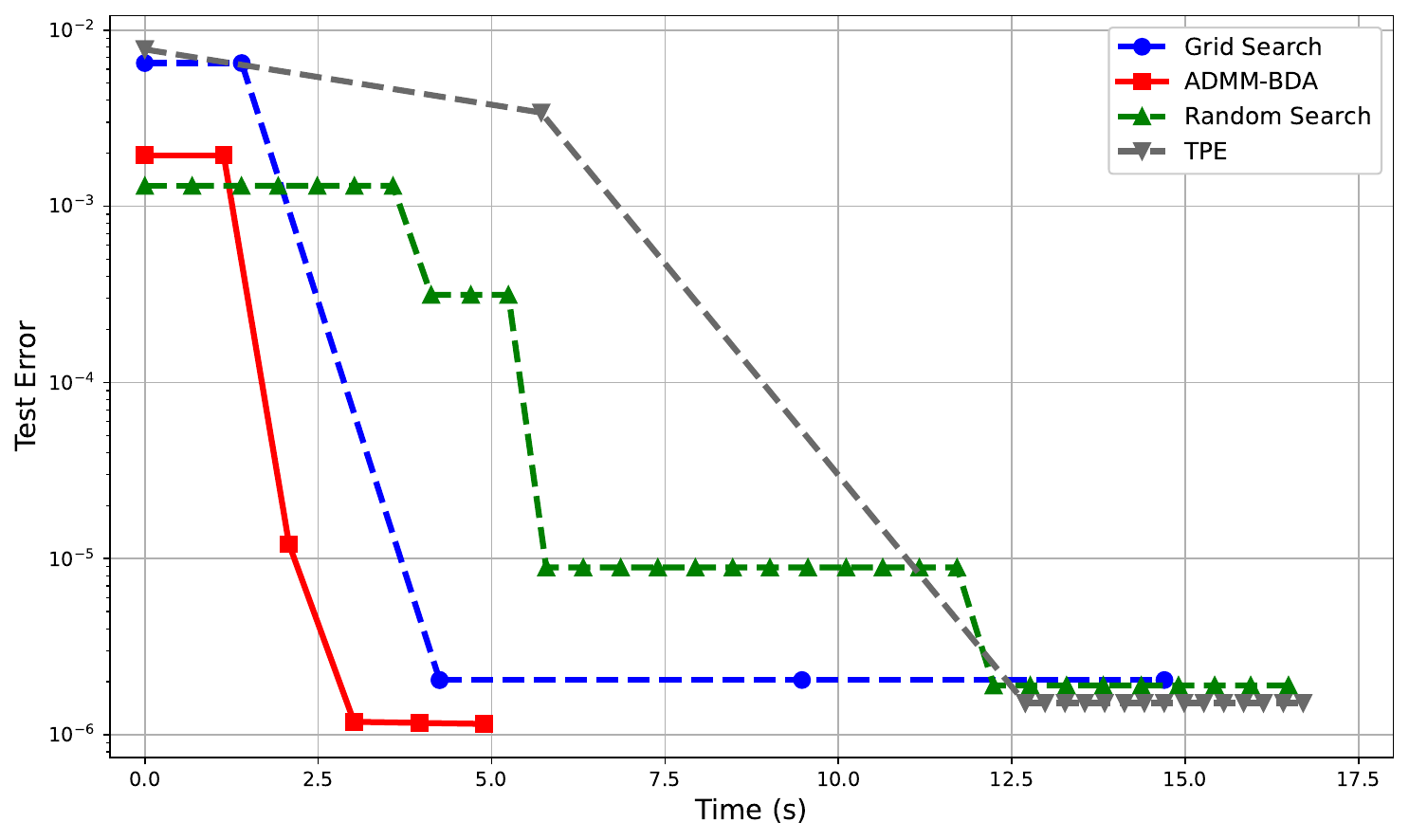}
		\label{fig:con1}}
	
	\vspace{0.3cm}
	
	\subfloat[Sparse solution (blue) derived by ADMM-BDA versus the ground truth (red) under GN.]{
		\includegraphics[width=3.2in]{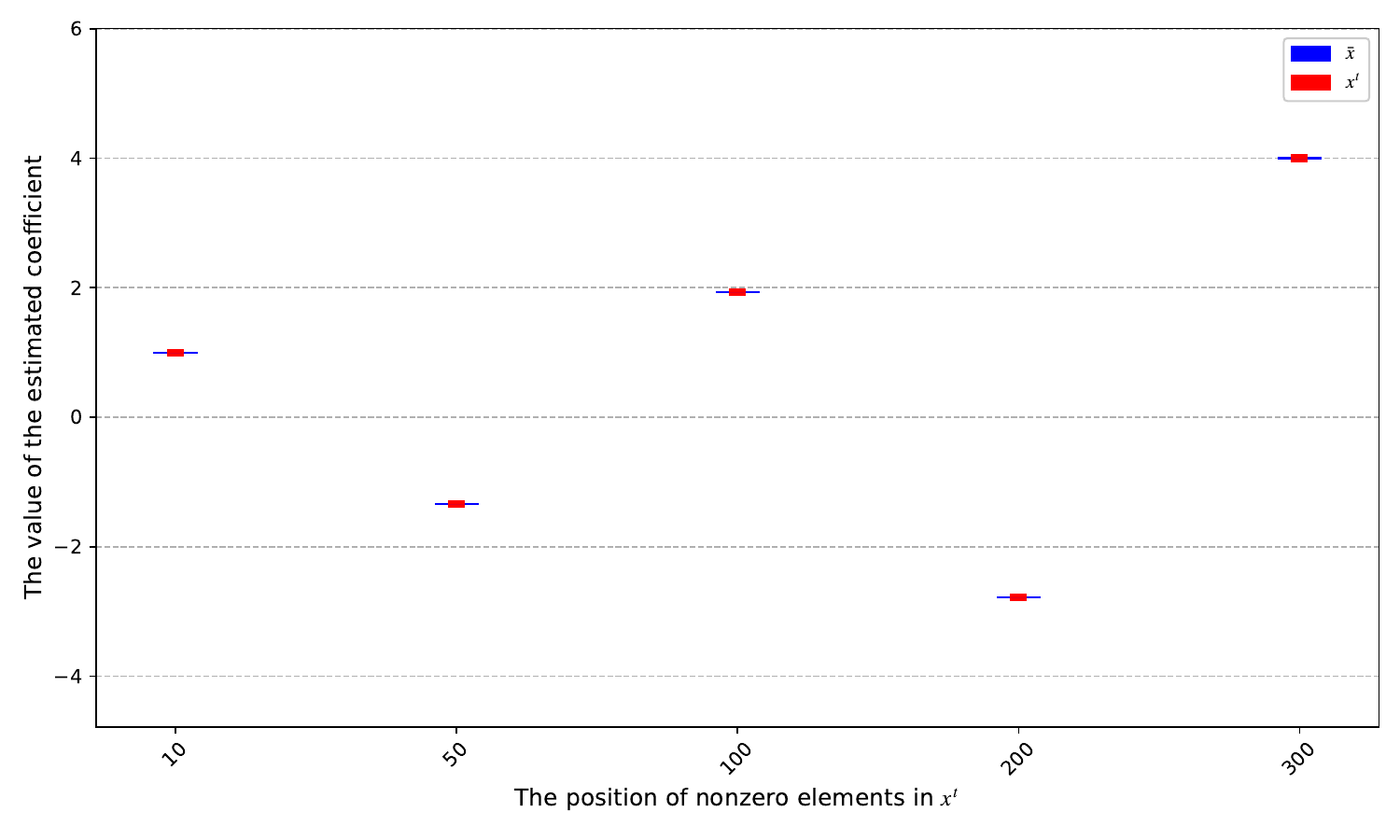}
		\label{fig:bmo2}}
	\hspace{0.5cm}
	\subfloat[Test error versus computing time for ADMM-BDA and comparison methods under GN.]{
		\includegraphics[width=3.2in]{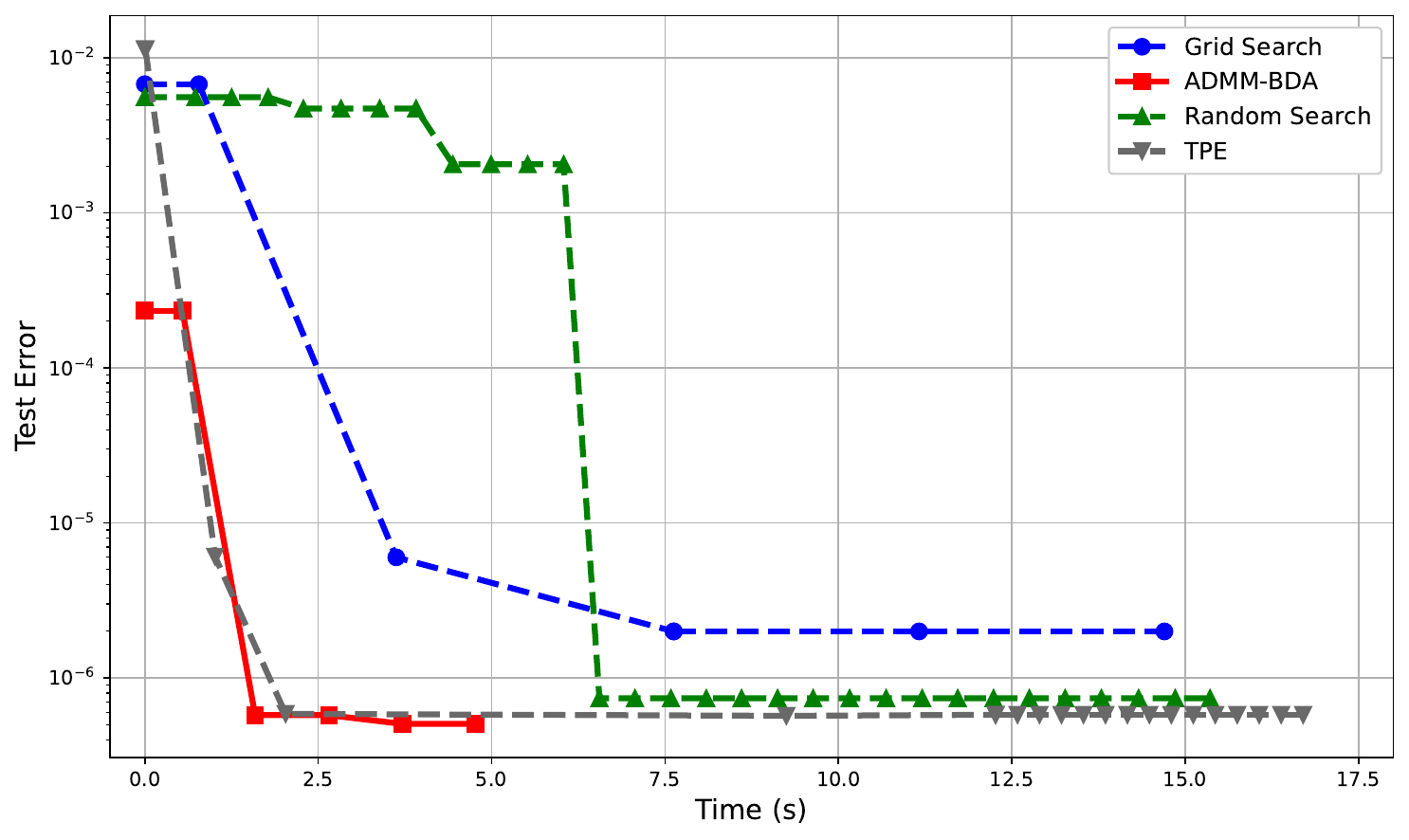}
		\label{fig:con2}}
	
	\vspace{0.3cm}
	
	\subfloat[Sparse solution (blue) derived by ADMM-BDA versus the ground truth (red) under UN.]{
		\includegraphics[width=3.2in]{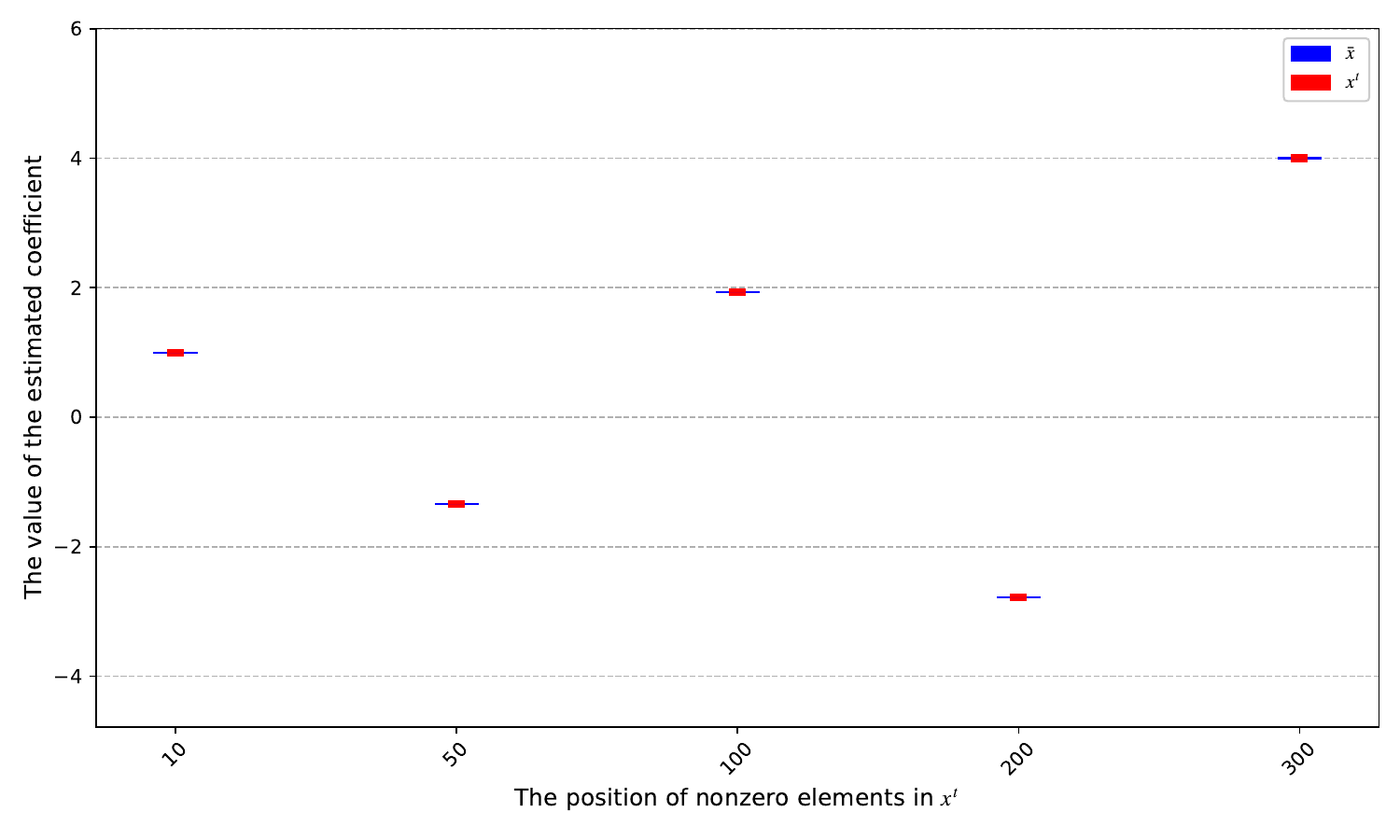}
		\label{fig:bmo3}}
	\hspace{0.5cm}
	\subfloat[Test error versus computing time for ADMM-BDA and comparison methods under UN.]{
		\includegraphics[width=3.2in]{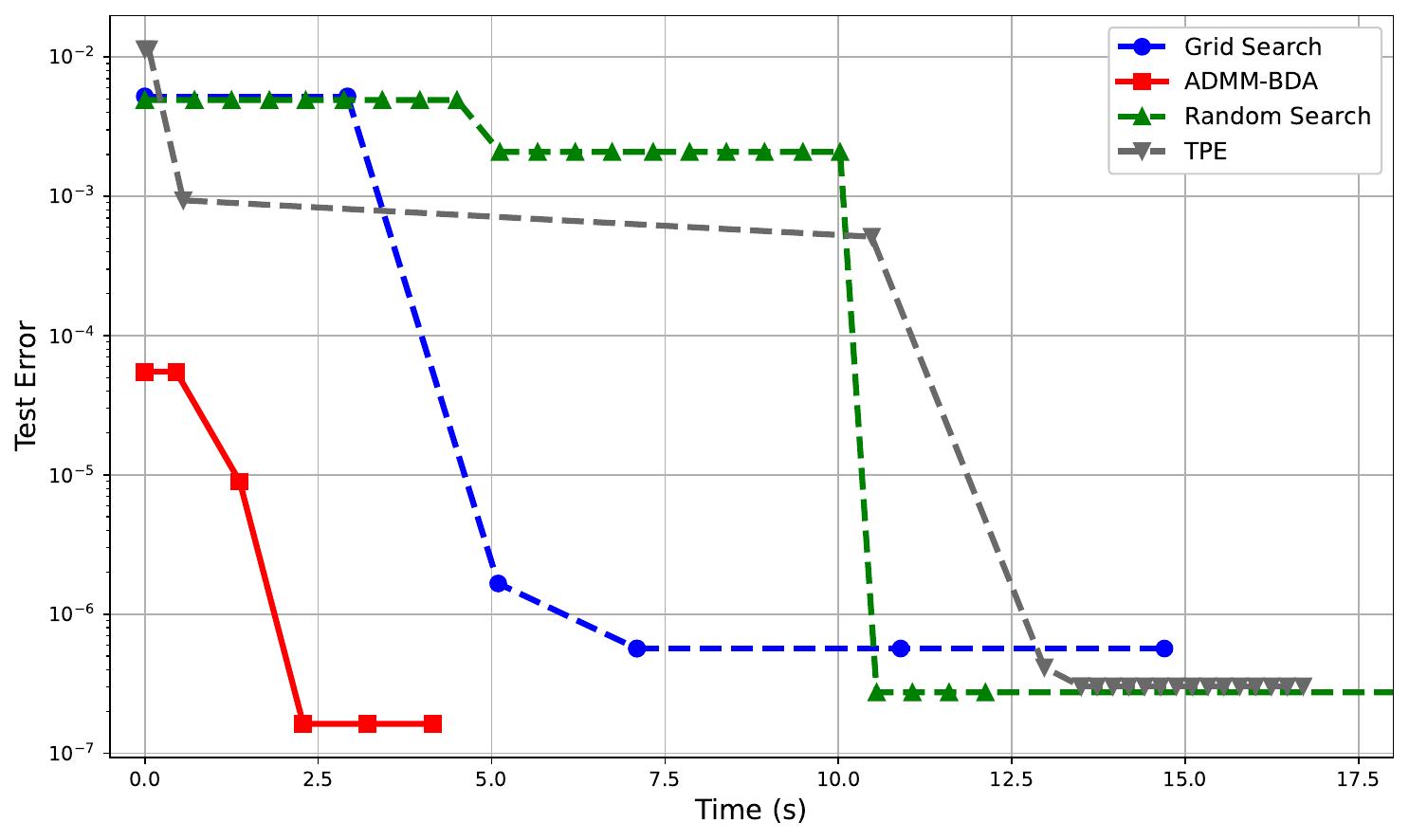}
		\label{fig:con3}}
	
	\caption{Numerical performance of algorithms on generalized-elastic-net penalized lower-level problem across various noise types.}
	\label{fig1}
\end{figure*}
\begin{table*}[!t]	
	\renewcommand{\arraystretch}{1.2} 
	\centering
	\caption{Comparative results of algorithms on synthetic data using the generalized-elastic-net penalized lower-level problem.}
	\label{tb2}
	\scalebox{1.4}{
		\begin{tabular}{llrrr}
			\hline
			\text{Noise} & \text{Method} & \text{Time} & \text{Val.Err.} & \text{Tes.Err.} \\
			\hline
			& Grid Search & 12.33 $\pm$ 0.30 & 1.79e-06 $\pm$ 7.90e-07 & 1.78e-06 $\pm$ 3.79e-07 \\
			LN & Random Search & 17.17 $\pm$ 4.50 & 1.10e-06 $\pm$ 8.28e-07 & 1.27e-06 $\pm$ 8.85e-07 \\
			& TPE & 16.65 $\pm$ 3.69 & 2.05e-05 $\pm$ 8.54e-05 & 2.12e-05 $\pm$ 8.78e-05 \\
			& {ADMM-BDA} & \textbf{5.24 $\pm$ 1.77} & \textbf{3.55e-07 $\pm$ 2.16e-07} & \textbf{1.05e-06 $\pm$ 2.53e-07} \\
			\hline
			& Grid Search & 12.52 $\pm$ 0.53 & 1.79e-06 $\pm$ 1.73e-06 & 1.75e-06 $\pm$ 1.58e-06 \\
			GN & Random Search & 15.70 $\pm$ 2.42 & 1.31e-05 $\pm$ 2.99e-05 & 9.89e-06 $\pm$ 2.21e-05 \\
			& TPE & 18.14 $\pm$ 4.19 & 3.30e-05 $\pm$ 1.37e-04 & 6.60e-05 $\pm$ 2.74e-04 \\
			& {ADMM-BDA} & \textbf{5.39 $\pm$ 1.31} & \textbf{1.79e-07 $\pm$ 5.05e-08} & \textbf{5.05e-07 $\pm$ 6.94e-08} \\
			\hline
			& Grid Search & 14.16 $\pm$ 0.69 & 1.15e-05 $\pm$ 4.95e-05 & 1.71e-05 $\pm$ 7.40e-05 \\
			UN & Random Search & 14.89 $\pm$ 0.41 & 3.02e-07 $\pm$ 5.99e-07 & 2.84e-07 $\pm$ 4.81e-07 \\
			& TPE & 16.21 $\pm$ 5.28 & 1.98e-05 $\pm$ 8.56e-05 & 1.80e-05 $\pm$ 7.75e-05 \\
			& {ADMM-BDA} & \textbf{5.10 $\pm$ 2.84} & \textbf{6.33e-08 $\pm$ 1.32e-08} & \textbf{1.83e-07 $\pm$ 2.37e-08} \\
			\hline
		\end{tabular}
	}
\end{table*}

In order to deliver a more in-depth comparison of algorithm performance under different types of noise, we present the detailed results in Table \ref{tb2}.
This table presents the metrics concerning `Time', `Val.Err.', and `Tes.Err.'.
From this table, we can see that the proposed algorithm ADMM-BDA demonstrates clear advantages across three typical noise scenarios when evaluating the metrics of `Time', `Val.Err.', and `Tes.Err.'.
On the one hand, from the `Time' column, we observe that ADMM-BDA exhibits superior runtime performance, being approximately two to three times faster than its competitors in deriving sparse solutions with comparable accuracy.
On the other hand, from the `Val.Err' and `Tes.Err' columns, we notice that ADMM-BDA exhibits significant superiority in both validation error and test error metrics, as the accuracy has nearly improved by an order of magnitude.
In conclusion, these results show that ADMM-BDA effectively balances computational efficiency and solution accuracy in solving the hyperparameter selection problem, which thereby offers a reliable and efficient approach for sparse optimization under different noise types.

\subsection{  Experiments with Real-World Data}
In this section, we further evaluate the practical effectiveness of ADMM-BDA by using the real-world Bodyfat dataset sourced from the LIBSVM repository \cite{C2011}. It is important to note that the dataset contains $252$ samples, resulting in a feature dimensionality of $680$ after applying third-degree polynomial feature expansion.
In this experiment, we randomly split the data into three segments: a training set, a validation set, and a test set. We then conduct the experiment independently $20$ times with different random partitions to ensure statistical reliability. The detailed results related to `Time', `Val.Err.', and `Tes.Err.' are recorded in Tables \ref{tb3} and \ref{tb4}.
Specifically, Table \ref{tb3} provides a detailed results for each algorithm employed to address the bilevel problem \eqref{en} where an elastic-net penalized least-squares is used in the lower-level problem. Meanwhile, Table \ref{tb4} presents the detailed results from the generalized-elastic-net penalized lower-level problem \eqref{gen}, which incorporates $\ell_r$-norm loss functions to deal with different types of noises.

\begin{table*}[!t]
	\renewcommand{\arraystretch}{1.2} 
	\centering
	\caption{\scriptsize Comparative results on real-world data using elastic-net penalized lower-level problem.}
	\label{tb3}
	\scalebox{1.4}{
	\begin{tabular}{lrrr}
		\hline
		\textbf{Method} & \textbf{Time} & \textbf{Val. Err.} & \textbf{Tes.Err.} \\ \hline
		Grid Search     & 28.10s$\pm$7.3   & 1.90e-04$\pm$ 2.74e-05  & 1.84e-04$\pm$4.61e-05  \\
		Random Search   & 12.59s$\pm$0.47& 1.94e-04$\pm$4.13e-05   & 1.76e-04$\pm$5.49e-05   \\
		TPE       & 13.92s$\pm$2.44    & 1.89e-04$\pm$3.42e-05 & 1.91e-04$\pm$3.65e-05    \\
		PGM-BDA   & 12.50s$\pm$4.33  & 1.52e-04$\pm$6.58e-05 & 2.40e-04$\pm$1.35e-04    \\
		{ADMM-BDA} & \textbf{9.15s$\pm$1.67} & \textbf{1.24e-04$\pm$2.36e-05} & \textbf{1.21e-04$\pm$4.43e-05}  \\ \hline
	\end{tabular}
}
\end{table*}

In the first test, we  use the following parameters' values $\sigma=1e-5$, $s=1e-2$, and $\mu=0.9$ and list the detailed results in Table \ref{tb3}.
We can see from Table \ref{tb3} that, the ADMM-BDA algorithm demonstrates clear advantages in hyperparameter selection problem with using the elastic-net penalized lower-level problem. In terms of the computing time, the ADMM-BDA method requires around $9.15$ seconds, which is nearly $1.5$ times faster than all the other methods evaluated.
In terms of the solution accuracy, the Val.Err and Tes.Err values generated by ADMM-BDA are consistently the smallest, which indicates that ADMM-BDA yields the highest quality sparse solutions among all the methods tested.
In summary, this test demonstrates that our proposed ADMM-BDA method is the most effective solution for the hyperparameter selection problem when an elastic-net penalty is applied in the lower-level problem.

\begin{table*}[!t]
	\renewcommand{\arraystretch}{1.2} 
	\centering
	\caption{\scriptsize Comparative results on real-world data using generalized-elastic-net penalized LL problem under different noise types.}
	\label{tb4}
	\scalebox{1.4}{
	\begin{tabular}{llrrr}
		\hline
		\textbf{Noise} & \textbf{Method} & \textbf{Time} & \textbf{Val.Err.} & \textbf{Tes.Err.} \\
		\hline
		& Grid Search & 14.78 $\pm$ 3.15 & 1.99e-04 $\pm$ 3.24e-05 & 1.91e-04 $\pm$ 5.82e-05 \\
		LN & Random Search & 25.44 $\pm$ 10.50 & 1.77e-04 $\pm$ 4.82e-05 & 1.54e-04 $\pm$ 7.54e-05 \\
		& TPE & 20.02 $\pm$ 2.70 & 2.07e-04 $\pm$ 2.17e-04 & 1.58e-04 $\pm$ 7.03e-05 \\
		& {ADMM-BDA} & \textbf{5.81 $\pm$ 3.96} & \textbf{1.51e-04 $\pm$ 9.21e-05} & \textbf{1.26e-04 $\pm$ 5.65e-05} \\
		\hline
		& Grid Search & 14.46 $\pm$ 5.18 & 2.36e-04 $\pm$ 4.53e-04 & 1.13e-04 $\pm$ 3.55e-05 \\
		GN & Random Search & 17.14 $\pm$ 5.88 & 1.77e-04 $\pm$ 1.39e-04 & 1.34e-04 $\pm$ 1.37e-04 \\
		& TPE & 17.20 $\pm$ 7.19 & 1.65e-04 $\pm$ 4.31e-05 & 1.77e-04 $\pm$ 8.55e-05 \\
		& {ADMM-BDA} & \textbf{4.93 $\pm$ 0.22} & \textbf{8.48e-05 $\pm$ 2.19e-05} & \textbf{6.76e-05 $\pm$ 2.78e-05} \\
		\hline
		& Grid Search & 59.73 $\pm$ 6.85 & 1.29e-04 $\pm$ 6.57e-05 & 1.15e-04 $\pm$ 4.88e-05 \\
		UN & Random Search & 62.96 $\pm$ 9.96 & 1.54e-04 $\pm$ 4.79e-05 & 1.38e-04 $\pm$ 5.25e-05 \\
		& TPE & 74.88 $\pm$ 5.34 & 1.94e-04 $\pm$ 5.42e-05 & 1.89e-04 $\pm$ 6.83e-05 \\
		& {ADMM-BDA} & \textbf{5.42 $\pm$ 3.34} & \textbf{1.10e-04 $\pm$ 5.88e-05} & \textbf{1.03e-04 $\pm$ 3.41e-05} \\
		\hline
	\end{tabular}
}
\end{table*}

In the second test, we set the following parameter values: $\mu = 0.9$ and $s = 1e-4$ and report the detailed results in Table \ref{tb4}. Additionally, for $q = 1$ or $q = 2$, we select $\sigma = 1e-6$, while for $q = \infty$, we opt for $\sigma = 1e-7$.
From the `Time' column in Table \ref{tb4}, we observe that ADMM-BDA requires about $5$ seconds for each test case, making it at least $4$ times, and in some instances up to $12$ times faster than its competitors.
From the last two columns in Table \ref{tb4}, we can see that the values for `Val.Err.' and `Tes.Err.' obtained by each algorithm are comparable to one another.
Especially in the case of UN, we see that the ADMM-BDA is significant faster than other algorithms where it only requires only $5$ seconds to derive higher-quality sparse solutions.
In summary, this test once again highlights the high computational efficiency and good precision advantages of the method in addressing the hyperparameter selection problem using real-world datasets.

\section{Conclusion}\label{con}
In this paper, we addressed the challenge of hyperparameter selection by focusing on a specific class of separable convex non-smooth bilevel optimization problems that do not possess the lower-level singleton property. We proposed an innovative bilevel optimization framework that combines the ADMM and BDA. This framework takes the benefits of both methods: ADMM efficiently tackles the non-smooth lower-level problem, while BDA facilitates the updates of both upper-level and lower-level variables in a coordinated manner.
Theoretically, we eliminated the lower-level singleton assumption and established a convergence analysis framework, proving that our algorithm converges to a solution of \eqref{model0} while approaching the optimal upper-level objective value. We conducted a series of numerical experiments using both synthetic and real-world datasets, demonstrating that our proposed method significantly outperforms existing approaches for bilevel programming problems, particularly those involving elastic-net and generalized-elastic-net penalized lower-level problems. Importantly, this paper not only provided a theoretical analysis for bilevel optimization that extended beyond the lower-level singleton assumption but also presented an effective solution for hyperparameter selection in sparse optimization.

\section*{Conflict of interest statement}

The authors declare that they have no conflict of interest.


%
%

\bibliographystyle{plain}
\bibliography{hcfs1}

@article{CT2005,
  title={Decoding by linear programming},
  author={Candes, Emmanuel J and Tao, Terence},
  journal={IEEE Transactions on Information Theory},
  volume={51},
  number={12},
  pages={4203--4215},
  year={2005},
  publisher={IEEE}
}

@article{ANW2012,
  title={Stochastic optimization and sparse statistical recovery: Optimal algorithms for high dimensions},
  author={Agarwal, Alekh and Negahban, Sahand and Wainwright, Martin J},
  journal={Advances in Neural Information Processing Systems},
  volume={25},
  year={2012}
}

@misc{LHL2020,
  title={Optimization: modeling, algorithm and theory},
  author={Liu, H and Hu, J and Li, Y and Wen, Z},
  year={2020},
  publisher={Higher Education Press Beijing}
}

@article{CF2022,
  title={Bilevel methods for image reconstruction},
  author={Crockett, Caroline and Fessler, Jeffrey A and others},
  journal={Foundations and Trends{\textregistered} in Signal Processing},
  volume={15},
  number={2-3},
  pages={121--289},
  year={2022},
  publisher={Now Publishers, Inc.}
}

@article{BPG2007,
   title={An overview of bilevel optimization},
   author={Colson, Beno{\^\i}t and Marcotte, Patrice and Savard, Gilles},
   journal={Annals of Operations Research},
   volume={153},
   number={1},
   pages={235--256},
   year={2007},
   publisher={Springer}
   }

@inproceedings{LLZ2022,
  title={Optimization-derived learning with essential convergence analysis of training and hyper-training},
  author={Liu, Risheng and Liu, Xuan and Zeng, Shangzhi and Zhang, Jin and Zhang, Yixuan},
  booktitle={International Conference on Machine Learning},
  pages={13825--13856},
  year={2022}
}

@inproceedings{MBB2009,
  title={Nonsmooth bilevel programming for hyperparameter selection},
  author={Moore, Gregory M and Bergeron, Charles and Bennett, Kristin P},
  booktitle={2009 IEEE International Conference on Data Mining Workshops},
  pages={374--381},
  year={2009}
}

@inproceedings{BHJ2006,
  title={Model selection via bilevel optimization},
  author={Bennett, Kristin P and Hu, Jing and Ji, Xiaoyun and Kunapuli, Gautam and Pang, Jong-Shi},
  booktitle={The 2006 IEEE International Joint Conference on Neural Network Proceedings},
  pages={1922--1929},
  year={2006}
}

@article{OTK2021,
  title={On lp-hyperparameter learning via bilevel nonsmooth optimization},
  author={Okuno, Takayuki and Takeda, Akiko and Kawana, Akihiro and Watanabe, Motokazu},
  journal={Journal of Machine Learning Research},
  volume={22},
  number={245},
  pages={1--47},
  year={2021}
}

@inproceedings{GYY2022,
  title={Value function based difference-of-convex algorithm for bilevel hyperparameter selection problems},
  author={Gao, Lucy L and Ye, Jane and Yin, Haian and Zeng, Shangzhi and Zhang, Jin},
  booktitle={International Conference on Machine Learning},
  pages={7164--7182},
  year={2022}
}

@article{KP2013,
  title={A bilevel optimization approach for parameter learning in variational models},
  author={Kunisch, Karl and Pock, Thomas},
  journal={SIAM Journal on Imaging Sciences},
  volume={6},
  number={2},
  pages={938--983},
  year={2013},
  publisher={SIAM}
}

@article{KBH2008,
  title={Classification model selection via bilevel programming},
  author={Kunapuli, Gautam and Bennett, Kristin P and Hu, Jing and Pang, Jong-Shi},
  journal={Optimization Methods \& Software},
  volume={23},
  number={4},
  pages={475--489},
  year={2008},
  publisher={Taylor \& Francis}
}

@article{WL2024,
  title={A fast smoothing newton method for bilevel hyperparameter optimization for SVC with logistic loss},
  author={Wang, Yixin and Li, Qingna},
  journal={Optimization},
  pages={1--30},
  year={2024},
  publisher={Taylor \& Francis}
}

@article{SMK2024,
  title={A Brief Review of Bilevel Optimization Techniques and Their Applications},
  author={Sapre, Mandar S and Kale, Ishaan R},
  journal={Handbook of Formal Optimization},
  pages={1179--1202},
  year={2024},
  publisher={Springer}
}

@inproceedings{P2016,
  title={Hyperparameter optimization with approximate gradient},
  author={Pedregosa, Fabian},
  booktitle={International Conference on Machine Learning},
  pages={737--746},
  year={2016}
}

@article{RFS2019,
  title={Meta-learning with implicit gradients},
  author={Rajeswaran, Aravind and Finn, Chelsea and Kakade, Sham M and Levine, Sergey},
  journal={Advances in Neural Information Processing Systems},
  volume={32},
  year={2019}
}

@inproceedings{BKB2020,
  title={Implicit differentiation of lasso-type models for hyperparameter optimization},
  author={Bertrand, Quentin and Klopfenstein, Quentin and Blondel, Mathieu and Vaiter, Samuel and Gramfort, Alexandre and Salmon, Joseph},
  booktitle={International Conference on Machine Learning},
  pages={810--821},
  year={2020}
}

@inproceedings{FDF2017,
  title={Forward and reverse gradient-based hyperparameter optimization},
  author={Franceschi, Luca and Donini, Michele and Frasconi, Paolo and Pontil, Massimiliano},
  booktitle={International Conference on Machine Learning},
  pages={1165--1173},
  year={2017}
}

@inproceedings{FFS2018,
  title={Bilevel programming for hyperparameter optimization and meta-learning},
  author={Franceschi, Luca and Frasconi, Paolo and Salzo, Saverio and Grazzi, Riccardo and Pontil, Massimiliano},
  booktitle={International Conference on Machine Learning},
  pages={1568--1577},
  year={2018}
}

@inproceedings{SCC2019,
  title={Truncated back-propagation for bilevel optimization},
  author={Shaban, Amirreza and Cheng, Ching-An and Hatch, Nathan and Boots, Byron},
  booktitle={The 22nd international conference on artificial intelligence and statistics},
  pages={1723--1732},
  year={2019}
}

@article{FS2018,
  title={Gradient-based regularization parameter selection for problems with nonsmooth penalty functions},
  author={Feng, Jean and Simon, Noah},
  journal={Journal of Computational and Graphical Statistics},
  volume={27},
  number={2},
  pages={426--435},
  year={2018},
  publisher={Taylor \& Francis}
}

@article{BKM2022,
  title={Implicit differentiation for fast hyperparameter selection in non-smooth convex learning},
  author={Bertrand, Quentin and Klopfenstein, Quentin and Massias, Mathurin and Blondel, Mathieu and Vaiter, Samuel and Gramfort, Alexandre and Salmon, Joseph},
  journal={Journal of Machine Learning Research},
  volume={23},
  number={149},
  pages={1--43},
  year={2022}
}

@article{BT2009,
  title={A fast iterative shrinkage-thresholding algorithm for linear inverse problems},
  author={Beck, Amir and Teboulle, Marc},
  journal={SIAM Journal on Imaging Sciences},
  volume={2},
  number={1},
  pages={183--202},
  year={2009},
  publisher={SIAM}
}

@article{ZH2005,
  title={Regularization and variable selection via the elastic net},
  author={Zou, Hui and Hastie, Trevor},
  journal={Journal of the Royal Statistical Society Series B: Statistical Methodology},
  volume={67},
  number={2},
  pages={301--320},
  year={2005},
  publisher={Oxford University Press}
}

@inproceedings{LMY2020,
  title={A generic first-order algorithmic framework for bi-level programming beyond lower-level singleton},
  author={Liu, Risheng and Mu, Pan and Yuan, Xiaoming and Zeng, Shangzhi and Zhang, Jin},
  booktitle={International Conference on Machine Learning},
  pages={6305--6315},
  year={2020}
}

@book{R2015,
  title={Convex analysis:(pms-28)},
  author={Rockafellar, Ralph Tyrell},
  year={2015},
  publisher={Princeton university press}
}

@article{LST2022,
  title={An augmented Lagrangian method with constraint generation for shape-constrained convex regression problems},
  author={Lin, Meixia and Sun, Defeng and Toh, Kim-Chuan},
  journal={Mathematical Programming Computation},
  volume={14},
  number={2},
  pages={223--270},
  year={2022},
  publisher={Springer}
}

@article{ZLZ2021,
  title={Plug-and-play image restoration with deep denoiser prior},
  author={Zhang, Kai and Li, Yawei and Zuo, Wangmeng and Zhang, Lei and Van Gool, Luc and Timofte, Radu},
  journal={IEEE Transactions on Pattern Analysis and Machine Intelligence},
  volume={44},
  number={10},
  pages={6360--6376},
  year={2021},
  publisher={IEEE}
}

@book{BC2011,
	author = {Bauschke, Heinz and Combettes, Patrick},
	year = {2011},
	month = {01},
	pages = {468},
	title = {Convex Analysis and Monotone Operator Theory in Hilbert Space},
	isbn = {978-1-4419-9466-0},
	publisher = {Springer} 
}

@article{C2005,
	title={Proximal point algorithm controlled by a slowly vanishing term: applications to hierarchical minimization},
	author={Cabot, Alexandre},
	journal={SIAM Journal on Optimization},
	volume={15},
	number={2},
	pages={555--572},
	year={2005},
	publisher={SIAM}
}

@inproceedings{B2013,
title={Making a science of model search: Hyperparameter optimization in hundreds of dimensions for vision architectures},
author={Bergstra, James and Yamins, Daniel and Cox, David},
booktitle={International Conference on Machine Learning},
pages={115--123},
year={2013}
}

@article{C2011,
	title={LIBSVM: A library for support vector machines},
	author={Chang, Chih-Chung and Lin, Chih-Jen},
	journal={ACM Transactions on Intelligent Systems and Technology},
	volume={2},
	number={3},
	pages={1--27},
	year={2011},
	publisher={Acm New York, NY, USA} 
}

@article{DZL2023,
  title={An efficient semismooth Newton method for adaptive sparse signal recovery problems},
  author={Ding, Yanyun and Zhang, Haibin and Li, Peili and Xiao, Yunhai},
  journal={Optimization Methods and Software},
  volume={38},
  number={2},
  pages={262--288},
  year={2023},
  publisher={Taylor \& Francis}
}

@article{XSD2024,
  author = {Xiao, Yyunhai and Shen, Jian and Ding, Yanyun and Shi, Mengjiao and Li, Peili},
  title = {A fast and effective algorithm for sparse linear regression with $\ell_p$-norm data fidelity and elastic net regularization},
  journal = {Journal of Nonlinear and Variational Analysis},
  volume = {8},
  pages = {433-449},
  year = {2024}
}

\vfill

\end{document}